\theoremstyle{plain}
\newtheorem{theorem}{Theorem}[section]
\newtheorem{lemma}[theorem]{Lemma}
\newtheorem{proposition}[theorem]{Proposition}
\newtheorem{corollary}[theorem]{Corollary}
\theoremstyle{definition}
\newtheorem{definition}[theorem]{Definition}
\newtheorem{remark}[theorem]{Remark}
\newtheorem{problem}[theorem]{Problem}
\newtheorem{example}[theorem]{Example}
\numberwithin{equation}{section}
\newcommand\N{\mathbb{N}}
\newcommand\R{\mathbb{R}}
\newcommand\C{\mathbb{C}}
\newcommand\ev[2]{\langle#1,#2\rangle}
\newcommand\SES[5]{\begin{tikzcd}[ampersand replacement=\&] 0 \arrow{r} \& #1 \arrow{r}{#4} \& #2 \arrow{r}{#5} \& #3 \arrow{r} \& 0\end{tikzcd}}
\DeclareMathOperator\im{im}
\DeclareMathOperator\id{id}
\DeclareMathOperator\condS{S}
\DeclareMathOperator\condWS{WS}
\DeclareMathOperator\condQ{Q}
\DeclareMathOperator\condA{A}
\DeclareMathOperator\conduA{\underline{A}}
\DeclareMathOperator\DN{DN}
\DeclareMathOperator\uDN{\underline{DN}}
\DeclareMathOperator\DOmega{D\Omega}
\DeclareMathOperator\Proj{Proj}
\DeclareMathOperator\Ind{Ind}
\DeclareMathOperator\Ext{Ext}
\begin{document}

\title[An extension result for $(LB)$-spaces]{An extension result for $(LB)$-spaces and the surjectivity of tensorized mappings}

\author[A. Debrouwere]{Andreas Debrouwere}
\address{Department of Mathematics and Data Science \\ Vrije Universiteit Brussel, Belgium\\ Pleinlaan 2 \\ 1050 Brussels \\ Belgium}
\email{andreas.debrouwere@vub.be}

\author[L. Neyt]{Lenny Neyt}
\thanks{L. Neyt gratefully acknowledges support by the Alexander von Humboldt Foundation and by FWO-Vlaanderen through the postdoctoral grant 12ZG921N }
\address{Universit\"{a}t Trier\\ FB IV Mathematik\\ D-54286 Trier\\ Germany}
\email{lenny.neyt@UGent.be}

\subjclass[2020]{46M18, 46A22, 46A32, 46A13, 46A63}
\keywords{Extension of continuous linear mappings; surjectivity of tensorized mappings; $(LB)$-spaces; Homological algebra methods in functional analysis; Eidelheit families}

\begin{abstract}
We study an extension problem for continuous linear maps in the setting of $(LB)$-spaces. More precisely, we characterize the pairs $(E,Z)$, where $E$ is a locally complete space with a fundamental sequence of bounded sets and $Z$ is an $(LB)$-space, such that for every exact sequence of $(LB)$-spaces
$$
				 \SES{X}{Y}{Z}{\iota}{} 
				 $$
the map $$
 L(Y,E) \to L(X, E), ~ T \mapsto T \circ \iota
$$
is  surjective, meaning that each continuous linear map $X \to E$ can be extended to a continuous linear map $Y \to E$ via $\iota$, under some mild conditions on $E$ or $Z$ (e.g. one of them is nuclear).				
We use our extension result to obtain sufficient conditions for the surjectivity of tensorized maps between Fr\'{e}chet-Schwartz spaces. As an application of the latter, we study vector-valued Eidelheit type problems. Our work is inspired by and extends results of Vogt \cite{V-TensorFundDFRaumFortsetz}. 
\end{abstract}

\maketitle  

\section{Introduction}
Problems concerning the extension of continuous linear maps between locally convex spaces are a well-studied topic in functional analysis. In this article, we study an extension problem for continuous linear maps in the context of $(LB)$-spaces that was treated by Vogt in the unpublished manuscript \cite{V-TensorFundDFRaumFortsetz}  (see also the work of Petzsche \cite{Petzsche}). We extend his results in several directions. The main motivation for our work stems from the fact that it may be used to give sufficient conditions ensuring the surjectivity of tensorized maps between Fr\'{e}chet-Schwartz spaces, as we now proceed to explain.


Consider an exact sequence of locally convex spaces
\begin{equation}
\label{seq-1}
 \SES{F}{G}{H}{}{Q} 
 \end{equation}
and let $E$ be a  locally convex Hausdorff space (=lcHs). A natural problem is to find conditions on the sequence \eqref{seq-1} and the space $E$ such that the tensorized  map
\begin{equation}
\label{tens-map}
Q \varepsilon \operatorname{id}_E: G \varepsilon E \to H \varepsilon E
\end{equation}
is surjective, where we use the Schwartz $\varepsilon$-product \cite{K-Ultradistributions3,S-TheorieDistValeurVect}. By using the $\varepsilon$-product representation of vector-valued function spaces \cite{BFJ, Kruse1}, such conditions may be used to study vector-valued analogs of various classical problems in analysis, e.g., interpolation problems for vector-valued analytic functions \cite{B-D-V-InterpolVVRealAnalFunc,B-LinTopStructClosedIdealsFAlg, Valdivia} and the solvability of PDE on spaces of vector-valued smooth functions \cite{DeKa22, Petzsche, Vogt1983-2} (which is an abstract formulation of the problem of parameter dependence of solutions of PDE \cite{B-D-ParamDepSolDiffEqSpDistSplittingSES,B-D-SplitExactSeqPLSSmoothDepSolLinPDE}).

From now on we suppose that the sequence \eqref{seq-1} consists of Fr\'echet spaces. If $E$ is a Fr\'echet space, it follows from standard results concerning completed tensor products of Fr\'echet spaces that the map \eqref{tens-map} is surjective if either $G$ or $E$ is nuclear (cf.\ Section \ref{sect:Eidelheit}).
The problem is much more difficult if $E$ is a lcHs with a fundamental sequence of bounded sets. If $G$ is Montel and $E$ is reflexive or $E$ is Montel, the  map \eqref{tens-map} is surjective if and only if the map
$$
L(E',G) \to L(E',H), \, T \mapsto Q \circ T
$$
is so. A sufficient condition for the surjectivity of the latter map is that $\Ext^{1}( E',F) = 0$, where  $\Ext^{1}(E', \, \cdot \,) = 0$ is the derived functor of the functor $L(E', \, \cdot \,)$ \cite[Chapter 5]{W-DerivFunctFuncAnal}. 
In \cite{V-FuncExt1Frechet} Vogt gave both sufficient and necessary conditions on pairs of Fr\'echet spaces ($X$,$Y$) such that $\Ext^{1}(X,Y) = 0$ in the following four standard cases: $(i)$ $X \cong \lambda^1(B)$; $(ii)$ $X$ is nuclear; $(iii)$ $Y \cong \lambda^\infty(B)$; $(iv)$ $Y$ is nuclear. He particularly showed that, in these cases, $\Ext^{1}(X,Y) = 0$ if $X$ satisfies  the condition $(\DN)$ and $Y$ satisfies the condition $(\Omega)$ \cite{M-V-IntroFuncAnal}  (see \cite{V-CharUnterraums,V-W-CharQuotientsMartineau} for earlier work in this direction). Later on, full characterizations of  $\Ext^{1}(X,Y) = 0$ were given in the four standard cases \cite[Section 5.2]{W-DerivFunctFuncAnal}. 

When studying the surjectivity of the map \eqref{tens-map}, it seems unnatural to impose additional assumptions on $E$ as is done above. However, without such assumptions on $E$, it is not possible to use results concerning the vanishing of $\Ext^{1}$ for Fr\'echet spaces to conclude the surjectivity of the map \eqref{tens-map}. In \cite{V-TensorFundDFRaumFortsetz} Vogt studied the surjectivity of the map \ref{tens-map} for general locally complete lcHs $E$ that have a fundamental sequence of bounded sets\footnote{In fact, Vogt assumed $E$ to be complete and worked with the $\widehat{\otimes}_\varepsilon$-product, but his results with the exact same proofs hold true in the present setting.} (see \cite{Petzsche,Varol} for related works). 
 If the sequence \eqref{seq-1} consists of Fr\'echet-Schwartz spaces (which in practice is very often the case), the dual sequence
$$
\SES{H'}{G'}{F'}{Q^t}{} 	
$$
consists of $(LB)$-spaces and is  exact, and the  map \eqref{tens-map} is surjective if and only if the map
$$
L(H',E) \to L(G',E), \, T \mapsto  T \circ Q^t
$$
is so. Motivated by this observation, Vogt studied the following general extension problem for $(LB)$-spaces.
\begin{problem}\label{problem1}\emph{
Consider an exact sequence of $(LB)$-spaces 
\begin{equation}
				\label{ses-intro}	
				 \SES{X}{Y}{Z}{\iota}{} 
				 \end{equation}
and let $E$ be another lcHs. Find conditions on the sequence \eqref{ses-intro} and the space $E$ such that the map
\begin{equation}
				\label{map-intro}	
 L(Y,E) \to L(X, E), \, T \mapsto T \circ \iota
\end{equation}
is surjective.}
\end{problem}
\noindent Vogt  mainly studied Problem \ref{problem1} in the setting of K\"othe co-echelon sequence spaces. His main result \cite[Satz 3.9]{V-TensorFundDFRaumFortsetz} asserts that if $Z$ is isomorphic to a closed subspace of a $k^1(A)$-space whose strong dual satisfies $(\Omega)$ and $E$ satisfies the condition $(\condA)$ \cite{V-VektorDistrRandHolomorpherFunk}, then the map \eqref{map-intro} is surjective for each exact sequence \eqref{ses-intro}.  This implies that, given an exact sequence \eqref{seq-1} of Fr\'echet-Schwartz spaces,  the map \eqref{tens-map} is surjective if $F$ is isomorphic to a quotient of  a $\lambda_0(B)$-space that satisfies $(\Omega)$ and $E$ satisfies $(\condA)$ \cite[Satz 4.2]{V-TensorFundDFRaumFortsetz}.

The main objective of this article is to generalize the work \cite{V-TensorFundDFRaumFortsetz} of Vogt by establishing a full characterization of the pairs ($E$, $Z$), $E$ a locally complete lcHs with a fundamental sequence of bounded sets and $Z$ an $(LB)$-space, such that the map \eqref{map-intro} is surjective for each exact sequence \eqref{ses-intro} of $(LB)$-spaces in the following four cases: $(i)$ $E \cong k^{\infty}(A)$; $(ii)$ $E$ is an $(LN)$-space; $(iii)$ $Z \cong k^1(B)$; $(iv)$ $Z$ is an $(LN)$-space. This will be done in Section \ref{sect:main}. Our characterizing condition is inspired by and closely related to the condition $(S_3^*)$ \cite[Definition 5.2.1]{W-DerivFunctFuncAnal} for pairs of Fr\'echet spaces that characterizes the vanishing of  $\Ext^{1}$ \cite[Section 5.2]{W-DerivFunctFuncAnal}.  In \cite{V-TensorFundDFRaumFortsetz} Vogt used a  direct Mittag-Leffler procedure to show his results. Our arguments are very similar in vein, but we use the machinery of the derived projective limit functor \cite{W-DerivFunctFuncAnal}, introduced by Palamodov \cite{P-HomMethTheoryLCS} into the theory of locally convex spaces, to abstract the Mittag-Leffler procedure (as is done in \cite[Section 5.2]{W-DerivFunctFuncAnal} for the vanishing of  $\Ext^{1}$ for Fr\'echet spaces). More precisely, we show in Subsection \ref{subsect-1} that under mild assumptions on $E$ and $Z$ (in particular, in the four above cases), the map \eqref{map-intro} is surjective if and only if
	\begin{equation} \label{eq:IntroProj1}
		\Proj^1 (L(Z_n,E))_{n \in \N} = 0,
	\end{equation}
where $(Z_n)_{n \in \N}$ is an inductive spectrum of Banach spaces whose inductive limit is $Z$. Hereafter, in Subsection \ref{subsect-2}, we evaluate when $\Proj^1 (L(Z_n, E))_{n \in \N} = 0$ in the four above cases by, roughly speaking, dualizing the techniques used in \cite{V-FuncExt1Frechet}.

From the point of view of applications, it is useful to have separate conditions on $E$ and $Z$  ensuring that the map \eqref{map-intro} is surjective for a given exact sequence \eqref{ses-intro} of $(LB)$-spaces.  In Section \ref{sect:sepcond} we show that the map  \eqref{map-intro} is surjective if  $E$ satisfies $(\condA)$ and the strong dual of $Z$ satisfies $(\Omega)$ in the four above mentioned cases. 

In Sections \ref{sect:splitting}--\ref{sect:Eidelheit} we present some applications of our extension results obtained in Sections \ref{sect:main} and \ref{sect:sepcond}.
 Extension problems for continuous linear maps are intrinsically linked to the splitting of short exact sequences, see e.g.\ \cite[Proposition 5.1.3]{W-DerivFunctFuncAnal}. In  Section \ref{sect:splitting}, we discuss the splitting of short exact sequences of $(LB)$-spaces. We consider the question of surjectivity of tensorized maps between Fr\'echet-Schwartz spaces in Section \ref{sec:ExtSESFrechet}. Namely, given an exact sequence \eqref{seq-1} of Fr\'echet-Schwartz spaces, we obtain there sufficient conditions on the pair $(E,F)$ such that the map \eqref{tens-map} is surjective in the following four cases: $(i)$ $E \cong k^{\infty}(A)$; $(ii)$ $E$ is an $(LN)$-space; $(iii)$ $F \cong \lambda_0(B)$; $(iv)$ $F$ is nuclear. We show particularly that, in these cases, the map \eqref{tens-map} is surjective if $F$ satisfies $(\Omega)$ and $E$ satisfies $(\condA)$. As an example of how the latter result may be used to study vector-valued analytic problems, we introduce and investigate the notion of vector-valued Eidelheit families \cite{E-TheorieSystemeLinGleichungen} in Section \ref{sect:Eidelheit}. As simple but instructive concrete applications, we study the vector-valued Borel problem and an interpolation problem for vector-valued holomorphic functions of one complex variable.

In \cite{B-D-ParamDepSolDiffEqSpDistSplittingSES, B-D-SplitExactSeqPLSSmoothDepSolLinPDE}  Bonet and Doma\'nski  initiated the study of the splitting of exact sequence of $(PLS)$-spaces. They obtained splitting results, similar to those for Fr\'echet spaces, for  exact sequences \eqref{seq-1}  of $(PLS)$-spaces  in the case that $H$ is a $(FS)$-space \cite{B-D-ParamDepSolDiffEqSpDistSplittingSES} or $H$ is an $(LS)$-space \cite{B-D-SplitExactSeqPLSSmoothDepSolLinPDE}. In particular,  \cite[Theorem 3.1]{B-D-SplitExactSeqPLSSmoothDepSolLinPDE}  comprises our splittings results from Section \ref{sect:splitting} for exact sequences of  $(LS)$-spaces (and thus also implies our extension results from Section \ref{sect:main} for $(LS)$-spaces via the connection between extension and splitting; see Section \ref{sect:splitting}). The latter also follows by dualizing and applying known splitting results for Fr\'echet spaces. In fact, when restricted to $(LS)$-spaces, the method of Bonet and Doma\'nski precisely boils down to this. This approach does not work for general $(LB)$-spaces, as opposed to ours. It seems interesting to investigate whether the results from  \cite{B-D-ParamDepSolDiffEqSpDistSplittingSES, B-D-SplitExactSeqPLSSmoothDepSolLinPDE} may be extended to general $(PLB)$-spaces via our techniques.

Finally, we would like to emphasize that our work should be seen as a completion of Vogt's work \cite{V-TensorFundDFRaumFortsetz} and all the essential ideas are due to him. We put his results into the framework of the derived projective limit functor and, inspired by \cite{V-FuncExt1Frechet}, extended them in several directions.

\section{Preliminaries}
In this preliminary section, we fix the notation and collect some background material concerning inductive and projective spectra of locally convex Hausdorff spaces that will be used throughout the article. We mainly follow the book \cite{W-DerivFunctFuncAnal}, see also \cite{P-HomMethTheoryLCS, V-TopicsProjSpectraLBSp, V-RegPropLFSp}.
\subsection{Notation}
Let $X$ be a lcHs (= locally convex Hausdorff space). We denote by $X^{\prime}$ the dual of $X$.  Unless specified otherwise, we endow $X^{\prime}$ with the strong topology. Given an absolutely convex bounded subset  $B$ of $X$,  we denote by $X_{B}$ the subspace of $X$ spanned by $B$ endowed with the topology generated by the Minkowski functional of $B$. Since $X$ is Hausdorff, $X_{B}$ is normed. The set $B$ is called a \emph{Banach disk} if $X_B$ is complete. $X$ is said to be \emph{locally complete} if every bounded subset of $X$  is contained in a Banach disk. See \cite[Proposition 5.1.6]{BP} for various characterizations of this condition. Every sequentially complete lcHs is locally complete by \cite[Corollary 23.14]{M-V-IntroFuncAnal}.

 A sequence 
\[ \SES{X}{Y}{Z}{\iota}{Q}  \]
of lcHS is said to be \emph{exact} if $\iota$ and $Q$ are continuous linear maps and the sequence is algebraically exact.
The sequence is called \emph{topologically exact} if $\iota$ is a topological embedding and  $Q$ is open. The sequence \emph{splits} if  $Q$ admits a continuous linear right inverse, or equivalently, if $\iota$ admits a continuous linear left inverse.

Given lcHs $X$ and $Y$,  we denote by $L(X, Y)$ the space consisting of all continuous linear maps from $X$ to $Y$. We define the $\varepsilon$-product \cite{K-Ultradistributions3,S-TheorieDistValeurVect} of $X$ and $Y$ as
	\[ X \varepsilon Y = L(X^{\prime}_{c}, Y) , \]
where the subscript $c$ indicates that we endow $X^{\prime}$ with the topology of uniform convergence on absolutely convex compact subsets of $X$. If $X$ is Montel, then $X \varepsilon Y = L(X^{\prime}, Y)$. The spaces $X \varepsilon Y$ and $Y \varepsilon X$ are canonically isomorphic via transposition \cite[p.~657]{K-Ultradistributions3}.  
Let $X_1,X_2, Y_1, Y_2$ be lcHs. For $T \in L(X_1,X_2)$ and $S \in L(Y_1,Y_2)$ we define the map
\[ T \varepsilon S : X_1 \varepsilon Y_1 \rightarrow X_2 \varepsilon Y_2 , \, R \mapsto   S \circ R \circ T^{t}. \]

\subsection{Inductive spectra}  A sequence $\mathscr{X} = (X_{n})_{n \in \N}$ of lcHs with $X_n \subseteq X_{n+1}$ and continuous inclusion maps is called an \emph{inductive spectrum of lcHs}. We define the \emph{inductive limit} of  $\mathscr{X}$ as the set  $X =\bigcup_{n \in \N} X_n$ endowed with the finest locally convex topology such that all inclusion maps $X_n \to X$ are continuous.  We write $X = \Ind \mathscr{X}$.  
Two inductive spectra of lcHs $(X_{n})_{n \in \N}$ and $(Y_{n})_{n \in \N}$ are said to be \emph{equivalent} if  there are sequences of natural numbers $(k_{n})_{n \in \N}$ and $(l_{n})_{n \in \N}$ with $n \leq k_{n} \leq l_{n} \leq k_{n + 1}$  such that $X_{k_{n}} \subseteq Y_{l_{n}} \subseteq X_{k_{n+1}}$ and the inclusion maps are continuous.

 Let $\mathscr{X} = (X_{n})_{n \in \N}$ be an inductive spectrum of lcHs and set $X = \Ind \mathscr{X}$. The \emph{injective resolution} of $\mathscr{X}$ is defined as the exact sequence 
	\begin{equation}
		\label{eq:InjectiveResolution}
		\SES{\bigoplus_{n \in \N} X_{n}}{\bigoplus_{n \in \N} X_{n}}{X}{d}{\sigma}
	\end{equation} 
where $d((x_{n})_{n \in \N}) = (x_{n} -x_{n - 1})_{n \in \N}$ ($x_0 = 0$) and $\sigma((x_{n})_{n \in \N}) = \sum_{n \in \N} x_{n}$. The map $\sigma$ is open.  $\mathscr{X}$ is called \emph{(weakly) acyclic} if $d$ is a (weak) topological embedding.  

A lcHs $X$ is called an \emph{$(LB)$-space} if $X =  \Ind \mathscr{X}$ for some inductive spectrum of Banach spaces $\mathscr{X}$. We emphasize that we impose $(LB)$-spaces to be Hausdorff (in general the inductive limit of an inductive spectrum of Banach spaces does not need to be Hausdorff, see e.g.\ \cite[Example 24.9]{M-V-IntroFuncAnal}). All inductive spectra of Banach spaces defining a given $(LB)$-space are equivalent, as follows from Grothendieck's factorization theorem \cite[Theorem 24.33]{M-V-IntroFuncAnal}.  
Every $(LB)$-space $X$ has a fundamental sequence of bounded sets, i.e., a sequence of bounded sets $(B_N)_{N \in \N}$ such that for each bounded set $B \subseteq X$ there is $N \in \N$ such that $B \subseteq B_N$ \cite[Corollary 8.3.19]{BP}. Moreover, $(LB)$-spaces are webbed and ultrabornological  \cite[Remark 24.36]{M-V-IntroFuncAnal}, hence we may apply De Wilde's open mapping and closed graph theorem \cite[Theorems 24.30 and 24.31]{M-V-IntroFuncAnal} to maps between them.

An $(LB)$-space $X =  \Ind \mathscr{X}$, with $\mathscr{X}$ an inductive spectrum of Banach spaces, is called (weakly) acyclic if  $\mathscr{X}$ is (weakly) acyclic.
This definition does not depend on the choice of the inductive spectrum of Banach spaces $\mathscr{X}$ defining $X$ \cite[Corollary 1.3]{V-RegPropLFSp}.  We refer to  \cite[Theorem 6.4]{W-DerivFunctFuncAnal}  for various characterizations of this notion.

 A lcHs $X$ is called an \emph{$(LN)$-space} if $X =  \Ind \mathscr{X}$ for some inductive spectrum of Banach spaces $\mathscr{X} = (X_{n})_{n \in \N}$ with nuclear inclusion maps $X_n \to X_{n+1}$. A lcHs $X$ is an $(LN)$-space if and only if it is a nuclear  $(LB)$-space. Every $(LN)$-space is acyclic (which e.g.\ follows from \cite[Theorem 6.4]{W-DerivFunctFuncAnal} and the fact that nuclear maps between Banach spaces are compact).


\subsection{Projective spectra}
A \emph{projective spectrum} $\mathscr{X}$ is a sequence of vector spaces $(X_{n})_{n \in \N}$ together with linear maps $\varrho_{m}^{n} : X_{m} \rightarrow X_{n}$, $n \leq m$, such that $\varrho^{n}_{n} = \operatorname{id}_{X_n}$ and $\varrho_{k}^{n} = \varrho_{m}^{n} \circ \varrho_{k}^{m}$ for $n \leq m \leq k$.   We write $\mathscr{X} = (X_{n}, \varrho_{m}^{n})$. Consider the linear map
	\[ \Psi : \prod_{n \in \N} X_{n} \rightarrow \prod_{n \in \N} X_{n}, \, (x_{n})_{n \in \N} \mapsto (x_{n} - \varrho_{n + 1}^{n}(x_{n + 1}))_{n \in \N} . \]
We define the \emph{projective limit} of $\mathscr{X}$ as
	\[ \Proj \mathscr{X}  = \ker \Psi =  \left\{ (x_{n})_{n \in \N} \in \prod_{n \in \N} X_{n} \mid \varrho_{m}^{n}(x_{m}) = x_{n} \text{ for all } n \leq m \right\} \]
and the \emph{first derived projective limit} of $\mathscr{X}$ as
	\[ \Proj^{1} \mathscr{X} = \left( \prod_{n \in \N} X_{n} \right) / \im \Psi . \]

Let $\mathscr{X} = (X_{n}, \varrho_{m}^{n})$ and $\mathscr{Y} = (Y_{n}, \sigma_{m}^{n})$ be two projective spectra.  A  \emph{morphism} $f = (f_{n})_{n \in \N}: \mathscr{X}  \to \mathscr{Y}$ consists of  linear maps $f_{n}: X_{n} \to Y_{n}$ such that $f_{n} \circ \varrho_{m}^{n} = \sigma_{m}^{n} \circ f_{m}$ for $n \leq m$. The kernel of $f$ is defined as the projective spectrum consisting of the sequence of vector spaces $(\ker f_n)_{n \in \N}$ and spectral maps  ${\varrho_{m}^{n}}_{\mid \ker f_m}: \ker f_m \to \ker f_n$.  We define the linear map
$$ 
\Proj f : \Proj \mathscr{X} \rightarrow \Proj \mathscr{Y}, \, (x_{n})_{n \in \N} \mapsto (f_{n}(x_{n}))_{n \in \N}.
$$

Two projective spectra $\mathscr{X} = (X_{n}, \varrho_{m}^{n})$ and $\mathscr{Y} = (Y_{n}, \sigma_{m}^{n})$ are called \emph{equivalent} if there are sequences $(k_{n})_{n \in \N}$ and $(l_{n})_{n \in \N}$ of natural numbers such that $n \leq l_{n} \leq k_{n} \leq l_{n + 1}$ and linear maps $\alpha_{n} : X_{k_{n}} \rightarrow Y_{l_{n}}$ and $\beta_{n + 1} : Y_{l_{n + 1}} \rightarrow X_{k_{n}}$ such that $\beta_{n + 1} \circ \alpha_{n + 1} = \varrho_{k_{n + 1}}^{k_{n}}$ and $\alpha_{n} \circ \beta_{n + 1} = \sigma_{l_{n + 1}}^{l_{n}}$. In such a case, we have that $\Proj \mathscr{X} \cong \Proj \mathscr{Y}$ and $\Proj^{1} \mathscr{X} \cong \Proj^{1} \mathscr{Y}$ \cite[Proposition 3.1.7]{W-DerivFunctFuncAnal}.

We shall use the following fundamental property of the derived projective limit. 

	\begin{proposition}\cite[Proposition 3.1.8]{W-DerivFunctFuncAnal}
		\label{t:AbstractMittagLeffler}
		Let $\mathscr{Y} = (Y_{n}, \varrho_{m}^{n})$ and $\mathscr{Z} = (Z_{n}, \tau_{m}^{n})$ be two projective spectra and let $f = (f_{n})_{n \in \N} : \mathscr{Y} \rightarrow \mathscr{Z}$ be a morphism. Suppose that for all $n \in \N$ there is $m \in \N$ such that  
					$ \tau^{n}_{m}(Z_{m}) \subseteq f_{n}(Y_{n})$. Let  $\mathscr{X}$ be the kernel of $f$. If $\Proj^{1} \mathscr{X} = 0$, then $\Proj f : \Proj \mathscr{Y} \rightarrow \Proj \mathscr{Z}$ is surjective.
	\end{proposition}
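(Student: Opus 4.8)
The plan is to prove surjectivity of $\Proj f$ directly from the definition of $\Proj^1$ as a cokernel, via a lifting-and-correction argument. The key observation is that $\Proj^1 \mathscr{X} = 0$ means precisely that the map $\Psi$ attached to the kernel spectrum $\mathscr{X} = (\ker f_n, {\varrho_m^n}_{\mid \ker f_m})$ is surjective on $\prod_{n} \ker f_n$; I will write $\Psi_{\mathscr{X}}$, $\Psi_{\mathscr{Y}}$, $\Psi_{\mathscr{Z}}$ for the maps attached to the three spectra. I would fix a thread $z = (z_n)_{n} \in \Proj \mathscr{Z}$, so that $\tau_m^n(z_m) = z_n$ for $n \leq m$, and aim to produce $y \in \Proj \mathscr{Y}$ with $f_n(y_n) = z_n$ for every $n$.

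First I would lift $z$ levelwise. Using the hypothesis, for each $n$ I choose $m \geq n$ with $\tau_m^n(Z_m) \subseteq f_n(Y_n)$; since $z$ is a thread, $z_n = \tau_m^n(z_m) \in \tau_m^n(Z_m) \subseteq f_n(Y_n)$, so $z_n \in \im f_n$. Picking any $y^{(0)}_n \in Y_n$ with $f_n(y^{(0)}_n) = z_n$ yields $y^{(0)} = (y^{(0)}_n)_{n} \in \prod_{n} Y_n$ that maps to $z$ at each level but need not be a thread.

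Next I would measure and annihilate the defect. Let $d = \Psi_{\mathscr{Y}}(y^{(0)})$, that is $d_n = y^{(0)}_n - \varrho_{n+1}^n(y^{(0)}_{n+1})$. From the morphism identity $f_n \circ \varrho_{n+1}^n = \tau_{n+1}^n \circ f_{n+1}$ together with $z$ being a thread, a one-line computation gives $f_n(d_n) = z_n - \tau_{n+1}^n(z_{n+1}) = 0$, so $d \in \prod_{n} \ker f_n$. The same identity shows that $\varrho_{n+1}^n$ maps $\ker f_{n+1}$ into $\ker f_n$, hence $\Psi_{\mathscr{Y}}$ restricts on $\prod_{n} \ker f_n$ to exactly $\Psi_{\mathscr{X}}$. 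Since $\Proj^1 \mathscr{X} = 0$, the map $\Psi_{\mathscr{X}}$ is surjective, so there is $x = (x_n)_{n} \in \prod_{n} \ker f_n$ with $\Psi_{\mathscr{X}}(x) = d$.

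Finally I would set $y = y^{(0)} - x$. Then $\Psi_{\mathscr{Y}}(y) = d - \Psi_{\mathscr{X}}(x) = 0$, so $y \in \Proj \mathscr{Y}$, while $f_n(y_n) = f_n(y^{(0)}_n) - f_n(x_n) = z_n - 0 = z_n$ because $x_n \in \ker f_n$; thus $\Proj f(y) = z$, proving surjectivity. The only genuinely substantive step is the levelwise lifting in the first stage: that is exactly where the image condition $\tau_m^n(Z_m) \subseteq f_n(Y_n)$ enters, and it is what replaces levelwise surjectivity of $f$, which is \emph{not} assumed. Everything afterward is the standard reduction of threadability to the vanishing of $\Proj^1$ of the kernel, requiring only the routine checks that the defect lies in $\prod_{n} \ker f_n$ and that the two maps $\Psi_{\mathscr{Y}}$ and $\Psi_{\mathscr{X}}$ agree there.
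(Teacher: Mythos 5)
Your proof is correct, and it is essentially the standard argument: the paper states this result with a citation to Wengenroth's book, and the proof given there is exactly your lifting-and-correction scheme (levelwise lift of the thread via the image condition, defect landing in $\prod_n \ker f_n$, correction by surjectivity of $\Psi_{\mathscr{X}}$). All the routine verifications you flag --- that the defect lies in the kernel product and that $\Psi_{\mathscr{Y}}$ restricts to $\Psi_{\mathscr{X}}$ there --- are carried out correctly, so there is nothing to add.
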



For projective spectra  $\mathscr{X} = (X_{n}, \varrho^{n}_{m})$  consisting of $(LB)$-spaces $X_n$ and continuous linear maps $\varrho^{n}_{m}$ there are several necessary and sufficient conditions for $\Proj^{1}\mathscr{X} = 0$ to hold \cite{P-HomMethTheoryLCS,V-TopicsProjSpectraLBSp, W-DerivFunctFuncAnal}. We will employ the following ones.
\begin{proposition}
\label{P:CondProj1=0} Let $\mathscr{X} = (X_{n}, \varrho^{n}_{m})$ be a projective spectrum consisting of $(LB)$-spaces $X_n$ and continuous linear maps $\varrho^{n}_{m}$.  Suppose that $X_n = \Ind \mathscr{X}_n$ with  $\mathscr{X}_n= (X_{n,N})_{N \in \N}$ an inductive spectrum of Banach spaces. Let $B_{n, N}$ be the unit ball in $X_{n, N}$. Consider the conditions
			\begin{equation}
				\label{eq:SuffCondProj1=0X}
				\begin{gathered}
					\forall n \in \N ~ \exists m \geq n ~ \forall k \geq m ~ \exists N \in \N ~ \forall M \geq N, \varepsilon > 0 ~ \exists K \geq M, C > 0 \, : \\
					\varrho^{n}_{m}(B_{m,M}) \subseteq C \varrho^{n}_{k}(B_{k,K})  + \varepsilon B_{n,N},
					\end{gathered}
			\end{equation}
		and
			\begin{equation}
				\label{eq:NeccCondProj1=0X}
				\begin{gathered}
					\forall n \in \N ~ \exists m \geq n ~ \forall k \geq m ~ \exists N \in \N ~ \forall M \geq N~ \exists K \geq M, C > 0  \, : \\
					\varrho^{n}_{m}(B_{m,M}) \subseteq C (\varrho^{n}_{k}(B_{k,K})  + B_{n,N}).
					\end{gathered}
			\end{equation}
		Then, \eqref{eq:SuffCondProj1=0X} $\Rightarrow$ $\Proj^{1}\mathscr{X} = 0$ $\Rightarrow$ \eqref{eq:NeccCondProj1=0X}.
\end{proposition}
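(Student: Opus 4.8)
The plan is to prove both implications by working directly with the connecting map
$\Psi \colon \prod_{n \in \N} X_n \to \prod_{n \in \N} X_n$, $(x_n)_n \mapsto (x_n - \varrho^{n}_{n+1}(x_{n+1}))_n$, since by definition $\Proj^{1}\mathscr{X} = (\prod_n X_n)/\im \Psi$, so that $\Proj^{1}\mathscr{X} = 0$ is equivalent to the surjectivity of $\Psi$. Throughout I would use two elementary consequences of Grothendieck's factorization theorem: every $x \in X_n$ lies in $C\, B_{n,N}$ for some $N$ and $C > 0$ (the $B_{n,N}$ forming a fundamental sequence of bounded sets), and each continuous map $\varrho^{n}_{m}$ sends $X_{m,M}$ into some $X_{n,N}$, i.e.\ $\varrho^{n}_{m}(B_{m,M}) \subseteq C\, B_{n,N}$. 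The engine of the argument is the telescoping identity: writing $\varrho^{n}_{j} = \varrho^{n}_{n+1} \circ \cdots \circ \varrho^{j-1}_{j}$ (with $\varrho^{n}_{n} = \id$), any solution $(x_n)_n$ of $\Psi((x_n)_n) = (y_n)_n$ satisfies, for all $n$ and all $p \geq n$,
\[ \sum_{j=n}^{p} \varrho^{n}_{j}(y_j) = x_n - \varrho^{n}_{p+1}(x_{p+1}), \]
and conversely, if for each $n$ the left-hand side converges in $X_n$ as $p \to \infty$, its limit defines a solution.

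For the sufficiency \eqref{eq:SuffCondProj1=0X} $\Rightarrow \Proj^{1}\mathscr{X} = 0$ I would run a quantitative Mittag-Leffler procedure producing, for given $(y_n)_n$, such a convergent series after correcting the data by coboundaries. Fixing $n$ and the level $m = m(n)$ supplied by \eqref{eq:SuffCondProj1=0X}, the point is that each tail term $\varrho^{n}_{j}(y_j)$ with $j \geq m$ factors as $\varrho^{n}_{m}$ applied to an element of some bounded set $B_{m,M}$, and \eqref{eq:SuffCondProj1=0X} lets me replace $\varrho^{n}_{m}(B_{m,M})$, up to an error in $\varepsilon B_{n,N}$, by a set $C\,\varrho^{n}_{k}(B_{k,K})$ coming from an arbitrarily deep level $k$. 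Interpreting the part in $\varrho^{n}_{k}(B_{k,K})$ as a coboundary to be absorbed by modifying the solution from level $k$ onward, and choosing the approximation errors $\varepsilon$ along a summable (say geometric) sequence, the corrected partial sums become a Cauchy series in a fixed Banach space $X_{n,N}$ and hence converge in $X_n$. Telescoping over all levels then yields the desired $(x_n)_n$.

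For the necessity $\Proj^{1}\mathscr{X} = 0 \Rightarrow \eqref{eq:NeccCondProj1=0X}$ I would extract quantitative information from the mere surjectivity of $\Psi$. Since countable products of $(LB)$-spaces are again ultrabornological and webbed, De Wilde's open mapping theorem applies and shows that the continuous surjection $\Psi$ is open; testing openness against product neighbourhoods built from the balls $B_{n,N}$ yields uniform preimage estimates for data supported on a single coordinate. Feeding such single-coordinate data into the telescoping identity above (at a level $p+1 = k$ adapted to the inner quantifier of the condition) expresses the corresponding partial sum as a fixed element of $X_n$ minus an image $\varrho^{n}_{k}(x_{k})$, which is exactly the decomposition $C(\varrho^{n}_{k}(B_{k,K}) + B_{n,N})$ appearing in \eqref{eq:NeccCondProj1=0X}; reading off the balls and constants from the open-mapping estimate then gives the required inclusion.

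The main obstacle is the index bookkeeping in the sufficiency direction. The delicate point is that in \eqref{eq:SuffCondProj1=0X} the Banach step $N$ controlling the error depends on the target depth $k$, whereas convergence of the corrected series must take place in one fixed step $X_{n,N}$; the construction must therefore be organised, via a careful recursive choice of the auxiliary sequences $(k_j)$, $(K_j)$, $(N_j)$ and constants $(C_j)$ together with the summable errors, so that all corrections at level $n$ ultimately land in a single Banach space while remaining compatible, under the spectral maps, with the corrections made at every other level. Reconciling this $k$-dependence of $N$ with convergence in a fixed step is the technical heart of the proof; the necessity direction, once openness of $\Psi$ and the telescoping identity are in hand, is comparatively routine.
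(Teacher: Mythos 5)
Your overall plan is to re-prove from scratch the two results that the paper merely cites: the paper handles the sufficiency by reducing \eqref{eq:SuffCondProj1=0X}, via Grothendieck's factorization theorem (every Banach disk in $X_n$ sits inside some $C B_{n,N}$), to \cite[Theorem 3.2.14]{W-DerivFunctFuncAnal}, and it quotes \cite[Lemma 2.3]{V-TopicsProjSpectraLBSp} for the necessity. Re-proving these is legitimate, but your necessity argument contains a genuine flaw. De Wilde's open mapping theorem (granting that the countable product $\prod_n X_n$ is ultrabornological, which is true but itself needs justification) only gives: for every $0$-neighbourhood $U$ in $\prod_n X_n$, $\Psi(U)$ is a $0$-neighbourhood. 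Basic neighbourhoods in the product restrict only \emph{finitely many} coordinates, and in each $(LB)$-space $X_n$ the balls $B_{n,N}$ are bounded sets, never neighbourhoods (unless $X_n$ is normable); conversely every $0$-neighbourhood in $X_n$ is absorbing and contains a multiple of \emph{every} $B_{n,N}$. So there are no ``product neighbourhoods built from the balls $B_{n,N}$'' to test openness against, and openness yields no control on the deep coordinate $x_k$ of a preimage (coordinates beyond the finite set are unconstrained) nor any statement locating preimages in bounded step-balls. But \eqref{eq:NeccCondProj1=0X} is exactly a bounded-set decomposition, and moreover one with the quantifier order $\exists N \ \forall M$, i.e.\ the step $N$ must be uniform over all the bounded sets $B_{m,M}$ being decomposed; neither feature can be extracted from openness of $\Psi$. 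This is why Vogt's proof of \cite[Lemma 2.3]{V-TopicsProjSpectraLBSp} argues directly at the level of the bounded sets/Banach disks of the steps rather than via the open mapping theorem; your telescoping identity is fine, but the quantitative input feeding it is not available by your route.

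On the sufficiency, your Mittag--Leffler plan is the standard one, and you correctly isolate the crux: in \eqref{eq:SuffCondProj1=0X} the step $N$ depends on $k$, while the corrected series at level $n$ must converge in a single Banach step. However, you only announce that the bookkeeping ``must be organised'' to fix this, which is not a proof. The known resolving device is to use the condition at each level $n$ for \emph{one} fixed $k = k(n)$ only (hence one fixed step $N(n)$), exploiting that the inner quantifiers range over all $M \geq N$ and all $\varepsilon > 0$: all main parts arising at level $n$ are pushed to the same level $k(n)$, where they are in turn decomposed using that level's single fixed step, and the errors produced at level $n$ across all stages then lie in the one Banach space $X_{n,N(n)}$ with geometrically summable coefficients. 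Without this organizing principle (or the paper's reduction to \cite[Theorem 3.2.14]{W-DerivFunctFuncAnal}), the sufficiency direction remains a sketch whose central difficulty is left open.
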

\begin{proof}
By Grothendieck's factorization theorem, every Banach disk in $X_n$ is contained in $CB_{n, N}$ for some $C>0$ and $N \in \N$. Hence, \eqref{eq:SuffCondProj1=0X} $\Rightarrow$ $\Proj^{1} \mathscr{X}  = 0$  follows from \cite[Theorem 3.2.14]{W-DerivFunctFuncAnal}.
The implication $\Proj^{1} \mathscr{X} = 0$ $\Rightarrow$ \eqref{eq:NeccCondProj1=0X} is shown in \cite[Lemma 2.3]{V-TopicsProjSpectraLBSp}.
\end{proof}

\subsection{Sequence spaces}\label{ss:seq}
Throughout this article, $I$ will always denote an arbitrary index set. We define the Banach spaces
	\begin{gather*}\ell^{1}(I) = \{ c = (c_{j})_{j \in I} \in \C^{I} \mid \|c\|_{\ell^{1}(I)} = \sum_{j \in I} |c_{j}|< \infty \} , \\
	\ell^{\infty}(I) = \{ c = (c_{j})_{j \in I} \in \C^{I} \mid \|c\|_{\ell^{\infty}(I)} = \sup_{j \in I} |c_{j}| < \infty \}.
	\end{gather*} 
We will often drop the index set $I$ from the notation and simply write $\ell^{1} = \ell^{1}(I)$ and $\ell^{\infty} = \ell^{\infty}(I)$; we will do the same for other spaces as well.
For a sequence $a = (a_{j})_{j \in I}$ of positive real numbers we define the Banach spaces
	\begin{align*} 
		 \ell^{p}(a)  & = \{ c = (c_{j})_{j \in I} \in \C^{I} \mid \|c\|_{\ell^{p}(a)} = \|ac\|_{\ell^{p}} < \infty \} , \qquad p =1,\infty.
	\end{align*}
We write $c_{0}(a)$ for the space consisting of all $c \in \ell^{\infty}(a)$ such that for every $\varepsilon > 0$ there is a finite subset $I_{\varepsilon} \subseteq I$ such that $\sup_{j \in I \setminus I_{\varepsilon}} |a_jc_{j}| \leq \varepsilon$. Then $c_{0}(a)$  is a closed subspace of $\ell^{\infty}(a)$ and hence also a Banach space.
	
Let $A = (a_{n})_{n \in \N} =  (a_{n,j})_{n \in \N, j \in I} $ be a sequence of  positive sequences on $I$ such that $a_{n + 1, j} \leq a_{n, j}$ for all $n \in \N$ and $j \in I$.  We define the $(LB)$-spaces
	\[ k^{p}(A) = \Ind (\ell^{p}(a_{n}))_{n \in \N} , \quad p =1, \infty, \qquad k_{0}(A) = \Ind (c_{0}(a_{n}))_{n \in \N} . \]
Let $B = (b_{n})_{n \in \N} =  (b_{n,j})_{n \in \N, j \in I} $ be a sequence of  positive sequences on $I$ such that $b_{n, j} \leq b_{n+1, j}$ for all $n \in \N$ and $j \in I$. We define the spaces
\[ \lambda^{p}(B) = \bigcap_{n \in \N} \ell^{p}(b_{n}) , \quad p =1, \infty, \qquad \lambda_{0}(B) = \bigcap_{n \in \N} c_{0}(b_{n}), \]
and endow them with their natural Fr\'echet space topology.

	
\section{An extension result for $(LB)$-spaces}\label{sect:main}

This section is devoted to Problem \ref{problem1} from the introduction. Our main result will be the characterization of the surjectivity of \eqref{map-intro} in terms of $E$ and $Z$ in the standard cases, see Theorem \ref{t:MainExtensionThm}. 
Our characterization will be in the form of condition $(\condS)$ which we introduce and study in the ensuing Subsection \ref{sec:condS}. 
Similar to Vogt's work \cite{V-TensorFundDFRaumFortsetz}, the proof of Theorem \ref{t:MainExtensionThm} will consist of two steps. 
First, in Subsection \ref{subsect-1}, we show that under mild assumptions on $E$ and $Z$  the surjectivity of \eqref{map-intro} is equivalent to $\Proj^{1} L(Z, E) = 0$, see Theorem \ref{t:MittagLeffler}. This shows that we may solve our extension problem using the abstract Mittag-Leffler procedure. This subsection corresponds to Section 2 of \cite{V-TensorFundDFRaumFortsetz}, in particular Satz 2.3. 
Then, in Subsection \ref{subsect-2}, we characterize when $\Proj^{1} L(Z, E) = 0$, under certain assumptions on $E$ and $Z$, by using Proposition \ref{P:CondProj1=0} and  by essentially dualizing the arguments  from \cite{V-FuncExt1Frechet}. This is similar to what is done in Section 3 of \cite{V-TensorFundDFRaumFortsetz}, specifically Satz 3.9.


\subsection{The conditions $(\condS)$ and $(\condWS)$}
\label{sec:condS}

The following conditions will play an essential role in the rest of this article.
	
\begin{definition} Let $E$ be a lcHs with a fundamental increasing sequence of bounded sets $(B_{N})_{N \in \N}$ and let $\mathscr{Z} = (Z_n)_{n\in\N}$ be an inductive spectrum of Banach spaces.  
\begin{itemize}
\item[(i)] The pair  $(E, \mathscr{Z})$ is said to satisfy $(\condS)$ if		
					\begin{equation*}
						\label{eq:S}
						\begin{gathered}
							\forall n \in \N ~ \exists m \geq n ~ \forall k \geq m ~ \exists N \in \N ~ \forall M \geq N, \varepsilon >0~ \exists K \geq M, C > 0 \,: \\
							 \|z\|_{Z_{m}} B_{M} \subseteq C \|z\|_{Z_{k}} B_{K} + \varepsilon \|z\|_{Z_{n}} B_{N} , \qquad \forall  z \in Z_n.
						\end{gathered}
					\end{equation*}	

\item[(ii)]The pair  $(E, \mathscr{Z})$ is said to satisfy $(\condWS)$ if
					\begin{equation*}
						\label{eq:WS}
						\begin{gathered}
					 		\forall n \in \N ~ \exists m \geq n ~ \forall k \geq m ~ \exists N \in \N ~ \forall M \geq N ~ \exists K \geq M, C > 0 \,: \\
							 \|z\|_{Z_{m}} B_{M} \subseteq C \left( \|z\|_{Z_{k}} B_{K} + \|z\|_{Z_{n}} B_{N} \right), \qquad \forall  z \in Z_n.
						\end{gathered}
					\end{equation*}	
\end{itemize}
\end{definition}
\noindent The conditions $(\condS)$ and  $(\condWS)$ do not depend on the choice of the fundamental increasing sequence of bounded sets $(B_{N})_{N \in \N}$ in $E$. Moreover, if  $\mathscr{Z}$ and  $\widetilde{\mathscr{Z}}$ are two equivalent inductive spectra of Banach spaces, then  $(E,\mathscr{Z})$ satisfies $(\condS)$ or $(\condWS)$  if and only if $(E,\widetilde{\mathscr{Z}})$ does so.
Let $Z = \Ind \mathscr{Z}$ be an $(LB)$-space, with  $\mathscr{Z}$ an inductive spectrum of Banach spaces. We say that $(E, Z)$ satisfies $(\condS)$  or $(\condWS)$  if $(E,  \mathscr{Z})$ does so. Since all inductive spectra of Banach spaces defining $Z$ are equivalent, this definition does not depend on the choice of the spectrum $ \mathscr{Z}$ of Banach spaces defining $Z$.

We now study the connection between $(\condS)$ and $(\condWS)$. To this end, we will use that an inductive spectrum of Banach spaces $\mathscr{Z} = (Z_n)_{n\in\N}$ is acyclic if and only if it satisfies condition $(\condQ)$ \cite[Theorem 2.7]{W-AcyclicIndSpectraFrechetSp}:
			\begin{equation*}
				\begin{gathered}
				\forall n \in \N ~ \exists m \geq n ~ \forall k \geq m, \varepsilon > 0 ~ \exists C > 0 \, : \\
				  \|z\|_{Z_{m}} \leq C \|z\|_{Z_{k}} + \varepsilon \|z\|_{Z_{n}}, \qquad  \forall z \in Z_{n}.
				  		\end{gathered}
			\end{equation*}

	\begin{proposition}
		\label{l:S=WS+acyclic}
		Let $E$ be a  lcHs with a fundamental increasing sequence of bounded sets $(B_{N})_{N \in \N}$  and let $\mathscr{Z} = (Z_n)_{n\in\N}$ be an inductive spectrum of Banach spaces.  Then, $(E, \mathscr{Z})$ satisfies $(\condS)$ if and only if $(E, \mathscr{Z} )$ satisfies $(\condWS)$ and $\mathscr{Z}$ is acyclic.
	\end{proposition}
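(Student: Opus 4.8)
The plan is to prove the two implications separately, and to keep the set manipulations clean I would first reduce to the case where the $B_{N}$ are absolutely convex: this is harmless, since the two conditions do not depend on the chosen fundamental increasing sequence of bounded sets, and the absolutely convex hulls of the $B_{N}$ again form such a sequence. With this reduction the implication $(\condS) \Rightarrow (\condWS)$ is immediate: keeping the same step $n \mapsto m$, I would apply $(\condS)$ with $\varepsilon = 1$ and replace $C$ by $\max(C,1)$, so that $C\|z\|_{Z_{k}}B_{K} + \|z\|_{Z_{n}}B_{N} \subseteq C(\|z\|_{Z_{k}}B_{K} + \|z\|_{Z_{n}}B_{N})$.

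The first real point is to deduce that $\mathscr{Z}$ is acyclic, i.e.\ satisfies $(\condQ)$, from $(\condS)$; here I would convert the set inclusion into a scalar inequality by testing against a seminorm. Assuming $E \neq \{0\}$ (the conditions involving $E$ holding trivially otherwise), fix $e_{0} \in E$ and a continuous seminorm $p$ on $E$ with $\delta := p(e_{0}) > 0$; since $\{e_{0}\}$ is bounded there is $M_{0}$ with $e_{0} \in B_{M_{0}}$, and $\beta_{N} := \sup_{b \in B_{N}} p(b) < \infty$ for every $N$ by boundedness. Keeping the step $n \mapsto m$ of $(\condS)$, I would, given $k \geq m$ and $\varepsilon' > 0$, take $N$ from $(\condS)$, choose any $M \geq \max(N,M_{0})$ (so $e_{0} \in B_{M}$), put $\varepsilon := \varepsilon'\delta/\beta_{N}$, and take the resulting $K,C$. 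Since then $\|z\|_{Z_{m}}e_{0} \in C\|z\|_{Z_{k}}B_{K} + \varepsilon\|z\|_{Z_{n}}B_{N}$, applying $p$ and bounding $p$ by $\beta_{K}$ on $B_{K}$ and by $\beta_{N}$ on $B_{N}$ gives
\[ \delta\|z\|_{Z_{m}} \leq C\beta_{K}\|z\|_{Z_{k}} + \varepsilon\beta_{N}\|z\|_{Z_{n}}, \qquad z \in Z_{n}, \]
hence $\|z\|_{Z_{m}} \leq (C\beta_{K}/\delta)\|z\|_{Z_{k}} + \varepsilon'\|z\|_{Z_{n}}$, which is $(\condQ)$.

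For the converse I would assume $(\condWS)$ and $(\condQ)$ and run the inclusion of $(\condWS)$ once, then use $(\condQ)$ to shrink the coefficient of the small set. Given $n$, let $\ell \geq n$ be the index that $(\condQ)$ assigns to $n$, and let $m \geq \ell$ be the index that $(\condWS)$ assigns to the base index $\ell$. For $k \geq m$, applying $(\condWS)$ with base $\ell$ and top $k$ yields $N$ (and, for each $M \geq N$, some $K$ and $C_{1}$) with
\[ \|z\|_{Z_{m}}B_{M} \subseteq C_{1}\|z\|_{Z_{k}}B_{K} + C_{1}\|z\|_{Z_{\ell}}B_{N}, \qquad z \in Z_{n}, \]
where I used $Z_{n} \subseteq Z_{\ell}$. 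Given a target $\varepsilon > 0$, I would apply $(\condQ)$ with base $n$, top $k$ (note $k \geq m \geq \ell$) and tolerance $\varepsilon/C_{1}$ to obtain $C_{Q}$ with $\|z\|_{Z_{\ell}} \leq C_{Q}\|z\|_{Z_{k}} + (\varepsilon/C_{1})\|z\|_{Z_{n}}$; substituting into the second summand, using $B_{N} \subseteq B_{K}$ and the absolute convexity of the $B$'s, gives
\[ \|z\|_{Z_{m}}B_{M} \subseteq C_{1}(1 + C_{Q})\|z\|_{Z_{k}}B_{K} + \varepsilon\|z\|_{Z_{n}}B_{N}, \]
which is $(\condS)$ with $C := C_{1}(1 + C_{Q})$. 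The quantifier order is respected since $C_{1}$ depends only on $M$ and not on the target $\varepsilon$.

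The main obstacle is the implication $(\condS) \Rightarrow (\condQ)$: $(\condS)$ speaks about inclusions of subsets of $E$, whereas acyclicity is a purely scalar property of $\mathscr{Z}$, so a scalar estimate must be manufactured from a set inclusion. The device above — testing against a seminorm $p$ with $p(e_{0}) > 0$ for a fixed $e_{0}$ lying in all sufficiently large $B_{M}$ — is exactly what produces the genuine lower bound $\delta\|z\|_{Z_{m}}$ on the left while the bounded sets on the right contribute only the finite suprema $\beta_{K},\beta_{N}$; this is the one place where the nontriviality of $E$ is essential. The remaining work is routine bookkeeping of the (identical) quantifier strings and elementary scalar--set algebra, for which the reduction to absolutely convex $B_{N}$ is used.
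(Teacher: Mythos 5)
Your proposal is correct and follows essentially the same route as the paper: the same trivial implication $(\condS)\Rightarrow(\condWS)$, the same derivation of $(\condQ)$ by testing the set inclusion of $(\condS)$ against a scalar-valued gauge, and the same converse obtained by applying $(\condWS)$ at a shifted base index $\ell$ (the paper's $n'$) and then absorbing the middle term via $(\condQ)$ with tolerance $\varepsilon/C_1$. The only cosmetic difference is in the forward direction, where the paper produces a functional $e'\in 2B_N^\circ$ with $|\langle e',e\rangle|\geq 1$ via the bipolar theorem, while you test with a continuous seminorm $p$ and the suprema $\beta_K,\beta_N$ --- an equivalent (and slightly more elementary) device, since $|\langle e',\cdot\rangle|$ is itself such a seminorm.
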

	
	\begin{proof}
	Suppose first that $(E, \mathscr{Z} )$ satisfies $(\condS)$. Obviously, $(E, \mathscr{Z} )$ satisfies $(\condWS)$. We now show that $\mathscr{Z}$ satisfies $(\condQ)$ and thus is acyclic. We may assume that each $B_{N}$ is absolutely convex, closed, and contains a non-zero element. Let $n \in \N$ be arbitrary. Choose $m \geq n$ according to $(\condS)$. Let $k \in \N$ be arbitrary. Choose $N \in \N$ according to $(\condS)$. Let $\varepsilon > 0$ be arbitrary. Condition $(\condS)$ yields that  there are $K \geq N$ and $C > 0$ such that 
$$
\|z\|_{Z_{m}} B_{N} \subseteq C \|z\|_{Z_{k}} B_{K} + \frac{\varepsilon}{2} \|z\|_{Z_{n}} B_{N} , \qquad \forall  z \in Z_n.
$$
 Fix $e \in B_N$ such that $2e \notin B_N$. By the bipolar theorem, there is $e' \in 2B^\circ_N$ such that $|\langle e',e\rangle| \geq 1$. Hence, for all $z \in Z_n$ 
\begin{eqnarray*}
\|z\|_{Z_{m}} \leq  |\langle e',\|z\|_{Z_{m}}e\rangle| &\leq&  C\sup_{e_1 \in B_K}|\langle e',e_1 \rangle | \|z\|_{Z_{k}}  + \frac{\varepsilon}{2}\sup_{e_2 \in B_N}|\langle e',e_2 \rangle |  \|z\|_{Z_{n}}  \\
&\leq& C' \|z\|_{Z_{k}}  + \varepsilon  \|z\|_{Z_{n}},
\end{eqnarray*}
 where $C' = C\sup_{e_1 \in B_K}|\langle e',e_1 \rangle|$. Conversely, suppose that $(E, \mathscr{Z})$ satisfies $(\condWS)$ and that $\mathscr{Z}$ is acyclic and thus satisfies $(\condQ)$. 
 Let $n \in \N$ be arbitrary. Choose $n' \geq n$ according to  $(\condQ)$ and $m \geq n'$ according to $(\condWS)$. Let $k \geq m$ be arbitrary. Choose  $N \in \N$ according to $(\condWS)$. Let $M \geq N$ and $\varepsilon > 0$ be arbitrary. Condition  $(\condWS)$ implies that there is $K \geq M$ and $C >0$ such that
 $$
 \|z\|_{Z_{m}} B_{M} \subseteq C( \|z\|_{Z_{k}} B_{K} + \|z\|_{Z_{n'}} B_{N} ), \qquad \forall  z \in Z_{n'}.
$$
 By  $(\condQ)$, there is $C' > 0$ such that
 $$
 \|z\|_{Z_{n'}} \leq C' \|z\|_{Z_{k}} + \frac{\varepsilon}{C} \|z\|_{Z_{n}}, \qquad  \forall z \in Z_{n}.
 $$
 Hence,
 $$
 \|z\|_{Z_{m}} B_{M} \subseteq C(1 +C')\|z\|_{Z_{k}} B_{K} + \varepsilon\|z\|_{Z_{n}} B_{N}, \qquad  \forall z \in Z_{n}.
$$	\end{proof}
A bounded set $B$ in a lcHs $E$ is called \emph{fundamental} if for every bounded set $D \subseteq E$ there is $C > 0$ such that $D \subseteq CB$. A quasibarrelled lcHs has a fundamental bounded set if and only if it is normable. However, there exist non-normable lcHs with a fundamental bounded set, e.g., infinite-dimensional normed spaces endowed with the weak topology.

\begin{corollary}
		\label{l:S=acyclic}
		Let $E$ be a  lcHs with a fundamental bounded set and let $\mathscr{Z}$ be an inductive spectrum of Banach spaces.  Then, $(E, \mathscr{Z})$ satisfies $(\condS)$ if and only if $\mathscr{Z}$ is acyclic.
	\end{corollary}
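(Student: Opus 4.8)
The plan is to read off the corollary from Proposition \ref{l:S=WS+acyclic}, which already decomposes condition $(\condS)$ into two ingredients: the weaker condition $(\condWS)$ together with acyclicity of $\mathscr{Z}$. Since that proposition asserts that $(E,\mathscr{Z})$ satisfies $(\condS)$ if and only if $(E,\mathscr{Z})$ satisfies $(\condWS)$ and $\mathscr{Z}$ is acyclic, it suffices to prove that, under the hypothesis that $E$ has a fundamental bounded set, the condition $(\condWS)$ holds for \emph{every} inductive spectrum of Banach spaces $\mathscr{Z}$. Once this is established, the stated equivalence ``$(\condS)$ if and only if $\mathscr{Z}$ is acyclic'' is immediate.

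To set things up, I would fix a fundamental bounded set $B$ of $E$. Replacing $B$ by its absolutely convex hull, which is again bounded (the convex balanced hull of a bounded set is bounded, as a continuous seminorm attains the same supremum on a set and on its absolutely convex hull) and again fundamental, I may assume $B$ is absolutely convex. As noted right after the definition, $(\condS)$ and $(\condWS)$ do not depend on the choice of fundamental increasing sequence of bounded sets, so I am free to work with the constant sequence $B_{N} := B$, $N \in \N$, which is fundamental and trivially increasing. With this choice all the set inclusions occurring in $(\condWS)$ involve only the single set $B$.

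The verification of $(\condWS)$ is then immediate: for the constant sequence the condition amounts to the assertion that for every $n \in \N$ there is $m \geq n$ so that for all $k \geq m$ there exists $C > 0$ with
\[
\|z\|_{Z_{m}} B \subseteq C \left( \|z\|_{Z_{k}} B + \|z\|_{Z_{n}} B \right), \qquad \forall z \in Z_{n},
\]
the indices $N, M, K$ playing no role. Taking $m = n$ and $C = 1$ this holds trivially, since $B$ is absolutely convex gives $0 \in \|z\|_{Z_{k}} B$ and hence $\|z\|_{Z_{n}} B \subseteq \|z\|_{Z_{k}} B + \|z\|_{Z_{n}} B$. Thus $(\condWS)$ is automatic, and combining this with Proposition \ref{l:S=WS+acyclic} yields the corollary.

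There is essentially no obstacle here: all the genuine content resides in Proposition \ref{l:S=WS+acyclic}, and the present argument only records that a fundamental bounded set collapses the ``$\varepsilon$-slack'' mechanism of $(\condWS)$ to triviality. The only points deserving a word of care are that the absolutely convex hull of the fundamental bounded set remains bounded and fundamental, and the appeal to the independence of $(\condWS)$ from the chosen fundamental sequence; both are routine.
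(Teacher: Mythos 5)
Your argument is correct and is essentially the paper's proof: the paper likewise observes that a fundamental bounded set makes $(\condWS)$ automatic and then invokes Proposition \ref{l:S=WS+acyclic}. One small repair: the constant sequence $B_{N} = B$ need not be a fundamental increasing sequence of bounded sets in the paper's sense (which demands $D \subseteq B_{N}$ with no multiplicative constant, whereas a fundamental bounded set only gives $D \subseteq CB$), so take $B_{N} = NB$ instead; the verification stays just as trivial, since with $m = n$, $K = M$ and $C = M$ one has $\|z\|_{Z_{m}} M B \subseteq C \left( \|z\|_{Z_{k}} K B + \|z\|_{Z_{n}} N B \right)$ for all $z \in Z_{n}$, using $0 \in \|z\|_{Z_{k}} K B$ and $N \geq 1$.
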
	
	\begin{proof}
Since $E$ has a fundamental bounded set, $(E, \mathscr{Z})$ clearly satisfies $(\condWS)$. Hence, the result follows from Proposition \ref{l:S=WS+acyclic}. 
	\end{proof}
	

	\begin{corollary}
		\label{c:S=WSIfENotBanach}
		Let $E$ be a lcHs with a  fundamental increasing sequence of bounded sets $(B_{N})_{N \in \N}$  that does not have a fundamental bounded set and let $\mathscr{Z} = (Z_n)_{n\in\N}$ be an inductive spectrum of Banach spaces. Then, $(E, \mathscr{Z})$ satisfies $(\condS)$ if and only if $(E, \mathscr{Z})$ satisfies $(\condWS)$. 
	\end{corollary}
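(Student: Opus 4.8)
The plan is to reduce the corollary to Proposition \ref{l:S=WS+acyclic}. The implication $(\condS) \Rightarrow (\condWS)$ is immediate and holds for any $E$ (as already observed in that proposition), so only the converse needs the hypothesis on $E$. By Proposition \ref{l:S=WS+acyclic}, $(E,\mathscr{Z})$ satisfies $(\condS)$ precisely when it satisfies $(\condWS)$ and $\mathscr{Z}$ is acyclic; hence it suffices to show that $(\condWS)$ together with the absence of a fundamental bounded set in $E$ forces $\mathscr{Z}$ to be acyclic, i.e.\ to satisfy $(\condQ)$.

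First I would translate the hypothesis on $E$ into a usable form. As usual we may assume each $B_N$ is absolutely convex and closed. Since $(B_N)_{N}$ is a fundamental increasing sequence of bounded sets, $E$ has a fundamental bounded set if and only if some $B_N$ absorbs every $B_M$; therefore the absence of a fundamental bounded set means that for every $N$ there is $M \geq N$ with $B_M \not\subseteq C B_N$ for all $C > 0$. By the bipolar theorem this provides, for each prescribed $L > 0$, a functional $e' \in B_N^{\circ}$ and a point $e \in B_M$ with $|\langle e', e\rangle| > L$ while $|\langle e', \cdot\rangle| \leq 1$ on $B_N$.

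Now I would derive $(\condQ)$. Fix $n$ and take $m \geq n$ from $(\condWS)$. Given arbitrary $k \geq m$ and $\varepsilon > 0$, let $N$ be the index furnished by $(\condWS)$, use the hypothesis on $E$ to pick $M \geq N$ with $B_M \not\subseteq C B_N$ for all $C$, and let $K \geq M$, $C_0 > 0$ be as in $(\condWS)$, so that $\|z\|_{Z_m} B_M \subseteq C_0(\|z\|_{Z_k} B_K + \|z\|_{Z_n} B_N)$ for all $z \in Z_n$. Set $L = C_0/\varepsilon$ and choose $e' \in B_N^{\circ}$, $e \in B_M$ with $|\langle e', e\rangle| > L$. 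Applying the inclusion to $\|z\|_{Z_m} e$, writing this as $C_0(\|z\|_{Z_k} b_K + \|z\|_{Z_n} b_N)$ with $b_K \in B_K$, $b_N \in B_N$, and testing against $e'$ using $|\langle e', b_N\rangle| \leq 1$ and $|\langle e', b_K\rangle| \leq S_K := \sup_{b \in B_K}|\langle e', b\rangle| < \infty$ yields
$$ L\,\|z\|_{Z_m} < |\langle e', e\rangle|\,\|z\|_{Z_m} \leq C_0 S_K \|z\|_{Z_k} + C_0 \|z\|_{Z_n}, $$
and dividing by $L = C_0/\varepsilon$ gives $\|z\|_{Z_m} \leq \varepsilon S_K \|z\|_{Z_k} + \varepsilon \|z\|_{Z_n}$, which is $(\condQ)$ with $C = \varepsilon S_K$ (replaced by any positive constant should $S_K = 0$).

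The main obstacle is exactly this last step: converting the vector inclusion of $(\condWS)$ — in which the $B_N$-term carries the same constant $C_0$ as the $B_K$-term — into a scalar inequality whose $\|z\|_{Z_n}$-coefficient can be made arbitrarily small. This is where the absence of a fundamental bounded set is indispensable, since it supplies, for any prescribed $L$, a functional bounded by $1$ on $B_N$ yet large on some $e \in B_M$; dividing by $|\langle e', e\rangle| > L$ then shrinks the $B_N$-contribution to order $\varepsilon$ while merely scaling the $B_K$-contribution by the fixed finite number $S_K$. One must also keep the quantifier order consistent with $(\condQ)$ — the index $m$ has to be selected before $k$ and $\varepsilon$ — which the argument respects, as $m$ is taken directly from $(\condWS)$ and every later choice depends only on $n$, $k$, and $\varepsilon$.
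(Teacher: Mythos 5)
Your proposal is correct and follows essentially the same route as the paper: reduce via Proposition \ref{l:S=WS+acyclic} to showing that $(\condWS)$ plus the absence of a fundamental bounded set forces $(\condQ)$, use the failure of $B_N$ to absorb some $B_M$ together with the bipolar theorem to produce a functional that is small on $B_N$ but large at a point of $B_M$, and test the $(\condWS)$ inclusion against it. The only difference is cosmetic normalization (you take $e' \in B_N^{\circ}$ with $|\langle e',e\rangle| > C_0/\varepsilon$ and divide, whereas the paper rescales to $e' \in \frac{\varepsilon}{C}B_N^{\circ}$ with $|\langle e',e\rangle| \geq 1$), and your quantifier bookkeeping matches the paper's.
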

	
	\begin{proof}
		By Proposition \ref{l:S=WS+acyclic}, it suffices to show that $(\condWS)$ implies that $\mathscr{Z}$ satisfies $(\condQ)$ and thus is acyclic. We may suppose that each $B_N$ is  absolutely convex and closed. Let $n \in \N$ be arbitrary. Choose $m \geq n$ according to $(\condWS)$. Let $k \in \N$ and $\varepsilon >0$ be arbitrary. Choose $N \in \N$ according to $(\condWS)$. Since $B_N$ is not a fundamental bounded set in $E$, there is $M \geq N$ such that for all $r>0$ it holds that $B_M \not \subseteq r B_N$. Condition $(\condWS)$ implies that  there are $K \geq M$ and $C > 0$ such that 
$$
\|z\|_{Z_{m}} B_{M} \subseteq C(  \|z\|_{Z_{k}} B_{K} + \|z\|_{Z_{n}} B_{N} ) , \qquad \forall  z \in Z_n.
$$
 Fix $e \in B_M$ such that $e \notin \frac{C}{\varepsilon}B_N$. By the bipolar theorem, there is $e' \in \frac{\varepsilon}{C}B^\circ_N$ such that $|\langle e',e\rangle| \geq 1$. Hence, for all $z \in Z_n$
\begin{eqnarray*}
\|z\|_{Z_{m}} \leq  |\langle e',\|z\|_{Z_{m}}e\rangle| &\leq&  C(\sup_{e_1 \in B_K}|\langle e',e_1 \rangle | \|z\|_{Z_{k}}  + \sup_{e_2 \in B_N}|\langle e',e_2 \rangle |  \|z\|_{Z_{n}})  \\
&\leq& C' \|z\|_{Z_{k}}  + \varepsilon  \|z\|_{Z_{n}},
\end{eqnarray*}
 where $C' = C\sup_{e_1 \in B_K}|\langle e',e_1 \rangle|$.	\end{proof}

\subsection{Statement of the main result}	 We start with the following fundamental definition.

\begin{definition}
	Let $E$ be a lcHs. An $(LB)$-space $Z$ is said to be \emph{weakly $E$-acyclic} if  for every short exact sequence
					\begin{equation}
				\label{ses}	
				 \SES{X}{Y}{Z}{\iota}{Q} 
				 \end{equation}
				of $(LB)$-spaces  it holds that the map
				\[
				\iota_*: L(Y,E) \to L(X, E), \, T \mapsto T \circ \iota
				\]
				is surjective.				
	\end{definition}		
	
	\begin{remark}\label{remark-wa}
	(i) The term weakly $E$-acyclic stems from the fact that an $(LB)$-space $Z$ is weakly acyclic if and only if it is weakly $\C$-acyclic (see Proposition \ref{t:MainExtensionThm2} below). \\
	\noindent (ii)  Let $E$ be a lcHs and let $Z$ be an $(LB)$-space. Then, for every exact sequence \eqref{ses} of $(LB)$-spaces it holds that
	\[\begin{tikzcd}
					0 \arrow{r} & L(Z, E) \arrow{r}{Q_*} & L(Y, E) \arrow{r}{\iota_*} & L(X, E) 
				\end{tikzcd}
			\]
			is exact: the injectivity of ${Q}_*$ is clear, while the exactness at $ L(Y, E)$ follows from the fact that $Q$ is open, which in turn is a consequence of  De Wilde's open mapping theorem. Hence, $Z$ is weakly $E$-acyclic if and only if 
			 for every exact sequence \eqref{ses} of $(LB)$-spaces  it holds that the sequence
			 \[ \SES{L(Z, E)}{L(Y, E)}{L(X, E)}{{Q}_*}{{\iota}_*}  \]
				is exact.
					\end{remark}


We are ready to formulate the main result of this section, its proof is given in the next two subsections.
	\begin{theorem}
		\label{t:MainExtensionThm}
		Let $E$ be a locally complete lcHs with a  fundamental sequence of bounded sets and let $Z$ be an $(LB)$-space. Suppose that one of the following assumptions is satisfied:
			\begin{itemize} 
			\item[(a)] $E \cong k^{\infty}(A)$ and $E$ is non-normable.
			\item[(b)]   $E$  is an infinite-dimensional $(LN)$-space.
			\item[(c)]  $Z \cong k^{1}(A)$.
			\item[(d)]  $Z$ is an $(LN)$-space.
			\end{itemize}
		 Then, $Z$ is weakly $E$-acyclic if and only if $(E, Z)$ satisfies $(\condS)$.
	\end{theorem}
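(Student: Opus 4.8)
The plan is to prove the equivalence through two successive reductions, mirroring the two-step scheme announced at the start of this section. Throughout, fix an inductive spectrum of Banach spaces $\mathscr{Z} = (Z_{n})_{n \in \N}$ with $Z = \Ind \mathscr{Z}$, and view $L(Z,E)$ as the projective spectrum $(L(Z_{n}, E), \varrho^{n}_{m})$, where $\varrho^{n}_{m}$ is restriction of operators and $L(Z_{n},E)$ carries the topology of uniform convergence on the unit ball $B_{Z_{n}}$ of $Z_{n}$. Since a thread $(T_{n})_{n}$ in this spectrum is precisely a linear map on $\bigcup_{n} Z_{n}$ that is continuous on each step, one has $\Proj L(Z,E) = L(Z,E)$. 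The first reduction will be: $Z$ is weakly $E$-acyclic if and only if $\Proj^{1} L(Z,E) = 0$. The second will be: $\Proj^{1} L(Z,E) = 0$ if and only if $(E,Z)$ satisfies $(\condS)$. Combining the two yields the theorem.

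First I would treat the reduction to the derived limit. For the implication that weak $E$-acyclicity forces $\Proj^{1} L(Z,E) = 0$, I would feed the injective resolution \eqref{eq:InjectiveResolution} of $\mathscr{Z}$ — an exact sequence of $(LB)$-spaces with outer and middle term $\bigoplus_{n} Z_{n}$ — into the definition. Under the identification $L(\bigoplus_{n} Z_{n}, E) = \prod_{n} L(Z_{n}, E)$, the map $\iota_{*} \colon T \mapsto T \circ d$ is exactly the defining map $\Psi$ of the derived limit, so the surjectivity of $\iota_{*}$ supplied by weak $E$-acyclicity is literally $\Proj^{1} L(Z,E) = 0$; this direction needs no extra hypotheses. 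For the converse I would start from an arbitrary exact sequence \eqref{ses}, pass via Grothendieck's factorization theorem to equivalent Banach spectra with $\iota(X_{n}) \subseteq Y_{n}$ and $Q(Y_{n}) \subseteq Z_{n}$, and apply $L(-,E)$ levelwise to obtain a morphism $\iota_{*} \colon (L(Y_{n},E)) \to (L(X_{n},E))$ whose kernel is identified with $(L(Z_{n},E))$ by Remark \ref{remark-wa}(ii). Proposition \ref{t:AbstractMittagLeffler} then turns $\Proj^{1} L(Z,E) = 0$ into surjectivity of $\iota_{*}$, provided its hypothesis holds: that each $E$-valued operator on the Banach subspace $X_{n}$ (restricted from $X_{m}$) extends to $Y_{n}$. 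This Banach-level extension property is the \emph{mild assumption} that the four cases (a)--(d) are designed to supply.

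Next I would characterize $\Proj^{1} L(Z,E) = 0$ via Proposition \ref{P:CondProj1=0}, applied to $\mathscr{L} = (L(Z_{n},E))$. Here each term is itself the $(LB)$-space $\Ind_{N} L(Z_{n}, E_{B_{N}})$ — local completeness makes every $E_{B_{N}}$ a Banach space — with unit balls $\mathcal{B}_{n,N} = \{ T : T(B_{Z_{n}}) \subseteq B_{N} \}$. Unwinding \eqref{eq:SuffCondProj1=0X} and \eqref{eq:NeccCondProj1=0X} for $\mathscr{L}$ produces inclusions between restricted operator balls $\varrho^{n}_{m}(\mathcal{B}_{m,M})$, and the task is to match these with the single-vector inclusions defining $(\condS)$ and $(\condWS)$. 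The pointwise half is routine: evaluating $S_{\mid Z_{n}}$, with $S(B_{Z_{m}}) \subseteq B_{M}$, at $z \in Z_{n}$ gives $Sz \in \|z\|_{Z_{m}} B_{M}$; the reverse half produces, for each $z$ and each supporting functional, an operator attaining the relevant norm. This is precisely where the dualization of \cite{V-FuncExt1Frechet} enters and where the four cases require separate treatment.

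Finally, since Proposition \ref{P:CondProj1=0} only sandwiches $\Proj^{1} L(Z,E) = 0$ between the strong condition (matching $(\condS)$) and the weak one (matching $(\condWS)$), I would close the remaining gap case by case. In (a) and (b) the space $E$ has no fundamental bounded set (in (b) because an infinite-dimensional nuclear $(LB)$-space is non-normable and quasibarrelled; in (a) by the explicit structure of a non-normable $k^{\infty}(A)$), so $(\condS) \Leftrightarrow (\condWS)$ by Corollary \ref{c:S=WSIfENotBanach}. In (c) and (d) the needed acyclicity of $\mathscr{Z}$ is available — automatically for an $(LN)$-space in (d), and in (c) extracted from $\Proj^{1} L(Z,E) = 0$ using the explicit $\ell^{1}$-structure — so that Proposition \ref{l:S=WS+acyclic} upgrades $(\condWS)$ to $(\condS)$. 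The hard part will be the operator-to-vector translation of the previous paragraph: the inclusions coming out of Proposition \ref{P:CondProj1=0} are pointwise in $z$, whereas $(\condS)$ demands a single decomposition of the operator $S_{\mid Z_{n}}$, so one must upgrade a family of pointwise splittings of the vectors $Sz$ to a uniform splitting of the operator. This selection/uniformity problem can only be solved by exploiting the special structure of $E$ (cases (a), (b)) or of $Z$ (cases (c), (d)), and I expect it to be the most delicate and genuinely case-dependent step.
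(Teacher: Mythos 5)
Your plan reproduces the paper's proof architecture exactly: your Step 1 is the paper's Theorem \ref{t:MittagLeffler} (weak $E$-acyclicity $\Leftrightarrow$ $\Proj^{1}\mathscr{L}(Z,E)=0$, with the Banach-level extension hypothesis of Proposition \ref{t:AbstractMittagLeffler} supplied in cases (a)--(d) by factoring the inclusions through injective ($\ell^{\infty}$) resp.\ projective ($\ell^{1}$) Banach spaces, Corollary \ref{c:SpecificCasesAreLocallyEAcyclic}), and your Step 2 is the paper's Lemmas \ref{l:Proj1=0ImpliesS} and \ref{l:Siss} combined with the $(\condWS)\Rightarrow(\condS)$ upgrades via Corollary \ref{c:S=WSIfENotBanach} in cases (a), (b), Proposition \ref{l:S=WS+acyclic} in case (d), and in case (c) the acyclicity of weakly acyclic $k^{1}(A)$-spaces (the paper routes this through Corollary \ref{c:wa} and \cite[Theorem 5.6]{V-RegPropLFSp} rather than extracting it directly from $\Proj^{1}=0$). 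The steps you defer --- the pullback construction verifying the Mittag--Leffler hypothesis and the four case-dependent upgrades from pointwise vector splittings to operator splittings (coordinate functionals plus Lemma \ref{l:CondSImplications} for (a), unit vectors of $\ell^{1}$ for (c), singular value decompositions for (b) and (d)) --- are precisely the paper's technical lemmas, and you have located them correctly.
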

	
\noindent 
\begin{remark}\label{remWSS}
\noindent (i) If  (a), (b), or (d) holds, then $(E, Z)$ satisfies $(\condS)$ if and only if it satisfies $(\condWS)$. For (a) and (b) this follows from Corollary \ref{c:S=WSIfENotBanach}, while for (d) this is a consequence of Proposition \ref{l:S=WS+acyclic} and the fact that every $(LN)$-space is acyclic. \\
\noindent (ii) The reason that we assume $E$ to be non-normable in (a) and $E$ to be infinite-dimensional in (b) is to exclude the case  $E \cong \ell^{\infty}$. These spaces will be treated separately in  Proposition \ref{t:MainExtensionThm2} below.
\end{remark}

\subsection{Weak $E$-acyclicity and the  derived projective limit}\label{subsect-1}	 Let $E$ be a lcHs and let $\mathscr{Z} = (Z_{n})_{n \in \N}$ be an inductive spectrum of Banach spaces. We denote by  $\mathscr{L}(\mathscr{Z}, E)$  the projective spectrum $(L(Z_{n}, E), \varrho_m^n)$ with
			\[ \varrho_m^n: L(Z_m, E) \to L(Z_{n}, E), \, T \mapsto T_{\mid Z_n}.  \]
 Note that 		
\begin{equation}
\label{identification}
L( \Ind \mathscr{Z} , E) \to \Proj \mathscr{L}(\mathscr{Z}, E), \, T \mapsto (T_{\mid Z_n})_{n \in \N}
\end{equation}
is an isomorphism of vector spaces. If  $\mathscr{Z}$ and  $\mathscr{Z}'$ are two equivalent inductive spectra of Banach spaces, then the projective spectra $\mathscr{L}(\mathscr{Z}, E)$ and $\mathscr{L}(\mathscr{Z}', E)$ are also equivalent.
Let $Z = \Ind \mathscr{Z}$ be an $(LB)$-space, with  $\mathscr{Z}$ being an inductive spectrum of Banach spaces.  We write $\Proj^{1} \mathscr{L}(Z, E) = 0$ to indicate that $\Proj^{1} \mathscr{L}(\mathscr{Z}, E) = 0$.
Since all inductive spectra of Banach spaces defining $Z$ are equivalent, this definition does not depend on the choice of the spectrum $ \mathscr{Z}$ of Banach spaces defining $Z$.

Our goal in this subsection is to characterize weak $E$-acyclicity of $Z$ by $\Proj^{1} \mathscr{L}(Z, E) = 0$. We start with the following result.

	\begin{proposition}
		\label{p:EAcyclicImpliesProj1=0}
		Let $E$ be a lcHs and $Z$ an $(LB)$-space. If $Z$ is weakly $E$-acyclic, then $\Proj^{1} \mathscr{L}(Z, E) = 0$.
	\end{proposition}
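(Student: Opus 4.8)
The goal is to prove that if $Z$ is weakly $E$-acyclic, then $\Proj^1 \mathscr{L}(Z,E) = 0$. The key idea is to construct a specific exact sequence of $(LB)$-spaces of the form \eqref{ses} — indeed the canonical one associated to the inductive spectrum $\mathscr{Z} = (Z_n)_{n\in\N}$ — and then exploit weak $E$-acyclicity for this particular sequence to extract vanishing of $\Proj^1$. The natural candidate is the injective resolution \eqref{eq:InjectiveResolution} of $\mathscr{Z}$, which provides a topologically exact sequence
\[
	\SES{\bigoplus_{n\in\N} Z_n}{\bigoplus_{n\in\N} Z_n}{Z}{d}{\sigma}.
\]
Here $\bigoplus_{n\in\N} Z_n$ is again an $(LB)$-space (the inductive limit of the Banach spaces $\bigoplus_{j\le n} Z_j$), and the map $d$ is a topological embedding precisely when $\mathscr{Z}$ is acyclic. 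First I would verify that this sequence is indeed an exact sequence of $(LB)$-spaces in the sense required — exactness is part of the definition of the injective resolution, and one should check that the maps are continuous and that the space on the left is an $(LB)$-space.

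\emph{From weak $E$-acyclicity to a lifting statement.}
Applying $L(\,\cdot\,, E)$ to this resolution and using weak $E$-acyclicity, I would deduce that
\[
	d_* : L\!\left(\textstyle\bigoplus_{n\in\N} Z_n,\, E\right) \to L\!\left(\textstyle\bigoplus_{n\in\N} Z_n,\, E\right), \quad T \mapsto T \circ d
\]
is surjective. The next step is to translate this surjectivity, via the identification \eqref{identification} of $L(\Ind\mathscr{Z}, E)$ with $\Proj\mathscr{L}(\mathscr{Z},E)$ and the analogous identification for the direct sum, into the statement that the map $\Psi$ defining $\Proj^1$ is surjective. Concretely, there is a canonical isomorphism $L(\bigoplus_{n} Z_n, E) \cong \prod_{n} L(Z_n, E)$, and under this isomorphism the map $d_*$ (coming from $d((x_n)) = (x_n - x_{n-1})$) corresponds exactly to the map
\[
	\Psi : \prod_{n\in\N} L(Z_n,E) \to \prod_{n\in\N} L(Z_n,E), \quad (T_n)_n \mapsto (T_n - \varrho^n_{n+1}(T_{n+1}))_n,
\]
up to an index shift and sign convention. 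I would carry out this bookkeeping carefully, matching the restriction maps $\varrho^n_m$ of $\mathscr{L}(\mathscr{Z},E)$ against the components of $d$. Surjectivity of $d_*$ then reads as surjectivity of $\Psi$, which is exactly $\Proj^1 \mathscr{L}(\mathscr{Z},E) = (\prod_n L(Z_n,E))/\im\Psi = 0$.

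\emph{The main obstacle.}
The technical heart — and the step I expect to require the most care — is the identification of $d_*$ with $\Psi$. One must correctly describe the dual of the direct-sum map $d$: since $\bigoplus_{n} Z_n$ carries the locally convex direct sum topology, a continuous linear map into $E$ is precisely a family $(T_n)_{n\in\N}$ with $T_n \in L(Z_n, E)$ (no growth constraint, because the sum is locally finite on each bounded set), giving $L(\bigoplus_n Z_n, E) \cong \prod_n L(Z_n, E)$. Then precomposition with $d((x_n)) = (x_n - x_{n-1})$ sends a family $(S_n)$ to the family whose $n$-th component pairs with $x_n$ via $S_n - S_{n+1}|_{Z_n}$, which is $(S_n - \varrho^n_{n+1} S_{n+1})$. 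The only genuinely subtle point is ensuring that the restriction maps in $\mathscr{L}(\mathscr{Z},E)$ and the inclusion-induced identifications match up so that the surjectivity transfers exactly; once this correspondence is pinned down, surjectivity of $d_*$ yields $\im\Psi = \prod_n L(Z_n,E)$ and hence $\Proj^1\mathscr{L}(Z,E) = 0$.
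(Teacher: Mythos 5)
Your proposal is correct and follows essentially the same route as the paper: apply weak $E$-acyclicity to the injective resolution \eqref{eq:InjectiveResolution} to get surjectivity of $d_*$, then transport this through the canonical isomorphism $L(\bigoplus_n Z_n, E) \cong \prod_n L(Z_n,E)$, under which $d_*$ becomes exactly the map $\Psi$ (your computation $(S_n)_n \mapsto (S_n - S_{n+1\mid Z_n})_n$ is the paper's identity $\Psi = \Phi^{-1}\circ d_*\circ\Phi$, with no sign or shift discrepancy). One tiny caveat: the resolution is exact with $\sigma$ open, but not in general \emph{topologically} exact ($d$ is a topological embedding only when $\mathscr{Z}$ is acyclic, as you yourself note); this is harmless since weak $E$-acyclicity is defined for merely exact sequences.
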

	
	\begin{proof}
		Let $Z = \Ind \mathscr{Z}$, with  $\mathscr{Z} = (Z_n)_{n \in \N}$  an inductive spectrum of Banach spaces.  We need to show that the map
		$$
		\Psi : \prod_{n \in \N} L(Z_{n},E) \rightarrow \prod_{n \in \N} L(Z_{n},E) , \, (T_{n})_{n \in \N} \mapsto (T_{n} - T_{n+1\mid Z_n})_{n \in \N} 
		$$
		is surjective. Consider the injective resolution \eqref{eq:InjectiveResolution} of $\mathscr{Z}$. Since $Z$ is weakly $E$-acyclic, the map
		$
		d_\ast: L(\bigoplus_{n \in \N} Z_{n}, E) \to L(\bigoplus_{n \in \N} Z_{n}, E)
		$
		is surjective. Let
		$$
		\Phi: \prod_{n \in \N} L(Z_{n},E) \to  L(\bigoplus_{n \in \N} Z_{n}, E), \, (T_{n})_{n \in \N}  \mapsto ( (x_{n})_{n \in \N}  \mapsto \sum_{n \in \N} T_n(x_n))
		$$
		be the natural isomorphism and note that $\Psi = \Phi^{-1} \circ d_* \circ \Phi$. As $d_*$ is surjective,  we find that also  $\Psi$ is surjective.	
			\end{proof}
	
We now look for conditions on $E$ and $Z$ ensuring that the converse of Proposition \ref{p:EAcyclicImpliesProj1=0} also holds true. 
To this end, we introduce the following definition.

	\begin{definition}
		\label{d:locallyEacyclic}
		Let $E$ be a lcHs.
		\begin{itemize}
		\item[(i)] A Banach space $H$ is called \emph{weakly $E$-acyclic in the category of Banach spaces} if for  every exact sequence
				\[ \SES{F}{G}{H}{\iota}{} \]
			of Banach spaces the map $\iota_*: L(G, E) \to L(F, E)$ is surjective.
			
		\item[(ii)] An $(LB)$-space $Z$ is called \emph{locally weakly $E$-acyclic} if $Z = \Ind \mathscr{Z}$ for some inductive spectrum of Banach spaces $\mathscr{Z} = (Z_{n})_{n \in \N}$ such that each inclusion map $i: Z_n \to Z_{n+1}$ factors through a Banach space $H$ that is weakly $E$-acyclic in the category of Banach spaces, i.e.,  that there are $a \in L(Z_n, H)$ and $b \in L(H, Z_{n+1})$ such that $i = b \circ a$.	
		\end{itemize}		
	\end{definition}
	
The main result of this subsection may now be formulated as follows.
	\begin{theorem}
		\label{t:MittagLeffler}
		Let $E$ be a lcHs and let $Z$ be a locally weakly $E$-acyclic $(LB)$-space. Then, $Z$ is weakly $E$-acyclic if and only if $\Proj^{1} \mathscr{L}(Z, E) = 0$.
	\end{theorem}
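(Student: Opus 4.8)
The forward implication is Proposition~\ref{p:EAcyclicImpliesProj1=0}, which holds without any acyclicity hypothesis; so the entire content is the reverse direction, and the plan is to reduce surjectivity of the extension map $\iota_*$ to surjectivity of $\Proj f$ for a suitable morphism of projective spectra, then invoke the Abstract Mittag--Leffler Proposition~\ref{t:AbstractMittagLeffler}. I would fix an exact sequence $\SES{X}{Y}{Z}{\iota}{Q}$ of $(LB)$-spaces and write $Z = \Ind \mathscr{Z}$ with $\mathscr{Z}=(Z_n)$ the spectrum witnessing local weak $E$-acyclicity, so each inclusion $Z_n \to Z_{n+1}$ factors as $b\circ a$ through a Banach space $H_n$ weakly $E$-acyclic in the Banach category. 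The first task is to pull back this inductive structure along $\iota$ and $Q$ to obtain compatible inductive spectra $\mathscr{X}=(X_n)$, $\mathscr{Y}=(Y_n)$ of Banach spaces together with, for each $n$, a short exact sequence of Banach spaces $\SES{X_n}{Y_n}{Z_n}{\iota_n}{Q_n}$; concretely I would set $Y_n = Q^{-1}(Z_n)$ with an appropriate Banach disk topology and $X_n = Y_n \cap \iota(X)$, using local completeness of the $(LB)$-spaces and Grothendieck's factorization theorem to arrange that the inclusions are continuous and the spectra are equivalent to the original defining spectra.

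**The two spectra and the morphism.** Applying $L(\,\cdot\,,E)$ to these pointwise sequences yields, via Remark~\ref{remark-wa}(ii) and De Wilde's open mapping theorem, a morphism of projective spectra
\[
  f = (\iota_n^{\,*})_{n\in\N} : \mathscr{L}(\mathscr{Y},E) \longrightarrow \mathscr{L}(\mathscr{X},E),\qquad \iota_n^{\,*}:L(Y_n,E)\to L(X_n,E).
\]
Under the identification \eqref{identification}, $\Proj \mathscr{L}(\mathscr{Y},E) \cong L(Y,E)$ and $\Proj \mathscr{L}(\mathscr{X},E) \cong L(X,E)$, and $\Proj f$ is exactly the extension map $\iota_*$. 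The kernel of $f$ is the spectrum $(\ker \iota_n^{\,*})_{n}$; since $Q_n$ is surjective with $\ker = \im\,\iota_n$, we have $\ker \iota_n^{\,*} = \{T\in L(Y_n,E): T_{\mid X_n}=0\} \cong L(Z_n,E)$, so the kernel spectrum is isomorphic to $\mathscr{L}(\mathscr{Z},E)$. The hypothesis $\Proj^1 \mathscr{L}(Z,E)=0$ therefore gives $\Proj^1(\ker f)=0$, which is precisely one of the two inputs of Proposition~\ref{t:AbstractMittagLeffler}.

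**The range condition — the main obstacle.** The remaining, and I expect hardest, step is to verify the hypothesis of Proposition~\ref{t:AbstractMittagLeffler}: for every $n$ there is $m\geq n$ with $\varrho^n_m(L(X_m,E)) \subseteq \iota_n^{\,*}(L(Y_n,E))$, i.e.\ every $S\in L(X_m,E)$ restricts on $X_n$ to a map that extends through $\iota_n$ to $Y_n$. This is exactly where local weak $E$-acyclicity is used: factoring the inclusion $X_n\to X_m$ (equivalently $Z_n\to Z_{n+1}$) through a Banach space $H$ that is weakly $E$-acyclic in the Banach category lets me apply Definition~\ref{d:locallyEacyclic}(i) to the Banach-level short exact sequence, obtaining the required extension at a single fixed step rather than in the limit. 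I would carry this out by composing $S$ with the factoring maps $a\in L(X_n,H)$, $b$, solving the Banach extension problem for the map $H\to E$ against the inclusion of the corresponding Banach pieces, and then restricting back; the compatibility of the chosen spectra ensures the extended map lands in $L(Y_n,E)$ and agrees with $\varrho^n_m(S)$ on $X_n$. Once both hypotheses are in place, Proposition~\ref{t:AbstractMittagLeffler} yields surjectivity of $\Proj f = \iota_*$, and since the fixed exact sequence was arbitrary, $Z$ is weakly $E$-acyclic.
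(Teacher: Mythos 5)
Your high-level architecture coincides with the paper's: reduce to Proposition \ref{t:AbstractMittagLeffler} applied to the morphism $((\iota_n)_*)_{n\in\N}\colon \mathscr{L}(\mathscr{Y},E)\to\mathscr{L}(\mathscr{X},E)$, identify its kernel with $\mathscr{L}(\mathscr{Z},E)$, and locate the use of local weak $E$-acyclicity in the range condition \eqref{TP1}. However, your execution has two flaws, the second of which is a genuine gap at the heart of the proof. First, your recipe $Y_n = Q^{-1}(Z_n)$ cannot carry ``an appropriate Banach disk topology'': since $0\in Z_n$, the set $Q^{-1}(Z_n)$ contains all of $\ker Q = \iota(X)$, and if it admitted a Banach space topology with continuous inclusion into $Y$, then De Wilde's closed graph and open mapping theorems would force the arbitrary $(LB)$-space $X$ to be a Banach space. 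The paper's Lemma \ref{l:StepwiseSES} goes the other way: start from a defining spectrum $(Y_n)_{n\in\N}$ of $Y$ and set $Z_n = Q(Y_n)$ (quotient topology) and $X_n = \iota^{-1}(\ker Q\cap Y_n)$; the resulting spectrum of $Z$ need not be the witnessing one, but since all defining spectra of $Z$ are equivalent one can still find, for each $n$, some $m\geq n$ such that the inclusion $Z_n\to Z_m$ factors as $b\circ a$ through a Banach space $H$ that is weakly $E$-acyclic in the category of Banach spaces.

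Second, and decisively: your parenthetical claim that factoring $X_n\to X_m$ is ``equivalently'' factoring $Z_n\to Z_{n+1}$ is false. Local weak $E$-acyclicity factors the inclusion in the \emph{quotient} spectrum, with $a\in L(Z_n,H)$ and $b\in L(H,Z_m)$; it says nothing directly about the kernel spectrum $(X_n)_{n\in\N}$. Consequently your plan of ``composing $S$ with the factoring maps $a\in L(X_n,H)$, $b$, solving the Banach extension problem for the map $H\to E$'' does not parse: the domain of $a$ is $Z_n$, not $X_n$, and Definition \ref{d:locallyEacyclic}(i) does not extend maps \emph{out of} $H$ — it extends maps off the kernel of a short exact sequence of Banach spaces whose \emph{quotient} is $H$. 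No such sequence with kernel $X_m$ exists a priori; manufacturing it is the missing idea. The paper forms the pullback $G = \{(y,h)\in Y_m\times H \mid Q_m(y)=b(h)\}$, which fits into the exact sequence $0\to X_m \xrightarrow{\,j\,} G \xrightarrow{\,\pi\,} H\to 0$ with $j(x)=(\iota_m(x),0)$ and $\pi(y,h)=h$ (exactness uses exactness of \eqref{buildingspectra-3}). Weak $E$-acyclicity of $H$ then makes $j_*\colon L(G,E)\to L(X_m,E)$ surjective, so a given $T\in L(X_m,E)$ lifts to $\widetilde{T}\in L(G,E)$ with $\widetilde{T}\circ j = T$; finally the map $\tau\colon Y_n\to G$, $y\mapsto (y,a(Q_n(y)))$ — well defined precisely because $i=b\circ a$ — satisfies $j_{\mid X_n}=\tau\circ\iota_n$, whence $S=\widetilde{T}\circ\tau\in L(Y_n,E)$ gives $T_{\mid X_n}=S\circ\iota_n$. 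Without the pullback $G$ and the lift $\tau$, the range condition, and hence the reverse implication of the theorem, remains unproved.
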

	

We need the following lemma. 
	\begin{lemma}
		\label{l:StepwiseSES}
		Let 
			\[ \SES{X}{Y}{Z}{\iota}{Q}  \]
			be an exact sequence of $(LB)$-spaces. Let $\mathscr{Y} = (Y_n)_{n \in \N}$ be an inductive spectrum of Banach spaces such that $Y = \Ind \mathscr{Y}$. Then, there exist inductive spectra of Banach spaces $\mathscr{X} = (X_n)_{n \in \N}$ and $\mathscr{Z} =(Z_n)_{n \in \N}$  with $X = \Ind \mathscr{X}$ and $Z = \Ind \mathscr{Z}$ such that  the sequence
			\begin{equation}
			\label{buildingspectra}
			\SES{X_{n}}{Y_{n}}{Z_{n}}{\iota_{\mid X_n}}{Q_{\mid Y_n}} 
			\end{equation}
		is exact for each $n \in \N$.
	\end{lemma}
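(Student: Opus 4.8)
The plan is to take the evident candidates for the spectra and to identify $X$ with the closed subspace $\ker Q$ of $Y$. Since $(LB)$-spaces are ultrabornological and webbed, De Wilde's open mapping theorem shows that $Q$ is open (so $Z$ carries the quotient topology of $Y/\ker Q$) and that $\iota$ is a topological embedding onto the closed subspace $\ker Q = \iota(X)$; I henceforth identify $X$ with $\ker Q$ endowed with the subspace topology from $Y$. For each $n$ I put $X_n := \ker Q \cap Y_n$. As the inclusion $Y_n \to Y$ is continuous and $\ker Q$ is closed, $X_n$ is a closed subspace of the Banach space $Y_n$, hence itself Banach; I set $Z_n := Y_n / X_n$ with its quotient norm. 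Since $X_n = \ker(Q_{\mid Y_n})$, the map $Q_{\mid Y_n}$ induces a continuous injection of $Z_n$ onto $Q(Y_n) \subseteq Z$, and with these definitions the sequence \eqref{buildingspectra} is exact by construction, with $\iota_{\mid X_n}$ the inclusion $X_n \hookrightarrow Y_n$ and $Q_{\mid Y_n}$ the Banach quotient map. The inclusions $X_n \subseteq X_{n+1}$ and $Z_n \subseteq Z_{n+1}$ (the latter using $Y_n \cap X_{n+1} = X_n$) are continuous, so $\mathscr{X} = (X_n)_{n \in \N}$ and $\mathscr{Z} = (Z_n)_{n \in \N}$ are inductive spectra of Banach spaces with $\bigcup_n X_n = \ker Q = X$ and $\bigcup_n Z_n = Q(Y) = Z$ as sets.

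It then remains to verify that these spectra recover the original topologies, that is, $X = \Ind \mathscr{X}$ and $Z = \Ind \mathscr{Z}$. For $Z$ this follows because locally convex inductive limits commute with quotients. As $Q$ is open, the topology of $Z$ is the finest locally convex topology making $Q \colon Y \to Z$ continuous, while the inductive limit topology of $\Ind \mathscr{Z}$ is the finest locally convex topology making every map $Z_n \to Z$ continuous. I would compare the two: writing $Q_{\mid Y_n} = (Z_n \hookrightarrow Z) \circ (Y_n \to Z_n)$ and using the universal property of the Banach quotient $Y_n \to Z_n$ shows that each $Z_n \to Z$ is continuous for the quotient topology, so the quotient topology is coarser than the inductive one; conversely, since $Y = \Ind \mathscr{Y}$ and each composite $Y_n \to Z_n \to \Ind \mathscr{Z}$ is continuous, $Q$ is continuous into $\Ind \mathscr{Z}$, so the inductive topology is coarser than the quotient one. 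Hence they coincide and $Z = \Ind \mathscr{Z}$.

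For $X$ the naive version of this argument fails, and this is the main obstacle: locally convex inductive limits do \emph{not} commute with the passage to closed subspaces, so a priori the inductive limit topology of $\Ind \mathscr{X}$ could be strictly finer than the subspace topology inherited from $Y$. To circumvent this I would use that $X$ is itself an $(LB)$-space, say $X = \Ind (\widetilde{X}_j)_{j \in \N}$ for some inductive spectrum of Banach spaces, and prove that $(\widetilde{X}_j)_j$ and $(X_n)_n$ are equivalent. Two applications of Grothendieck's factorization theorem accomplish this: the continuous map $\widetilde{X}_j \to Y$ factors continuously through some $Y_{m(j)}$, and since its image lies in $\ker Q$ we get a continuous inclusion $\widetilde{X}_j \subseteq \ker Q \cap Y_{m(j)} = X_{m(j)}$; conversely, $X_n$ is a Banach space mapping continuously into $X = \Ind(\widetilde{X}_j)_j$, so this map factors through some $\widetilde{X}_{p(n)}$, giving a continuous inclusion $X_n \subseteq \widetilde{X}_{p(n)}$. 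After reindexing to satisfy the interleaving condition, these mutual continuous inclusions exhibit $(X_n)_n$ and $(\widetilde{X}_j)_j$ as equivalent inductive spectra, and equivalent spectra have the same inductive limit, whence $X = \Ind \mathscr{X}$.

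I expect the only genuinely fiddly point to be the index bookkeeping in the $X$-part, namely arranging the functions $m(j)$ and $p(n)$ into the interleaved sequences required by the definition of equivalence of inductive spectra. The decisive conceptual ingredient is Grothendieck's factorization theorem, which substitutes for the missing principle that subspaces commute with inductive limits by reducing the topological identification of $X$ to an equivalence of Banach spectra.
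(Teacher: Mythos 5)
Your construction is the paper's construction: the same $X_n = \ker Q \cap Y_n$ (the paper writes $\iota^{-1}(\ker Q \cap Y_n)$ with the initial topology, which is the same Banach space) and the same $Z_n = Q(Y_n)$ with the quotient topology, with the same stepwise exactness of \eqref{buildingspectra}. However, your opening assertion that De Wilde's open mapping theorem makes $\iota$ a topological embedding onto $\ker Q$ with the subspace topology is a genuine error. De Wilde's theorem needs the target to be ultrabornological, and a closed subspace of an $(LB)$-space with the induced topology need not be ultrabornological (nor an $(LB)$-space); indeed, by the paper's own Corollary \ref{p:AcyclicEquiv}, $\iota$ being a topological embedding for \emph{every} exact sequence ending in $Z$ is equivalent to $Z$ being acyclic, and non-acyclic $(LB)$-spaces exist, so the claim is false in general. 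For most of your argument this is harmless — to define $X_n$ and get exactness you only need the algebraic identification $X = \ker Q$ and the closedness of $\ker Q$ (which follows from continuity of $Q$ and Hausdorffness of $Z$) — but it silently contaminates your second factorization step: the continuity of the inclusion $X_n \to X$, which you assert without proof, is precisely the sort of statement that fails when the subspace topology and the $(LB)$-topology of $X$ differ, so it cannot be read off from your (false) identification. The repair is easy and stays within your toolkit: either run Grothendieck's factorization theorem in the ambient space $Y$, exactly as in your first direction — $X_n \to Y$ is continuous with image in $\ker Q = \bigcup_j \widetilde{X}_j$ and each $\widetilde{X}_j \to Y$ is continuous, which yields the continuous inclusion $X_n \subseteq \widetilde{X}_{p(n)}$ directly — or first obtain continuity of $X_n \to X$ from De Wilde's closed graph theorem ($X_n$ is Banach, hence ultrabornological, $X$ is webbed, and the graph is closed because both spaces inject continuously into the Hausdorff space $Y$).

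With that repair your proof is correct and follows essentially the same route as the paper, which also identifies the limits after the same construction: the paper checks that the bijections $X \to \Ind \mathscr{X}$ and $\Ind \mathscr{Z} \to Z$ are continuous (the first being your $\widetilde{X}_j \subseteq X_{m(j)}$ factorization in disguise) and then applies De Wilde's open mapping theorem to both, whereas you replace De Wilde by a two-sided Grothendieck factorization (equivalence of spectra) on the $X$-side and by a direct universal-property comparison of the quotient and inductive limit topologies on the $Z$-side. These substitutes are fine and, if anything, more self-contained; the only real defect in your write-up is the unjustified — and in general false — topological embedding claim and the one step that leaned on it.
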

	
	\begin{proof}
	Let $n \in \N$. We set $X_n = \iota^{-1}( \ker Q \cap Y_n)$ and endow it with the initial topology with respect to the map  $\iota_{\mid X_n}: X_n \to Y_n$. We set $Z_n = Q(Y_n)$ and endow it with the final topology with respect to the map $Q_{\mid Y_n}: Y_n \to Z_n$. Then, $\mathscr{X}  =(X_n)_{n \in \N}$ and  $\mathscr{Z} = (Z_n)_{n \in \N}$ are inductive spectra of Banach spaces with $X = \bigcup_{n \in \N}X_n$ and $Z = \bigcup_{n \in \N}Z_n$ such that the sequence \eqref{buildingspectra} is exact for each $n \in \N$. Moreover, $X = \Ind \mathscr{X} $ and $Z = \Ind \mathscr{Z}$, as follows from  De Wilde's open mapping theorem and the fact that the inclusion maps $X \to \Ind \mathscr{X}$ and  $\Ind \mathscr{Z}  \to Z$ are continuous.
	\end{proof}

	\begin{proof}[Proof of Theorem \ref{t:MittagLeffler}]
		In view of Proposition \ref{p:EAcyclicImpliesProj1=0}, it suffices to prove that  $Z$ is  weakly $E$-acyclic if $\Proj^{1} \mathscr{L}(Z, E) = 0$. Let
			\[ \SES{X}{Y}{Z}{\iota}{Q} \]
			 be an exact sequence of $(LB)$-spaces.  
			 By Lemma \ref{l:StepwiseSES}, we find inductive spectra of Banach spaces $\mathscr{X} = (X_n)_{n \in \N}$, $\mathscr{Y} = (Y_n)_{n \in \N}$, and $\mathscr{Z} =(Z_n)_{n \in \N}$  with $X = \Ind \mathscr{X}$,  $Y = \Ind \mathscr{Y}$, and $Z = \Ind \mathscr{Z}$ such that  the sequence
			\begin{equation}
			\label{buildingspectra-3}
			\SES{X_{n}}{Y_{n}}{Z_{n}}{\iota_n}{Q_n} 
			\end{equation}
		is exact for each $n \in \N$, where $\iota_n = \iota_{\mid X_n}$ and $Q_n = Q_{\mid Y_n}$.  The sequence
			\begin{equation}
			\label{buildingspectra-2}
							\begin{tikzcd}
					0 \arrow{r} & L(Z_{n}, E) \arrow{r}{(Q_n)_*} & L(Y_{n}, E) \arrow{r}{(\iota_n)_*} & L(X_{n}, E) 
				\end{tikzcd}
			\end{equation}
			is exact for each $n \in \N$ (cf.\ Remark \ref{remark-wa}(ii)). Consider the morphism of projective spectra $((\iota_n)_*)_{n \in \N}: \mathscr{L}(\mathscr{Y}, E) \to \mathscr{L}(\mathscr{X}, E)$ and note that $\Proj ((\iota_n)_*)_{n \in \N} = \iota_*$ (under the identification \eqref{identification}).  Moreover, as the sequence \eqref{buildingspectra-2} is exact for each $n \in \N$,  the kernel of $((\iota_n)_*)_{n \in \N}$ is equivalent to  $\mathscr{L}(\mathscr{Z}, E)$. Since $\Proj^1 \mathscr{L}(\mathscr{Z}, E) = 0$, Proposition  \ref{t:AbstractMittagLeffler} yields that it suffices to show that  for every $n \in \N$ there is $m \geq n$ such that
			\begin{equation}
			\label{TP1}
			 \forall T \in L(X_m,E)  ~ \exists S \in L(Y_n,E) : T_{|X_n} = S \circ \iota_n.
			\end{equation}
Let $n \in \N$ be arbitrary.  Since all inductive spectra of Banach spaces defining $Z$ are equivalent, there is  $m \geq n$ such that the inclusion map $i: Z_n \to Z_m$ factors through a Banach space $H$ that is weakly $E$-acyclic in the category of Banach spaces.  Let $a \in L(Z_n, H)$ and $b \in L(H,Z_m)$ be such that $i= b \circ a$. We define the Banach space $G = \{ (y, h) \in Y_{m} \times H \mid Q_{m}(y) = b(h) \}$ and the continuous linear maps $j: X_m \to G, \,  x \mapsto (\iota_m(x),0)$ and $\pi: G \to H, \, (y,h) \mapsto h$. The sequence
\[			
			\SES{X_{m}}{G}{H}{j}{\pi} 
		\]
					is exact because \eqref{buildingspectra-3}  is exact. As $H$ is weakly $E$-acyclic in the category of Banach spaces, the map $j_*: L(G, E) \mapsto L(X_m, E)$ is surjective. 
					Consider the continuous linear map $\tau: Y_n \to G, \, y \mapsto (y,a(Q_n(y)))$, which is well-defined because $i= b \circ a$. The property \eqref{TP1} now follows from the fact that
					$j_{\mid X_{n}}$ can be factored as $j_{\mid X_{n}} = \tau \circ \iota_n$.		
	\end{proof}	
	

We now give some sufficient conditions on a lcHs $E$ and an $(LB)$-space $Z$ such that $Z$ is locally weakly $E$-acyclic. 

Recall that a Banach space $F$ is called \emph{injective} if every topological linear embedding $F \rightarrow G$, $G$ a Banach space, admits a continuous linear left inverse, while $F$ is called  \emph{projective} if every surjective continuous linear map $G \rightarrow F$, $G$ a Banach space, admits a continuous linear right inverse. 

	\begin{lemma}\label{l:StepwiseEAcyclicBanachSpace}

		Let $E$ be a lcHs and  let $F$  be a Banach space. Suppose that $E$ is an injective Banach space or that $F$ is a projective Banach space. Then, $F$ is weakly $E$-acyclic in the category of Banach spaces.
	\end{lemma}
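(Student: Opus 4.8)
The plan is to unwind Definition~\ref{d:locallyEacyclic}(i) and to treat the two hypotheses by separate, essentially dual, arguments. Fix an arbitrary exact sequence of Banach spaces
\[ \SES{W}{G}{F}{\iota}{q} \]
(I rename the subobject $W$ to avoid a clash with the space $F$ of the statement). By the open mapping theorem, $\iota$ is a topological embedding with $\iota(W) = \ker q$ and $q$ is the associated quotient map. Surjectivity of $\iota_* \colon L(G,E) \to L(W,E)$ is exactly the assertion that every $T \in L(W,E)$ admits an extension $S \in L(G,E)$ with $S \circ \iota = T$, so that is what I must produce in each case.

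First I would dispose of the case that $F$ is projective, which is the soft one. Projectivity, applied to the surjection $q \colon G \to F$, yields a continuous linear right inverse $r$ of $q$. Then $rq$ is a continuous projection of $G$, whence $\id_G - rq$ is a continuous projection whose range is $\ker q = \iota(W)$ and which restricts to the identity there; composing with the continuous inverse of $\iota \colon W \to \iota(W)$ gives a continuous linear left inverse $\ell \colon G \to W$ of $\iota$. Setting $S := T \circ \ell$ yields $S \circ \iota = T$. In short, projectivity of $F$ splits the sequence and the extension is then immediate.

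The injective case requires the standard pushout construction and is where the real work lies. Given $T \in L(W,E)$, I would form the pushout $P := (E \oplus_1 G)/W_0$ of $T$ and $\iota$ (with the $\ell^1$-sum on $E \oplus_1 G$), where $W_0 = \{ (Tw, -\iota w) : w \in W \}$; since $\iota$ is an embedding and $W$ is complete, $W_0$ is closed and $P$ is a Banach space. The canonical maps $u \colon E \to P$, $e \mapsto [e,0]$ and $v \colon G \to P$, $g \mapsto [0,g]$ are continuous and satisfy $u \circ T = v \circ \iota$ by construction. The crux is to check that $u$ is a topological embedding: estimating the quotient norm $\|u(e)\|_P = \inf_{w \in W}(\|e - Tw\|_E + \|\iota w\|_G)$ and distinguishing the cases in which $\|w\|_W$ is large or small relative to $\|e\|_E$ (using that $\iota$ is bounded below and $T$ bounded) yields $\|u(e)\|_P \geq c\|e\|_E$ for some $c>0$. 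Once $u$ is known to be an embedding, injectivity of $E$ supplies a continuous linear left inverse $\rho$ of $u$, and then $S := \rho \circ v$ satisfies $S \circ \iota = \rho \circ v \circ \iota = \rho \circ u \circ T = T$. The main obstacle is precisely this verification that the pushout map $u$ is bounded below; everything else in the argument is formal.
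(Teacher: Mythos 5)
Your proposal is correct and takes essentially the same route as the paper: the projective case is handled identically (projectivity splits the sequence, giving a left inverse of $\iota$ and the extension $S = T \circ \ell$), and the injective case uses the very same pushout $(E \times G)/\{(Tw,-\iota w) : w \in W\}$ that the paper borrows from the proof of Proposition \ref{p:Splitting=EAcyclic}, with injectivity of $E$ supplying the left inverse of $u$. The only cosmetic difference is that you verify that $u$ is bounded below by a direct two-case norm estimate (which works, with constant $c = \min\bigl(\tfrac{1}{2}, \tfrac{a}{2\|T\|}\bigr)$ where $a$ is a lower bound for $\iota$), whereas the paper obtains the embedding property of the pushout map for free from algebraic exactness of the pushout sequence and the open mapping theorem in the Banach category.
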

	\begin{proof}
		If $E$ is an injective Banach space, then every short exact sequence
			\[ \SES{E}{G}{F}{}{} \]
		of Banach spaces splits. The same push-out argument as in the proof of Proposition \ref{p:Splitting=EAcyclic} below shows that $F$ is weakly $E$-acyclic in the category of Banach spaces (see also  \cite[Proposition 5.1.3]{W-DerivFunctFuncAnal}). 
		Next, suppose that $F$ is a projective Banach space.  Let 
			\[ \SES{G}{H}{F}{\iota}{} \]
		be an exact sequence of Banach spaces. Since $F$ is projective, there exists $P \in L(H, G)$ such that $P \circ \iota = \id_{G}$. Then, for every $T \in L(G, E)$ it holds that $S= T \circ P \in L(H, E)$ satisfies $T = S \circ \iota$, whence $\iota_*: L(H,E) \to L(G,E)$ is surjective.
	\end{proof}
	
	
%
	
	\begin{lemma}
		\label{c:InjectiveLimitInjOrProjBanachSpaces=locallyEacyclic}
		Let $E$ be a lcHs and let $Z$ be an $(LB)$-space. Suppose that  $E$ is an $(LB)$-space such that $E = \Ind \mathscr{E}$ for some inductive spectrum $\mathscr{E} = (E_n)_{n \in \N}$  of Banach spaces such that all inclusion maps $E_n \to E_{n+1}$ factor through an injective Banach space or that $Z = \Ind \mathscr{Z}$ for some inductive spectrum $\mathscr{Z} = (Z_n)_{n \in \N}$  of Banach spaces such that all inclusion maps $ Z_n \to Z_{n+1}$ factor through a projective Banach space.  Then, $Z$ is locally weakly $E$-acyclic.
	\end{lemma}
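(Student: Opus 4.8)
The plan is to treat the two hypotheses separately, since they are of quite different natures, and in both cases reduce to Lemma~\ref{l:StepwiseEAcyclicBanachSpace}. The projective case is essentially immediate: suppose $Z = \Ind \mathscr{Z}$ with $\mathscr{Z} = (Z_n)_{n \in \N}$ a spectrum of Banach spaces in which each inclusion $Z_n \to Z_{n+1}$ factors as $Z_n \xrightarrow{a} P_n \xrightarrow{b} Z_{n+1}$ through a projective Banach space $P_n$. By Lemma~\ref{l:StepwiseEAcyclicBanachSpace}, for \emph{any} lcHs $E$ every projective Banach space is weakly $E$-acyclic in the category of Banach spaces. Hence each inclusion of the spectrum factors through a weakly $E$-acyclic Banach space, which is precisely the requirement of Definition~\ref{d:locallyEacyclic}(ii), so $Z$ is locally weakly $E$-acyclic.

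For the injective case I would first reduce to the stronger claim that, if $E = \Ind \mathscr{E}$ with $\mathscr{E} = (E_n)_{n \in \N}$ a spectrum of Banach spaces whose inclusions $E_n \to E_{n+1}$ factor through injective Banach spaces, then \emph{every} Banach space $H$ is weakly $E$-acyclic in the category of Banach spaces. Granting this, any $(LB)$-space $Z = \Ind (Z_n)_{n \in \N}$ is locally weakly $E$-acyclic, since each inclusion $Z_n \to Z_{n+1}$ factors trivially through a weakly $E$-acyclic Banach space (e.g.\ through $Z_{n+1}$ itself via the inclusion and the identity). To prove the claim I would fix an exact sequence of Banach spaces $0 \to F \xrightarrow{\iota} G \to H \to 0$ and $T \in L(F,E)$, and produce $S \in L(G,E)$ with $S \circ \iota = T$. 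Since $F$ is a Banach space and $E$ is an $(LB)$-space, Grothendieck's factorization theorem \cite[Theorem 24.33]{M-V-IntroFuncAnal} yields $k \in \N$ and $T' \in L(F, E_k)$ with $T = \lambda_k \circ T'$, where $\lambda_k \colon E_k \to E$ is the canonical map. Writing the inclusion $E_k \to E_{k+1}$ as $d_k \circ c_k$ with $c_k \in L(E_k, J_k)$, $d_k \in L(J_k, E_{k+1})$ and $J_k$ injective, I would apply Lemma~\ref{l:StepwiseEAcyclicBanachSpace} with the injective Banach space $J_k$ as target: $H$ is weakly $J_k$-acyclic, so $c_k \circ T' \in L(F, J_k)$ extends to some $R \in L(G, J_k)$ with $R \circ \iota = c_k \circ T'$. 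Setting $S = \lambda_{k+1} \circ d_k \circ R$ then gives $S \circ \iota = \lambda_{k+1} \circ d_k \circ c_k \circ T' = \lambda_k \circ T' = T$.

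The main obstacle, though a mild one, lies in the injective case, and within it in the passage from a map $T$ into the inductive limit $E$ to a map into a single step $E_k$: this is exactly where Grothendieck's factorization theorem enters and where the Banach (hence barrelled) structure of $F$ is used. The remaining work is a diagram chase tracking the canonical maps $\lambda_j$, and the only point requiring care is the identity $\lambda_{k+1} \circ d_k \circ c_k = \lambda_k$, which holds because $d_k \circ c_k$ is by hypothesis the canonical inclusion $E_k \to E_{k+1}$ and all $\lambda_j$ are the structural maps of the inductive limit. I would emphasize that invoking Lemma~\ref{l:StepwiseEAcyclicBanachSpace} with target $J_k$ conveniently repackages the extension property of injective Banach spaces, so that no separate verification of the equivalence ``injective $\Rightarrow$ extension property'' is needed.
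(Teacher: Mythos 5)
Your proposal is correct and takes essentially the same route as the paper: the projective case is read off directly from Lemma~\ref{l:StepwiseEAcyclicBanachSpace}, and in the injective case the paper likewise combines Grothendieck's factorization theorem with Lemma~\ref{l:StepwiseEAcyclicBanachSpace} to conclude that \emph{every} Banach space is weakly $E$-acyclic in the category of Banach spaces, whence $Z$ is trivially locally weakly $E$-acyclic. Your write-up simply makes explicit the factorization $T = \lambda_k \circ T'$ and the diagram chase $S = \lambda_{k+1} \circ d_k \circ R$ that the paper's one-line proof leaves to the reader.
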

	\begin{proof}
In the second case, the result is a direct consequence of Lemma \ref{l:StepwiseEAcyclicBanachSpace}. In the first case, we note that Grothendieck's factorization theorem and Lemma \ref{l:StepwiseEAcyclicBanachSpace}  imply that any Banach space is weakly $E$-acyclic in the category of Banach spaces, in particular $Z$ is locally weakly $E$-acyclic.
	\end{proof}

	\begin{corollary}
		\label{c:SpecificCasesAreLocallyEAcyclic}
		Let $E$ be a lcHs and let $Z$ be an $(LB)$-space. Suppose that one of the following assumptions is satisfied:
		\begin{itemize} 
			\item[(a)] $E \cong k^{\infty}(A)$.
			\item[(b)]   $E$  is an $(LN)$-space.
			\item[(c)]  $Z \cong k^{1}(A)$.
			\item[(d)]  $Z$ is an $(LN)$-space.
			\end{itemize}		
		Then, $Z$ is locally $E$-acyclic. 
	\end{corollary}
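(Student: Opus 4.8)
The plan is to reduce each of the four cases to one of the two hypotheses of Lemma~\ref{c:InjectiveLimitInjOrProjBanachSpaces=locallyEacyclic}: in cases (a) and (b) I would exhibit a defining inductive spectrum of Banach spaces for $E$ whose linking maps factor through \emph{injective} Banach spaces, and in cases (c) and (d) one for $Z$ whose linking maps factor through \emph{projective} Banach spaces. Everything rests on two elementary facts: $\ell^{\infty}(I)$ is an injective Banach space (apply the Hahn--Banach extension property coordinatewise), and $\ell^{1}(I)$ is a projective Banach space (given a quotient map onto $\ell^{1}(I)$, lift the canonical unit vectors to uniformly bounded preimages via the open mapping theorem and extend linearly to obtain a continuous right inverse).

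Cases (a) and (c) are immediate, since there the step spaces are themselves injective, respectively projective. Writing $k^{\infty}(A) = \Ind (\ell^{\infty}(a_{n}))_{n \in \N}$, the map $c \mapsto (a_{n,j} c_{j})_{j}$ is an isometric isomorphism $\ell^{\infty}(a_{n}) \cong \ell^{\infty}(I)$, so each step space is injective; the linking map $\ell^{\infty}(a_{n}) \to \ell^{\infty}(a_{n+1})$ then factors (as $\id$ followed by the inclusion) through the injective Banach space $\ell^{\infty}(a_{n})$, and the first hypothesis of Lemma~\ref{c:InjectiveLimitInjOrProjBanachSpaces=locallyEacyclic} applies. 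Dually, writing $k^{1}(A) = \Ind (\ell^{1}(a_{n}))_{n \in \N}$, each $\ell^{1}(a_{n}) \cong \ell^{1}(I)$ is projective, so the linking maps factor through projective step spaces and the second hypothesis applies.

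For cases (b) and (d) the key point is the standard factorization of a nuclear operator. If $T \colon U \to V$ is nuclear between Banach spaces, write $T = \sum_{n} \lambda_{n} \langle u_{n}', \cdot \rangle v_{n}$ with $(\lambda_{n}) \in \ell^{1}$, $\sup_{n}\|u_{n}'\| \leq 1$, and $\sup_{n}\|v_{n}\| \leq 1$; then $T = \beta \circ D \circ \alpha$, where $\alpha \colon U \to \ell^{\infty}$, $u \mapsto (\langle u_{n}', u\rangle)_{n}$, where $D \colon \ell^{\infty} \to \ell^{1}$ is the diagonal multiplier by $(\lambda_{n})$, and where $\beta \colon \ell^{1} \to V$, $(d_{n}) \mapsto \sum_{n} d_{n} v_{n}$, are all continuous. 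Reading this as $T = (\beta \circ D) \circ \alpha$ exhibits $T$ as factoring through the injective Banach space $\ell^{\infty}$, while reading it as $T = \beta \circ (D \circ \alpha)$ exhibits $T$ as factoring through the projective Banach space $\ell^{1}$. In case (b) the space $E = \Ind (E_{n})_{n \in \N}$ has nuclear linking maps by the definition of an $(LN)$-space, and by the first reading these factor through $\ell^{\infty}$, so the first hypothesis of Lemma~\ref{c:InjectiveLimitInjOrProjBanachSpaces=locallyEacyclic} holds; in case (d) the nuclear linking maps of $Z$ factor through $\ell^{1}$ by the second reading, so the second hypothesis holds. In every case, Lemma~\ref{c:InjectiveLimitInjOrProjBanachSpaces=locallyEacyclic} yields that $Z$ is locally weakly $E$-acyclic.

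I expect no genuine obstacle here: the homological content has already been absorbed into Lemmas~\ref{l:StepwiseEAcyclicBanachSpace} and~\ref{c:InjectiveLimitInjOrProjBanachSpaces=locallyEacyclic}, so the corollary is essentially a matter of recognizing the correct injective/projective Banach spaces in each of the four standard situations. The only mildly substantive step is the nuclear factorization underlying (b) and (d); once that is in hand, the verification in each case is routine bookkeeping.
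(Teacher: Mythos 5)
Your proposal is correct and follows essentially the same route as the paper: both reduce all four cases to Lemma~\ref{c:InjectiveLimitInjOrProjBanachSpaces=locallyEacyclic} using the injectivity of $\ell^{\infty}$ and projectivity of $\ell^{1}$ for (a) and (c), and the factorization of nuclear operators through $\ell^{\infty}$ respectively $\ell^{1}$ for (b) and (d). The only difference is cosmetic: you spell out the nuclear factorization $T = \beta \circ D \circ \alpha$ and prove the injectivity/projectivity facts, whereas the paper cites them from Lindenstrauss--Tzafriri and states the factorization through $\ell^{p}(\N)$, $p = 1, \infty$, in one line.
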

	
	\begin{proof} We have that $\ell^{\infty}$ is  an injective Banach space \cite[p.~105]{L-T-ClassicalBanachSpI} and that $\ell^{1}$ is a projective Banach space \cite[Proposition 2.f.7, p.~107]{L-T-ClassicalBanachSpI}.
Hence, the cases (a) and (c) are therefore a direct consequence of Lemma \ref{c:InjectiveLimitInjOrProjBanachSpaces=locallyEacyclic}. Since any nuclear map between two Banach spaces can be factored through $\ell^p(\N)$, $p=1, \infty$, (b) and (d) also follow from this lemma. 
	\end{proof}
	
As a first application of Theorem \ref{t:MittagLeffler} and Corollary \ref{c:SpecificCasesAreLocallyEAcyclic}, we characterize (weak) acyclicity of an $(LB)$-space in terms of weak $\ell^{\infty}(I)$-acyclicity.

\begin{proposition}
		\label{t:MainExtensionThm2}
		Let $Z$ be an $(LB)$-space. Then, 
			\begin{itemize} 
			\item[(i)] $Z$ is acyclic if and only if $Z$ is weakly $\ell^{\infty}(I)$-acyclic for every index set  $I$. 
			\item[(ii)]   $Z$ is weakly acyclic if and only if $Z$ is weakly $\ell^{\infty}(I)$-acyclic for every (some) finite index set  $I$.
			\end{itemize}
\end{proposition}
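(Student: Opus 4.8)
The plan is to read off both equivalences directly from the injective resolution \eqref{eq:InjectiveResolution} of a fixed defining spectrum $\mathscr{Z} = (Z_n)_{n \in \N}$ of Banach spaces for $Z$, by comparing the first derived projective limit of $\mathscr{L}(\mathscr{Z}, E)$ with the (weak) topological embedding properties of the map $d$. Throughout I use that $d$ is injective, which is immediate from $d((x_n)_n) = (x_n - x_{n-1})_n$.

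First I would record the algebraic backbone. Using the universal property $L(\bigoplus_{n} Z_n, E) = \prod_{n} L(Z_n, E)$ and the explicit form of $d$, a direct computation (already carried out in the proof of Proposition \ref{p:EAcyclicImpliesProj1=0}, where it is written as $\Psi = \Phi^{-1} \circ d_{\ast} \circ \Phi$) identifies the pullback $d_{\ast} \colon L(\bigoplus_n Z_n, E) \to L(\bigoplus_n Z_n, E)$, $T \mapsto T \circ d$, with the map $\Psi$ defining $\Proj^{1} \mathscr{L}(\mathscr{Z}, E)$. Consequently $d_{\ast}$ is surjective if and only if $\Proj^{1} \mathscr{L}(\mathscr{Z}, E) = 0$. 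Specializing to $E = \ell^{\infty}(I)$ and $E = \C$, the problem reduces to deciding when $d_{\ast}$ is onto.

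The heart of the argument is a characterization of the embedding properties of $d$ through test maps, which I would isolate as a lemma: for the injective continuous map $d$, (a) $d$ is a topological embedding if and only if $d_{\ast}$ is surjective on $L(\,\cdot\,, \ell^{\infty}(I))$ for every index set $I$, and (b) $d$ is a weak topological embedding if and only if its transpose $d^{t}$, which is $d_{\ast}$ in the case $E = \C$, is surjective. For (b) this is the classical fact that a continuous injective map is a weak topological embedding precisely when its transpose is onto (equivalently, $\sigma(E, d^{t}E') = \sigma(E,E')$). For (a) one uses that $d$ is a topological embedding if and only if every continuous seminorm $p$ on $\bigoplus_n Z_n$ is dominated by $q \circ d$ for some continuous seminorm $q$; realizing $p$ as $p = \sup_{f \in B}|f|$ with $B = \{f \in (\bigoplus_n Z_n)' : |f| \le p\}$ via Hahn--Banach identifies continuous seminorms with operators into spaces $\ell^{\infty}(I)$, and the coordinatewise norm-preserving extension of functionals — valid because $\ell^{\infty}(I)$ is an injective Banach space and $I$ may be chosen arbitrarily large (e.g. $I = B$) — turns ``$p$ extends'' into ``$d_{\ast}$ is surjective into $\ell^{\infty}(I)$''. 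I expect this lemma, in particular the passage in (a) between continuous seminorms and $\ell^{\infty}(I)$-valued maps together with the injective-space extension, to be the main point; it is exactly here that the full topology of $\bigoplus_n Z_n$ (detected by all $\ell^{\infty}(I)$) is separated from its weak topology (detected by $\C$).

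Finally I would assemble the proposition. Since $\ell^{\infty}(I) \cong k^{\infty}(A)$ for the constant matrix $A$, Corollary \ref{c:SpecificCasesAreLocallyEAcyclic} shows $Z$ is locally weakly $\ell^{\infty}(I)$-acyclic for every $I$, so Theorem \ref{t:MittagLeffler} gives that $Z$ is weakly $\ell^{\infty}(I)$-acyclic if and only if $\Proj^{1} \mathscr{L}(Z, \ell^{\infty}(I)) = 0$, i.e. if and only if $d_{\ast}$ is onto into $\ell^{\infty}(I)$. For (i), combining this over all $I$ with part (a) of the lemma yields: $Z$ is weakly $\ell^{\infty}(I)$-acyclic for all $I$ $\iff$ $d$ is a topological embedding $\iff$ $Z$ is acyclic. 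For (ii), a $\C^{d}$-valued map extends through $d$ if and only if each of its $d$ scalar components does, so weak $\ell^{\infty}(I)$-acyclicity for a finite nonempty $I$ does not depend on $|I|$ and coincides with weak $\C$-acyclicity; by Theorem \ref{t:MittagLeffler} this equals $\Proj^{1} \mathscr{L}(Z, \C) = 0$, i.e. surjectivity of $d^{t}$, which by part (b) of the lemma is exactly the weak topological embedding property of $d$, i.e. weak acyclicity of $Z$.
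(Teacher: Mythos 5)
Your proposal is correct, and its overall skeleton coincides with the paper's: both reduce weak $\ell^{\infty}(I)$- and $\C$-acyclicity to $\Proj^{1}\mathscr{L}(Z,\ell^{\infty}(I))=0$ and $\Proj^{1}\mathscr{L}(Z,\C)=0$ via Theorem \ref{t:MittagLeffler} and Corollary \ref{c:SpecificCasesAreLocallyEAcyclic} (noting $\ell^{\infty}(I)\cong k^{\infty}(A)$ for a constant matrix), and both dispose of (ii) by the componentwise-extension remark for finite $I$. The one genuine difference is that where the paper simply cites \cite[p.~110]{W-DerivFunctFuncAnal} for the equivalences ``$Z$ acyclic $\iff$ $\Proj^{1}\mathscr{L}(Z,\ell^{\infty}(I))=0$ for all $I$'' and ``$Z$ weakly acyclic $\iff$ $\Proj^{1}\mathscr{L}(Z,\C)=0$'', you prove them in-line: you identify $\Proj^{1}\mathscr{L}(\mathscr{Z},E)=0$ with surjectivity of $d_{*}$ for the injective resolution (exactly the computation $\Psi=\Phi^{-1}\circ d_{*}\circ\Phi$ from Proposition \ref{p:EAcyclicImpliesProj1=0}), and then characterize ``$d$ is a (weak) topological embedding'' dually: the weak case by the classical fact that an injective continuous map is a weak embedding iff its transpose is onto, and the topological case by encoding a continuous seminorm $p$ as the $\ell^{\infty}(B)$-valued map built from $B=\{f\in X'\colon |f|\le p\}$ and lifting equicontinuous families through $d$ via Hahn--Banach. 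Both directions of your lemma check out, and the equicontinuous-lifting mechanism in part (a) is in fact the same device the paper itself deploys in the proof of Corollary \ref{p:AcyclicEquiv}, so your argument makes the proposition self-contained at essentially no extra cost, at the price of reproving a known result. Two minor points: in (ii) the ``some finite index set'' must of course be nonempty (you say this; $\ell^{\infty}(\emptyset)=\{0\}$ makes the claim vacuous otherwise), and in your final chain you only need Proposition \ref{p:EAcyclicImpliesProj1=0} for the direction ``weakly $E$-acyclic $\Rightarrow$ $\Proj^{1}=0$'', reserving the local weak $E$-acyclicity hypothesis of Theorem \ref{t:MittagLeffler} for the converse.
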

\begin{proof}
It holds that $Z$ is acylic if and only if $\Proj^{1} \mathscr{L}(Z, \ell^{\infty}(I)) = 0$ for every index set  $I$, while  $Z$ is weakly acyclic if and only if $\Proj^{1} \mathscr{L}(Z, \C) = 0$  \cite[p.\ 110]{W-DerivFunctFuncAnal}.	Moreover, it is clear that $Z$ is weakly $\ell^{\infty}(I)$-acyclic for every finite index set  $I$ if and only if $Z$ is weakly $\C$-acyclic.		
Hence, the result follows from Theorem \ref{t:MittagLeffler} and Corollary \ref{c:SpecificCasesAreLocallyEAcyclic}. \end{proof}
We end this subsection by giving two corollaries of Proposition \ref{t:MainExtensionThm2} that will be used later on.
\begin{corollary}
		\label{c:wa}
		Let $E$ be a lcHs and let $Z$ be a weakly $E$-acyclic $(LB)$-space. Then, $Z$ is weakly acyclic.
	\end{corollary}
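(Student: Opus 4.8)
The plan is to reduce weak acyclicity to weak $\C$-acyclicity and then to exploit that the one-dimensional space $\C$ sits inside $E$ as a topologically complemented subspace. By Proposition \ref{t:MainExtensionThm2}(ii), $Z$ is weakly acyclic precisely when it is weakly $\ell^\infty(I)$-acyclic for some finite index set $I$; taking $I$ to be a singleton gives $\ell^\infty(I) = \C$, so it suffices to show that weak $E$-acyclicity implies weak $\C$-acyclicity. This is the only place where the structural results of the subsection are invoked.

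Assuming $E \neq \{0\}$ (this must be excluded as a degenerate case: if $E = \{0\}$ then $L(X,E) = 0$ for every $X$, so every $(LB)$-space would be vacuously weakly $E$-acyclic while not every $(LB)$-space is weakly acyclic), I would first fix a nonzero vector $e_0 \in E$ and, by the Hahn--Banach theorem, choose $\varphi \in E'$ with $\varphi(e_0) = 1$. Setting $j : \C \to E$, $\lambda \mapsto \lambda e_0$, I obtain continuous linear maps satisfying $\varphi \circ j = \id_\C$; in other words, $\C$ is a topologically complemented subspace of $E$.

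Next, given an arbitrary exact sequence of $(LB)$-spaces $0 \to X \xrightarrow{\iota} Y \xrightarrow{Q} Z \to 0$ and a functional $f \in L(X, \C) = X'$, I would push $f$ into $E$ by forming $j \circ f \in L(X, E)$. Since $Z$ is weakly $E$-acyclic, there exists $T \in L(Y, E)$ with $T \circ \iota = j \circ f$. Composing with $\varphi$ yields $S := \varphi \circ T \in L(Y, \C)$, and $S \circ \iota = \varphi \circ (T \circ \iota) = \varphi \circ j \circ f = f$. Thus $f$ extends through $\iota$, which shows that $\iota_* : L(Y, \C) \to L(X, \C)$ is surjective for every such sequence, i.e.\ $Z$ is weakly $\C$-acyclic, and hence weakly acyclic by Proposition \ref{t:MainExtensionThm2}(ii).

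There is essentially no hard step: the entire argument amounts to the observation that weak $E$-acyclicity is inherited by any topologically complemented subspace of $E$, here applied to the line $\C e_0$ with projection $\varphi$. The only points requiring (minor) care are the exclusion of the trivial case $E = \{0\}$ and the correct appeal to Proposition \ref{t:MainExtensionThm2}(ii) to return from weak $\C$-acyclicity to weak acyclicity; I do not expect any genuine obstacle.
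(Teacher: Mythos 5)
Your proof is correct and takes essentially the same route as the paper: the paper's proof is exactly the observation that weak $E$-acyclicity is inherited by complemented subspaces, applied to a copy of $\C$ in $E$, followed by Proposition \ref{t:MainExtensionThm2}; you merely make the complementation explicit via $j$ and the Hahn--Banach functional $\varphi$. Your exclusion of the degenerate case $E = \{0\}$ (under which the statement would in fact fail) is a sensible precaution that the paper leaves implicit.
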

\begin{proof}
As $Z$ is weakly $E$-acyclic, $Z$ is weakly $E_0$-acyclic for every lcHs $E_0$ that is isomorphic to a complemented subspace of $E$. In particular, $Z$ is weakly $\C$-acyclic and thus, by Proposition \ref{t:MainExtensionThm2},  weakly acyclic.
\end{proof}

 The next result is essentially due to Palamodov \cite[\S 6]{P-HomMethTheoryLCS} (see also \cite[Proposition 1.2]{V-RegPropLFSp}). We present a short proof based on  Proposition \ref{t:MainExtensionThm2}.
 \begin{corollary}
		\label{p:AcyclicEquiv}
		An $(LB)$-space $Z$ is (weakly) acyclic if and only if for every exact sequence 
			\begin{equation}
			\label{exactLB}
			\SES{X}{Y}{Z}{\iota}{} \end{equation}
		of $(LB)$-spaces the map $\iota$ is a (weak) topological embedding.
		\end{corollary}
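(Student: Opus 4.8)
The plan is to deduce the corollary from Proposition \ref{t:MainExtensionThm2} by means of two pointwise observations about a single exact sequence $0 \to X \xrightarrow{\iota} Y \to Z \to 0$ of $(LB)$-spaces: (a) $\iota$ is a topological embedding if and only if $\iota_* : L(Y, \ell^\infty(I)) \to L(X, \ell^\infty(I))$ is surjective for every index set $I$; and (b) $\iota$ is a weak topological embedding if and only if $\iota_* : L(Y, \C) \to L(X, \C)$ is surjective. Granting these, the result is immediate. By Proposition \ref{t:MainExtensionThm2}(i), $Z$ is acyclic if and only if it is weakly $\ell^\infty(I)$-acyclic for every $I$, i.e.\ if and only if $\iota_*$ into $\ell^\infty(I)$ is surjective for every exact sequence and every $I$, which by (a) means exactly that $\iota$ is a topological embedding for every exact sequence. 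The weakly acyclic case is handled identically, using Proposition \ref{t:MainExtensionThm2}(ii) and (b) in place of (i) and (a).

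The statement (b) is essentially a matter of duality. Under the identifications $L(Y,\C)=Y'$ and $L(X,\C)=X'$, the map $\iota_*$ is precisely the transpose $\iota^t : Y' \to X'$. Since $\iota$ is injective, $\iota^t(Y')$ separates the points of $X$, and the dual of $X$ equipped with the topology $\sigma(X,\iota^t(Y'))$ is $\iota^t(Y')$ itself. Hence the topology that $\iota(X)$ inherits from $\sigma(Y,Y')$, pulled back to $X$, coincides with $\sigma(X,X')$ precisely when $\iota^t(Y')=X'$, that is, precisely when $\iota^t$ is surjective. This is (b).

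For (a) I would use the canonical identification of $L(X,\ell^\infty(I))$ with the equicontinuous families $(x_j')_{j\in I}$ in $X'$, under which $\iota_*$ becomes $(y_j')_j \mapsto (\iota^t(y_j'))_j$. If $\iota$ is a topological embedding and $T\in L(X,\ell^\infty(I))$ has its associated family contained in a polar $U^\circ$, I choose a neighbourhood $V$ of $0$ in $Y$ with $\iota^{-1}(V)\subseteq U$, so that $p_U(x)\le q_V(\iota(x))$ for all $x$, where $p_U$ and $q_V$ denote the gauges of $U$ and $V$; extending each functional by Hahn--Banach under the dominating seminorm $q_V$ produces an equicontinuous family in $V^\circ$ lifting the given one, hence a preimage of $T$. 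For the converse I apply surjectivity to the single test map $T_U:X\to\ell^\infty(U^\circ)$, $x\mapsto(\langle x',x\rangle)_{x'\in U^\circ}$, which satisfies $\|T_U(x)\|_\infty=p_U(x)$ by the bipolar theorem. Any continuous preimage $S$ carries some neighbourhood $V$ of $0$ in $Y$ into the closed unit ball, so that $p_U(x)=\|S(\iota(x))\|_\infty\le q_V(\iota(x))$ and therefore $\iota^{-1}(V)\subseteq U$. Letting $U$ run through a neighbourhood basis of $0$ in $X$ shows that the topology of $X$ is induced by $\iota$, i.e.\ that $\iota$ is a topological embedding.

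The genuinely substantive step is the converse half of (a): one must manufacture the correct test map $T_U$ and read the embedding property off a single extension, the point being that surjectivity for the particular index sets $I=U^\circ$ already suffices. The remaining pieces, namely the forward direction of (a) and both directions of (b), are routine Hahn--Banach extension and duality bookkeeping.
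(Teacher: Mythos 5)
Your proposal is correct, and its hard half coincides with the paper's argument: for the implication from (weak) acyclicity to the embedding property, the paper likewise lifts, for each closed absolutely convex zero neighbourhood $U$ of $X$, the canonical map $X \to \ell^\infty(U^\circ)$, $x \mapsto (\ev{x'}{x})_{x' \in U^\circ}$ (your $T_U$, i.e.\ the equicontinuous family $U^\circ$ with index set $I = U^\circ$), through $\iota$ via weak $\ell^\infty(I)$-acyclicity from Proposition \ref{t:MainExtensionThm2}(i), and then recovers openness of $\iota$ by the bipolar theorem — the paper phrases the conclusion as $V \cap \iota(X) = \iota(U)$ with $V = A^\circ$ for an equicontinuous lift $A$ of $U^\circ$, where you argue with gauges instead, a cosmetic difference; your item (b) is exactly what the paper's ``follows directly from Proposition \ref{t:MainExtensionThm2}(ii)'' tacitly uses. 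Where you genuinely diverge is the sufficiency direction (embeddings for all sequences imply (weak) acyclicity): the paper settles this in one line by applying the hypothesis to the injective resolution \eqref{eq:InjectiveResolution} of a defining spectrum, which is itself an exact sequence of $(LB)$-spaces ending in $Z$, so that $d$ being a (weak) topological embedding is literally the definition of (weak) acyclicity. You instead prove the forward halves of (a) and (b) — Hahn--Banach lifting of equicontinuous families along a topological embedding, respectively surjectivity of $\iota^t$ — and feed them into the ``if'' directions of Proposition \ref{t:MainExtensionThm2}. This is valid and non-circular, since that proposition is established independently of the corollary, and it has the merit of turning the corollary into a purely formal consequence of two pointwise duality statements about a single sequence; the price is that you route an essentially definitional implication through the $\Proj^1$-machinery behind Proposition \ref{t:MainExtensionThm2} plus an extra Hahn--Banach argument, where the paper's injective-resolution observation is shorter and more elementary.
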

	
	\begin{proof}
		By considering the injective resolution \eqref{eq:InjectiveResolution} of an inductive spectrum of Banach spaces defining $Z$, we find that the condition is sufficient. If $Z$ is weakly acyclic, the converse follows directly from Proposition \ref{t:MainExtensionThm2}(ii). Suppose now that $Z$ is acyclic and consider an exact sequence \eqref{exactLB} of $(LB)$-spaces. Let $\{x'_{j}\}_{j \in I}$ be an arbitrary equicontinuous subset of $X'$ and define $T \in L(X,\ell^\infty(I))$  via $T(x) =  (\ev{x'_{j}}{x})_{j \in I}$, $x \in X$.  Proposition \ref{t:MainExtensionThm2}(i)  yields that $Z$ is weakly $\ell^{\infty}(I)$-acylic. Hence, there is $S \in L(Y, \ell^{\infty}(I))$ such that $S \circ \iota = T$. Note  that there is an equicontinuous subset $\{y'_{j}\}_{j \in I}$ of $Y^{\prime}$ such that $S(y) = (\ev{y'_{j}}{y})_{j \in I}$, $y \in Y$. Thus $\iota^t(y'_{j}) = x'_{j}$ for all $j \in I$. Consequently, every equicontinuous subset of $X^{\prime}$ is the image under $\iota^t$ of an equicontinuous subset of $Y^{\prime}$. Let  $U$ be an arbitrary absolutely convex closed neighborhood of zero in $X$. Then, $U^{\circ} \subseteq X^{\prime}$ is equicontinuous,  whence there is some equicontinuous subset $A$ of $Y^{\prime}$ such that $\iota^{t}(A) = U^\circ$.  Consider the zero neighborhood $V= A^\circ$  of $Y$. The bipolar theorem implies that $V \cap \iota(X) = \iota(U)$. This shows that $\iota$ is open.
	\end{proof}

\subsection{A characterization of $\Proj^{1} \mathscr{L}(Z, E) = 0$}\label{subsect-2}

Throughout this subsection, we fix a locally complete lcHs $E$ with a fundamental sequence of bounded sets and an $(LB)$-space $Z$. Let $(B_{N})_{N \in \N}$ be a fundamental increasing sequence of bounded sets in $E$ such that each $B_N$ is a Banach disk and set $E_{N} = E_{B_N}$. Let $\mathscr{Z} = (Z_{n})_{n \in \N}$ be an inductive spectrum of Banach spaces such that $Z = \Ind \mathscr{Z}$. 

For $n,N \in \N$ we endow $L(Z_n,E_N)$ with its natural Banach space topology and denote by $B(Z_{n}, E_{N})$ the unit ball in $L(Z_n,E_N)$. 
Then, $(L(Z_n,E_N))_{N \in \N}$ is an inductive spectrum of Banach spaces and 
$$L(Z_n, E) = \bigcup_{N \in \N} L(Z_n, E_{N}).$$ 
We endow $L(Z_n, E)$ with the corresponding inductive limit topology. Then,  $L(Z_n, E)$ is an $(LB)$-space (it is Hausdorff as $E$ is so). In such a way, the projective spectrum $\mathscr{L}(\mathscr{Z}, E) = (L(Z_{n}, E), \varrho^n_m)$ consists of $(LB)$-spaces and continuous linear maps $\varrho^n_m$.  Proposition \ref{P:CondProj1=0} applied to the projective spectrum $\mathscr{L}(\mathscr{Z}, E)$ reads as follows.

	
%
%
	
	\begin{proposition}
		\label{l:SuffNeccCondProj1=0}
		Consider the conditions
			\begin{equation}
				\label{eq:SuffCondProj1=0}
				\begin{gathered}
					\forall n \in \N ~ \exists m \geq n ~ \forall k \geq m ~ \exists N \in \N ~ \forall M \geq N, \varepsilon > 0 ~ \exists K \geq M, C > 0  \\
					\forall T \in B(Z_{m}, E_{M})  ~ \exists S \in CB(Z_{k}, E_{K}), R \in \varepsilon B(Z_{n}, E_{N}) \, : \,
					\mbox{$T =S + R$ on $Z_n$.}
				\end{gathered}
			\end{equation}
		and
			\begin{equation}
				\label{eq:NeccCondProj1=0}
				\begin{gathered}
					\forall n \in \N ~ \exists m \geq n ~ \forall k \geq m ~ \exists N \in \N ~ \forall M \geq N ~ \exists K \geq M, C > 0 \\
\forall T \in B(Z_{m}, E_{M})  ~ \exists S \in CB(Z_{k}, E_{K}), R \in CB(Z_{n}, E_{N}) \, : \,
					\mbox{$T =S + R$ on $Z_n$.}				\end{gathered}
			\end{equation}
		Then, \eqref{eq:SuffCondProj1=0} $\Rightarrow$ $\Proj^{1} \mathscr{L}(Z, E) = 0$ $\Rightarrow$ \eqref{eq:NeccCondProj1=0}.
	\end{proposition}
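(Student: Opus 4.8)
The plan is to apply Proposition~\ref{P:CondProj1=0} to the projective spectrum $\mathscr{L}(\mathscr{Z}, E) = (L(Z_n, E), \varrho^n_m)$, which (as established in the paragraph preceding the statement) consists of $(LB)$-spaces with continuous linking maps $\varrho^n_m$. The key observation is that $L(Z_n, E) = \Ind (L(Z_n, E_N))_{N \in \N}$ is itself an $(LB)$-space with defining inductive spectrum whose $N$-th Banach space is $L(Z_n, E_N)$ and whose unit ball is precisely $B(Z_n, E_N)$. So the abstract conditions \eqref{eq:SuffCondProj1=0X} and \eqref{eq:NeccCondProj1=0X} of Proposition~\ref{P:CondProj1=0}, with $X_n = L(Z_n, E)$, $X_{n,N} = L(Z_n, E_N)$, and $B_{n,N} = B(Z_n, E_N)$, become concrete statements about continuous linear maps; the task is to verify that they translate verbatim into \eqref{eq:SuffCondProj1=0} and \eqref{eq:NeccCondProj1=0}.

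First I would unwind the spectral-map image $\varrho^n_m(B(Z_m, E_M))$. An element of this image is a map $T_{\mid Z_n}$ where $T \in L(Z_m, E_M)$ has $\|T\| \leq 1$, i.e.\ the restriction to $Z_n$ of a norm-$\leq 1$ map $Z_m \to E_M$. The inclusion
\[
\varrho^n_m(B(Z_m, E_M)) \subseteq C\, \varrho^n_k(B(Z_k, E_K)) + \varepsilon\, B(Z_n, E_N)
\]
then says exactly: for every such $T$ there exist $S \in L(Z_k, E_K)$ with $\|S\|\leq C$ and $R \in L(Z_n, E_N)$ with $\|R\| \leq \varepsilon$ such that $T = S + R$ as maps on $Z_n$ (where $S$ is understood via its restriction $\varrho^n_k$ and $R$ is an element of the $N$-th unit ball scaled by $\varepsilon$). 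Writing $S \in C\,B(Z_k, E_K)$ and $R \in \varepsilon\, B(Z_n, E_N)$, this is precisely the conclusion line of \eqref{eq:SuffCondProj1=0}. The quantifier prefix of \eqref{eq:SuffCondProj1=0X} matches that of \eqref{eq:SuffCondProj1=0} symbol-for-symbol, and the same matching holds between \eqref{eq:NeccCondProj1=0X} and \eqref{eq:NeccCondProj1=0} (with both $S$ and $R$ bounded by $C$). Thus the two displayed conditions are literal re-readings of the abstract hypotheses, and the claimed chain of implications follows immediately from Proposition~\ref{P:CondProj1=0}.

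The one genuine verification point, which I expect to be the only real content, is that the hypotheses of Proposition~\ref{P:CondProj1=0} are met by $\mathscr{L}(\mathscr{Z}, E)$: namely that each $L(Z_n, E)$ is an $(LB)$-space whose unit balls are the $B(Z_n, E_N)$. This is where local completeness of $E$ enters—each $B_N$ is chosen to be a Banach disk, so $E_N = E_{B_N}$ is a Banach space and $L(Z_n, E_N)$ is genuinely a Banach space with the stated unit ball; the Hausdorff property of the inductive limit (noted in the text) ensures $L(Z_n, E)$ is an honest $(LB)$-space. The inductive spectra $(L(Z_n, E_N))_{N}$ are increasing because $(B_N)_N$ is increasing, giving continuous inclusions $E_N \hookrightarrow E_{N+1}$ and hence $L(Z_n, E_N) \hookrightarrow L(Z_n, E_{N+1})$. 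Once this structural identification is in place, the proof is a one-line appeal to Proposition~\ref{P:CondProj1=0}, and indeed the proof in the text should read simply: ``This is Proposition~\ref{P:CondProj1=0} applied to $\mathscr{L}(\mathscr{Z}, E)$.''
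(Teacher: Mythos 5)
Your proposal is correct and takes essentially the same route as the paper: the paper offers no separate proof, introducing the statement with the remark that Proposition~\ref{P:CondProj1=0} applied to the projective spectrum $\mathscr{L}(\mathscr{Z}, E)$ ``reads as'' this proposition, after having established in the preceding paragraph exactly the structural identification you verify (each $L(Z_n,E)$ is an $(LB)$-space with defining spectrum $(L(Z_n,E_N))_{N\in\N}$ and unit balls $B(Z_n,E_N)$, using that the $B_N$ are Banach disks by local completeness of $E$). Your unwinding of $\varrho^n_m(B(Z_m,E_M))$ into the decomposition $T=S+R$ on $Z_n$ is precisely the intended translation, so nothing is missing.
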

	

In this subsection, we will use Proposition \ref{l:SuffNeccCondProj1=0}  to relate  $\Proj^{1} \mathscr{L}(Z, E) = 0$ to the conditions $(\condS)$ and $(\condWS)$  for $(E, Z)$. We start with the following result.

	\begin{lemma}
		\label{l:Proj1=0ImpliesS}
		If $\Proj^{1} \mathscr{L}(Z, E) = 0$, then $(E, Z)$ satisfies $(\condWS)$.
	\end{lemma}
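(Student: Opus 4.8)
The plan is to combine the necessary condition for $\Proj^{1} \mathscr{L}(Z, E) = 0$ recorded in Proposition \ref{l:SuffNeccCondProj1=0} with a rank-one operator construction. First I would invoke Proposition \ref{l:SuffNeccCondProj1=0} to deduce from $\Proj^{1} \mathscr{L}(Z, E) = 0$ that condition \eqref{eq:NeccCondProj1=0} holds, so that it remains only to show the implication \eqref{eq:NeccCondProj1=0} $\Rightarrow (\condWS)$. The quantifier blocks of the two conditions coincide, so I would fix $n \in \N$, extract the corresponding $m \geq n$ from \eqref{eq:NeccCondProj1=0}, fix an arbitrary $k \geq m$, extract $N \in \N$, fix $M \geq N$, and extract $K \geq M$ and $C > 0$. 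The goal is then to verify the set inclusion $\|z\|_{Z_{m}} B_{M} \subseteq C(\|z\|_{Z_{k}} B_{K} + \|z\|_{Z_{n}} B_{N})$ for all $z \in Z_n$, with these same indices and the same constant $C$.

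The key step is to test this set inclusion on individual points. Fix $z \in Z_n$ (the case $z = 0$ being trivial, as both sides reduce to $\{0\}$) and $e \in B_M$; I want to show $\|z\|_{Z_m} e$ lies in the right-hand side. By the Hahn--Banach theorem there is a functional $\phi \in Z_m'$ with $\|\phi\|_{Z_m'} = 1$ and $\langle \phi, z \rangle = \|z\|_{Z_m}$. I would then consider the rank-one operator $T \colon Z_m \to E_M$, $w \mapsto \langle \phi, w\rangle e$; since $\|e\|_{E_M} \leq 1$ it has operator norm at most one, i.e.\ $T \in B(Z_m, E_M)$. Applying \eqref{eq:NeccCondProj1=0} to $T$ yields $S \in C B(Z_k, E_K)$ and $R \in C B(Z_n, E_N)$ with $T = S + R$ on $Z_n$. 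Evaluating at $z$ gives $\|z\|_{Z_m} e = T(z) = S(z) + R(z)$ in $E$, where $\|S(z)\|_{E_K} \leq C \|z\|_{Z_k}$ and $\|R(z)\|_{E_N} \leq C \|z\|_{Z_n}$, so that $S(z) \in C\|z\|_{Z_k} B_K$ and $R(z) \in C\|z\|_{Z_n} B_N$ (using that the $B_N$ may be taken absolutely convex). Hence $\|z\|_{Z_m} e \in C(\|z\|_{Z_k} B_K + \|z\|_{Z_n} B_N)$, and since $z$ and $e$ were arbitrary this is exactly $(\condWS)$ with constant $C$.

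I do not expect a genuine obstacle here: the only thing to get right is the observation that every point $\|z\|_{Z_m} e$ of the left-hand side is realized as $T(z)$ for a norm-one operator $T$ depending on $z$ and $e$, so that the purely operator-theoretic splitting supplied by \eqref{eq:NeccCondProj1=0} transfers verbatim to the geometric splitting demanded by $(\condWS)$. Minor care is needed only in the bookkeeping of operator norms versus Banach-disk gauges (the equivalence $\|w\|_{E_N} \leq r \Leftrightarrow w \in r B_N$ and the identity $C r B_N = r C B_N$) and in isolating the trivial case $z = 0$, where Hahn--Banach does not produce a norming functional.
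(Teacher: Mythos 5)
Your proposal is correct and coincides with the paper's own proof: both reduce via Proposition \ref{l:SuffNeccCondProj1=0} to showing \eqref{eq:NeccCondProj1=0} $\Rightarrow$ $(\condWS)$, and both test the operator-splitting condition on the rank-one map $T(v) = \langle z', v\rangle e$ built from a Hahn--Banach norming functional for $z$. Your extra remarks on the $z=0$ case and the gauge-ball bookkeeping are fine but inessential, since the paper simply takes $z \in Z_n \setminus \{0\}$ and the $B_N$ are Banach disks by the standing assumptions of that subsection.
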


	\begin{proof}
		By Proposition  \ref{l:SuffNeccCondProj1=0}, it suffices to show that \eqref{eq:NeccCondProj1=0} implies that  $(E, Z)$ satisfies $(\condWS)$. To this end, it is enough to prove that, for $n \leq m \leq k$, $N \leq M \leq K$, $C > 0$ fixed, it holds that  the condition
		$$\forall T \in B(Z_{m}, E_{M})  ~ \exists S \in CB(Z_{k}, E_{K}), R \in CB(Z_{n}, E_{N}) \, : \,\mbox{$T =S + R$ on $Z_n$}
		$$
		implies that 	
	$$ \|z\|_{Z_{m}} B_{M} \subseteq C  (\|z\|_{Z_{k}} B_{K} + \|z\|_{Z_{n}} B_{N}) , \qquad \forall z \in Z_{n} .
	$$		
	Let  $z \in Z_{n} \setminus \{0\}$ and  $e \in B_{M}$ be arbitrary. By the Hahn-Banach theorem, there is $z' \in Z^{\prime}_{m}$ with $\|z'\|_{Z^{\prime}_{m}} \leq 1$ such that $\ev{z'}{z} = \|z\|_{Z_{m}}$. Define  $T \in B(Z_{m}, E_{M})$ via $T(v) = \ev{z'}{v} e$, $v \in Z_m$. Hence, there are $S \in C B(Z_{k}, E_{K})$ and $R \in C B(Z_{n}, E_{N})$ such that $T =S + R$ on $Z_n$. Then,
	$$\|z\|_{Z_{m}} e = T(z) = S(z) + R(z) \in  C  (\|z\|_{Z_{k}} B_{K} + \|z\|_{Z_{n}} B_{N}).$$
	\end{proof}
	
Our next goal is to show that  $(\condS)$ for $(E, Z)$ implies  that $\Proj^{1} \mathscr{L}(Z, E) = 0$ if one of the assumptions (a)-(d) from Corollary  \ref{c:SpecificCasesAreLocallyEAcyclic} is satisfied.
We will often use the following technical lemma.

	\begin{lemma}
		\label{l:CondSImplications}
		Let $F_{0} \subseteq F_{1} \subseteq F_{2}$ be three seminormed spaces with continuous inclusion maps.  
		Set $U_{j} = \{ x \in F_{j} \mid \|x\|_{F_{j}} \leq 1 \}$, $j = 0, 1, 2$. Let $C, \varepsilon > 0$. If
		$$\|x\|_{F_{1}} \leq C \|x\|_{F_{2}} + \varepsilon \|x\|_{F_{0}}, \qquad \forall x \in F_{0},$$
		then
			\[ \forall x'_{1} \in U_{1}^{\circ} ~ \exists x'_{2} \in 2CU_{2}^{\circ}, x'_{0} \in 2\varepsilon U_{0}^{\circ} \, : \,\mbox{$x'_1 =x'_2 + x'_0$ on $F_0$}. \]
	\end{lemma}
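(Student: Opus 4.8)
The plan is to recast the desired decomposition as a single Hahn--Banach extension on an auxiliary product space. I would form the algebraic direct sum $V = F_2 \oplus F_0$ and equip it with the $\ell^1$-type seminorm
$$ p(u,v) = 2C\|u\|_{F_2} + 2\varepsilon\|v\|_{F_0}, \qquad (u,v) \in V. $$
Inside $V$ I consider the diagonal subspace $\Delta = \{(x,x) : x \in F_0\}$, where in $(x,x)$ the first coordinate views $x$ as an element of $F_2$ (using $F_0 \subseteq F_2$) and the second as an element of $F_0$; the assignment $x \mapsto (x,x)$ is a linear bijection $F_0 \to \Delta$. On $\Delta$ I define the linear functional $\mu(x,x) = \ev{x'_1}{x}$, which is well defined and records exactly the restriction of $x'_1$ to $F_0$.

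The crucial step is to show that $\mu$ is dominated by $p$ on $\Delta$, since this is where the hypothesis enters. Because $x'_1 \in U_1^\circ$, we have $|\ev{x'_1}{x}| \leq \|x\|_{F_1}$ for every $x \in F_0 \subseteq F_1$, and the assumed inequality $\|x\|_{F_1} \leq C\|x\|_{F_2} + \varepsilon\|x\|_{F_0}$ then gives
$$ |\mu(x,x)| \leq C\|x\|_{F_2} + \varepsilon\|x\|_{F_0} = \tfrac{1}{2} p(x,x) \leq p(x,x), $$
so the factor $2$ built into $p$ leaves comfortable slack. By the Hahn--Banach theorem, in its form for complex-linear functionals dominated in modulus by a seminorm, $\mu$ extends to a linear functional $\Lambda$ on all of $V$ with $|\Lambda(u,v)| \leq p(u,v)$ for all $(u,v) \in V$.

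It then remains to read off the decomposition. Splitting $\Lambda$ along the two summands, I set $\ev{x'_2}{u} = \Lambda(u,0)$ for $u \in F_2$ and $\ev{x'_0}{v} = \Lambda(0,v)$ for $v \in F_0$. From $|\Lambda(u,0)| \leq p(u,0) = 2C\|u\|_{F_2}$ and $|\Lambda(0,v)| \leq p(0,v) = 2\varepsilon\|v\|_{F_0}$ I obtain $x'_2 \in 2C\,U_2^\circ$ and $x'_0 \in 2\varepsilon\,U_0^\circ$, while for $x \in F_0$ the additivity $\Lambda(x,x) = \Lambda(x,0) + \Lambda(0,x)$ yields $\ev{x'_1}{x} = \ev{x'_2}{x} + \ev{x'_0}{x}$, i.e. $x'_1 = x'_2 + x'_0$ on $F_0$. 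I do not expect a serious obstacle here; the only genuine subtlety to watch is that the $F_j$ carry seminorms rather than norms, so I would invoke the seminorm-dominated version of Hahn--Banach and note that the estimates $|\Lambda(\,\cdot\,,0)| \leq 2C\|\cdot\|_{F_2}$ and $|\Lambda(0,\,\cdot\,)| \leq 2\varepsilon\|\cdot\|_{F_0}$ place $x'_2, x'_0$ in the stated polars even when the seminorms degenerate; everything else is routine.
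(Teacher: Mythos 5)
Your proof is correct, but it takes a genuinely different route from the paper. The paper argues by duality inside $F_0'$: setting $V_j = \{x \in F_0 \mid \|x\|_{F_j} \leq 1\}$, the hypothesis gives the inclusion $\tfrac{1}{2C}V_2 \cap \tfrac{1}{2\varepsilon}U_0 \subseteq V_1$, from which $V_1^\circ \subseteq 2C V_2^\circ + 2\varepsilon U_0^\circ$ follows via the bipolar theorem and the Banach--Alaoglu theorem (weak$^*$ compactness of the polars is what makes the sum closed, so that the closed absolutely convex hull of their union is contained in it); Hahn--Banach then enters twice, once to restrict the problem to $F_0$ and once to lift the $F_2$-component from $F_0'$ back to $F_2'$. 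You instead reduce everything to a single Hahn--Banach extension on the auxiliary space $F_2 \oplus F_0$ with the $\ell^1$-type seminorm $p$, dominating the diagonal functional and splitting the extension along the two summands. Your route is more elementary --- no polars, no bipolar theorem, no compactness --- and it is in fact sharper: since your domination estimate reads $|\mu(x,x)| \leq C\|x\|_{F_2} + \varepsilon\|x\|_{F_0} = \tfrac{1}{2}p(x,x)$, you could just as well take $p(u,v) = C\|u\|_{F_2} + \varepsilon\|v\|_{F_0}$ and conclude $x_2' \in C\,U_2^\circ$ and $x_0' \in \varepsilon\,U_0^\circ$; the factor $2$ in the statement is an artifact of the paper's method, where passing from the intersection $\tfrac{1}{2C}V_2 \cap \tfrac{1}{2\varepsilon}U_0$ to a sum of polars forces the split $\tfrac12 + \tfrac12$. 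Your handling of the seminorm degeneracy is also sound: $x_1' \in U_1^\circ$ does give $|\langle x_1', x\rangle| \leq \|x\|_{F_1}$ for all $x$ (when $\|x\|_{F_1} = 0$ one scales $tx \in U_1$ for all $t > 0$ to see the functional vanishes at $x$), and the Bohnenblust--Sobczyk form of Hahn--Banach for complex functionals dominated in modulus by a seminorm is exactly the right tool, so all the estimates survive degeneration of the seminorms as you claim.
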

	
	\begin{proof}
	Set $V_j = \{x \in F_{0} \mid \|x\|_{F_{j}} \leq 1 \}$ for $j =1, 2$.  
	Let $V_j^\circ$ be the polar set of $V_j$ in $F'_0$. By the Hahn-Banach theorem, it suffices to show that $V_1^\circ \subseteq 2CV_2^\circ + 2\varepsilon U^\circ_0$. Our assumption implies that $\frac{1}{2C}V_2 \cap \frac{1}{2\varepsilon}U_0  \subseteq V_1$.  Hence, the result follows from the bipolar theorem and the  Banach-Alaoglu theorem.
			\end{proof}
	
	\begin{lemma}\label{l:Siss}
		Suppose that  $(E, Z)$ satisfies $(\condS)$  and that one of the assumptions (a)--(d) from Corollary  \ref{c:SpecificCasesAreLocallyEAcyclic} holds. Then, $\Proj^{1} \mathscr{L}(Z, E) = 0$.
	\end{lemma}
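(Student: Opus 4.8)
The plan is to verify condition \eqref{eq:SuffCondProj1=0}, which by Proposition~\ref{l:SuffNeccCondProj1=0} yields $\Proj^{1}\mathscr{L}(Z,E)=0$. The crucial structural observation is that the quantifier prefix of \eqref{eq:SuffCondProj1=0} coincides verbatim with that of $(\condS)$. Hence I would simply fix the data $n\le m\le k$, $N\le M\le K$, $\varepsilon>0$, $C>0$ furnished by $(\condS)$ (applied, when needed, with $\varepsilon/2$ in place of $\varepsilon$, doubling $C$ at the end), and for a given $T\in B(Z_m,E_M)$ produce a decomposition $T=S+R$ on $Z_n$ with $S\in CB(Z_k,E_K)$ and $R\in\varepsilon B(Z_n,E_N)$. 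In other words, the entire task is to \emph{lift} the geometric inclusion in $(\condS)$ to a decomposition of operators, and this is exactly where the special structure of $E$ (cases (a),(b)) or of $Z$ (cases (c),(d)) enters.

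\emph{Case (c), $Z\cong k^{1}(A)$ --- the coordinatewise side.} Here $Z_n=\ell^{1}(a_n)$, and a continuous operator out of $\ell^{1}(a_n)$ is determined by, and its norm computed from, the images of the unit vectors $e_j$. For $T\in B(Z_m,E_M)$ one has $T(e_j)\in a_{m,j}B_M$; inserting $z=e_j$ into $(\condS)$ (so that $\|e_j\|_{Z_\ell}=a_{\ell,j}$) gives $a_{m,j}B_M\subseteq C a_{k,j}B_K+\varepsilon a_{n,j}B_N$, hence a splitting $T(e_j)=v_j+w_j$ with $v_j\in C a_{k,j}B_K$ and $w_j\in\varepsilon a_{n,j}B_N$. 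Defining $S$ and $R$ by $Se_j:=v_j$, $Re_j:=w_j$ and extending by continuity gives $\|S\|_{L(Z_k,E_K)}\le C$ and $\|R\|_{L(Z_n,E_N)}\le\varepsilon$ directly from the coordinatewise bounds, while $T=S+R$ holds on the dense span of the $e_j$ and thus on $Z_n$. Note that this argument imposes no condition on $E$ whatsoever.

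\emph{Case (a), $E\cong k^{\infty}(A)$ --- the dual side.} Now $E_M=\ell^{\infty}(a_M)$, and an operator $T\in B(Z_m,E_M)$ is the family of functionals $t_j:=\pi_j\circ T\in Z'_m$ with $\|t_j\|_{Z'_m}\le a_{M,j}^{-1}$. A direct computation shows that for $E\cong k^{\infty}(A)$ the inclusion in $(\condS)$ is equivalent to the scalar inequalities $a_{M,j}^{-1}\|z\|_{Z_m}\le C a_{K,j}^{-1}\|z\|_{Z_k}+\varepsilon a_{N,j}^{-1}\|z\|_{Z_n}$ for all $z\in Z_n$ and all $j$. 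For each fixed $j$ I would rescale and apply Lemma~\ref{l:CondSImplications} with $(F_0,F_1,F_2)=(Z_n,Z_m,Z_k)$ to the functional $a_{M,j}t_j\in U_m^\circ$, obtaining $t_j=s_j+r_j$ on $Z_n$ with $\|s_j\|_{Z'_k}\le 2C a_{K,j}^{-1}$ and $\|r_j\|_{Z'_n}\le 2\varepsilon a_{N,j}^{-1}$. Reassembling $S:=(s_j)_j$ and $R:=(r_j)_j$ yields operators with $\|S\|_{L(Z_k,E_K)}\le 2C$ and $\|R\|_{L(Z_n,E_N)}\le 2\varepsilon$ and $T=S+R$ on $Z_n$ coordinatewise; the factor $2$ is removed by the rescaling mentioned above.

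\emph{Cases (b),(d) and the main obstacle.} These I would reduce to (a) and (c) respectively by factoring the relevant nuclear spectral maps: for (d) ($Z$ an $(LN)$-space) one factors a nuclear link of $\mathscr{Z}$ through $\ell^{1}$, for (b) ($E$ an $(LN)$-space) one factors a nuclear link of $E$ through the diagonal chain $\ell^{\infty}\to\ell^{1}$ --- precisely the factorizations already used in the proof of Corollary~\ref{c:SpecificCasesAreLocallyEAcyclic}. The effect is to replace the $Z$-side (resp. the $E$-side) by genuine $\ell^{1}$- (resp. $\ell^{\infty}$-) coordinates, after which the arguments of case (c) (resp. (a)) apply. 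I expect the main difficulty to lie exactly here: one must check that $(\condS)$ for $(E,Z)$ survives the passage to the auxiliary coordinatised spectrum and, more delicately, that the summable diagonal weights produced by nuclearity can be matched against $\varepsilon$ and $C$ across all three spectral levels $n\le m\le k$ and $N\le M\le K$ without disturbing the nested quantifier structure of \eqref{eq:SuffCondProj1=0}. The remaining points --- the existence of the coordinatewise splittings via Hahn--Banach and the bipolar theorem (as in Lemma~\ref{l:CondSImplications}), and the fact that selecting them over the discrete index set $I$ raises no measurability issue --- are routine.
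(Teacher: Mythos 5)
Your cases (a) and (c) are correct and coincide, essentially line for line, with the paper's proof: both reduce \eqref{eq:SuffCondProj1=0} to a coordinatewise splitting, via Lemma \ref{l:CondSImplications} applied to the component functionals in case (a), and via evaluation of $(\condS)$ at the unit vectors $c^{(j)}$ in case (c). The genuine gap is in your treatment of (b) and (d), where the proposed reduction by factoring nuclear links through $\ell^{1}$ resp.\ $\ell^{\infty}$ does not just face a bookkeeping difficulty with diagonal weights --- it fails structurally. Take (d): writing a nuclear link as $i(z)=\sum_{j}\langle z'_j,z\rangle y_j$ with $\sum_j \|z'_j\|_{Z'_n}\|y_j\|_{Z_{n+1}}<\infty$ and splitting $T(y_j)=f_j+g_j$ by $(\condS)$ does yield a well-controlled small part $R(z)=\sum_j\langle z'_j,z\rangle g_j$ on $Z_n$; but the large part $S(z)=\sum_j\langle z'_j,z\rangle f_j$ is defined only on $Z_n$, since the functionals $z'_j$ live in $Z'_n$ and admit no extensions to $Z_k$ with norms matched to the weights --- so one never obtains $S\in C\,B(Z_k,E_K)$. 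Equivalently, the auxiliary $\ell^{1}$-spectrum has non-diagonal connecting maps, hence is not of the form $k^{1}(A)$, and the unit-vector argument of case (c) does not apply to it. The paper circumvents exactly this by taking Hilbert steps and a singular value decomposition of the compact inclusion $Z_n\to Z_k$, producing a system $(z_j)$ orthogonal \emph{simultaneously} in $Z_n$ and in $Z_k$; this two-level orthogonality is what gives coefficient functionals controlled at level $k$ (for $S$) and at level $n$ (for $R$) at the same time, and no generic nuclear factorization provides it.

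For (b) there are two further obstructions. First, a lifting problem: to transport $T\in B(Z_m,E_M)$ into $\ell^{\infty}$-coordinates you must lift $T$ against the surjection $\ell^{\infty}(w)\to b(\text{ball})\subseteq E$ coming from the factorization $E_M\to\ell^{\infty}(w)\to E_{M+1}$; no bounded lift exists in general, and the fundamental sequence of bounded sets formed by the images of $\ell^{\infty}$-balls gives local Banach steps that are \emph{quotients} of weighted $\ell^{\infty}$-spaces, not such spaces themselves, so the identification of operators with families of functionals underlying case (a) breaks down. Second, and more fundamentally, the paper's proof of (b) rests on a dichotomy your reduction cannot see: the reassembly identity $e=\sum_j\langle e,f_j\rangle_{K+1}f_j$ for $e\in E_M$, which is what turns coordinatewise splittings back into a decomposition of $T$, is valid only when $E_M$ lies in the closure of $E_{N+1}$ inside $E_{K+1}$. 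When this density condition fails, the paper shows that $(\condS)$ degenerates to $\|z\|_{Z_m}\leq C\|z\|_{Z_k}$ for $z\in Z_n$, and the required decomposition is then produced with $R=0$ by a Hahn--Banach extension of the coefficient functionals --- an entirely different mechanism involving no $\varepsilon$-splitting. As it stands, your proposal establishes the lemma only in cases (a) and (c); cases (b) and (d) require the bi-level (SVD) argument or a substitute for it, which the sketched reduction does not supply.
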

	
	\begin{proof}
		By Lemma \ref{l:SuffNeccCondProj1=0} it suffices to show that $(\condS)$ for  $(E, Z)$  implies \eqref{eq:SuffCondProj1=0}. We consider each of the cases (a)--(d) separately.
		
	\noindent (a) Let  $A = (a_{N})_{N \in \N} = (a_{N, j})_{N \in \N, j \in I}$.  We may suppose that $B_N = ND_N$ with $D_N$ the closed unit ball in $\ell^\infty(a_N)$ and thus that $E_N =\ell^\infty(a_N)$ endowed with the norm $\frac{1}{N}\| \, \cdot \, \|_{\ell^\infty(a_N)}$. It suffices to show that, for $n \leq m \leq k$, $N \leq M \leq K$, $C, \varepsilon > 0$ fixed,
			\begin{equation}
			\label{Sinfty} 
			\|z\|_{Z_{m}} B_{M} \subseteq C \|z\|_{Z_{k}} B_{K} + \varepsilon \|z\|_{Z_{n}} B_{N} , \qquad \forall z \in Z_{n},  \end{equation}
			implies that 
			$$
			\forall T \in B(Z_{m},E_M)  ~ \exists S \in 2CB(Z_{k}, E_K), R \in 2\varepsilon B(Z_{n}, E_N) \, : \,
					\mbox{$T =S + R$ on $Z_n$.}
			$$		
		Condition \eqref{Sinfty} implies that
			\[\frac{M}{a_{M, j}} \|z\|_{Z_{m}} \leq C \frac{K}{a_{K, j}} \|z\|_{Z_{k}} + \varepsilon \frac{N}{a_{N, j}} \|z\|_{Z_{n}} , \qquad \forall z \in Z_{n} . \]
		Let $T \in B(Z_{m}, E_{M})$ be arbitrary. Then, there are $z'_{j} \in Z^{'}_{m}$, $j \in I$, such that $T(z) = (\ev{z'_{j}}{z})_{j \in I}$, $z \in Z_m$, and $\|z'_{j}\|_{Z^{\prime}_{m}} \leq M/ a_{M, j}$ for all $j \in I$. By Lemma \ref{l:CondSImplications}, we find $z'_{j, 2} \in Z^{\prime}_{k}$ with $\|z'_{j, 2}\|_{Z^{\prime}_{k}} \leq 2C K / a_{K, j}$ and $z'_{j, 0} \in Z^{\prime}_{n}$ with $\|z'_{j, 0}\|_{Z^{\prime}_{n}} \leq 2 \varepsilon N / a_{N, j}$ such that $z'_{j} = z'_{j, 2} + z'_{j, 0}$ on $Z_{n}$. Define $S(z) = (\ev{z'_{j, 2}}{z})_{j \in I}$, $z \in Z_k$, and $R(z) = (\ev{z'_{j, 0}}{z})_{j \in I}$, $z \in Z_N$.  Then, $S \in 2C B(Z_{k}, E_{K})$ and $R \in 2 \varepsilon B(Z_{n}, E_{N})$ are such that $T =S + R$ on $Z_n$.
\\ \\
\noindent (b) We may suppose that each $E_{N}$ is a Hilbert space (with inner product $\langle \cdot, \cdot \rangle_N$) and that the inclusion maps $i_N: E_{N} \rightarrow E_{N+1}$ are nuclear for all $N \in \N$.
We distinguish two cases. \\
\noindent \emph{Case 1: The inductive spectrum $(E_{N})_{N \in \N}$ satisfies
	\begin{equation} 
				\label{extra-cond}
				\exists N \in \N \, \forall M \geq N \, \exists K \geq M\,:\, E_M \subseteq \overline{E_N}^{E_K}.
\end{equation}}
Condition $(\condS)$ for $(E, Z)$ and \eqref{extra-cond}  together yield that
\begin{gather*}
							\forall n \in \N ~ \exists m \geq n ~ \forall k \geq m ~ \exists N \in \N ~ \forall M \geq N, \varepsilon >0~ \exists K \geq M, C > 0 \,: \\
							 \forall z \in Z_n \, : \, \|z\|_{Z_{m}} B_{M} \subseteq C \|z\|_{Z_{k}} B_{K} + \varepsilon \|z\|_{Z_{n}} B_{N} \quad \mbox{and} \qquad E_M \subseteq \overline{E_N}^{E_K}.
						\end{gather*}
Hence, it suffices to show that, for $n \leq m \leq k$, $N + 2 \leq M < K$ fixed, there are $C_0, C_1 > 0$ such that for all $C, \varepsilon > 0$ 
	\begin{equation} 
				\label{eq:SImpliesProj1=0--E(DFN)--Cond1}
				 \forall z \in Z_n \, : \, \|z\|_{Z_{m}} B_{M} \subseteq C \|z\|_{Z_{k}} B_{K} + \varepsilon \|z\|_{Z_{n}} B_{N} \quad \mbox{and} \quad E_M \subseteq \overline{E_{N+1}}^{E_{K+1}}
			\end{equation}
			imply that
			$$
			\forall T \in B(Z_{m},E_M)  ~ \exists S \in CC_0B(Z_{k}, E_{K+2}), R \in  \varepsilon C_1 B(Z_{n}, E_{N+2}) \, : \,
					\mbox{$T =S + R$ on $Z_n$.}
			$$		
From the first property in  \eqref{eq:SImpliesProj1=0--E(DFN)--Cond1} we obtain that
			\begin{equation} 
				\label{eq:SImpliesProj1=0--E(DFN)--Cond2}
				\|z\|_{Z_{m}} \|e'_{|E_M}\|_{E^{\prime}_{M}} \leq C \|z\|_{Z_{k}} \|e'\|_{E^{\prime}_{K}} + \varepsilon \|z\|_{Z_{n}} \|e'_{|E_N}\|_{E^{\prime}_{N}}, \qquad \forall z \in Z_{n}, e' \in E^{\prime}_{K} .  
			\end{equation}
By considering a singular value decomposition of the compact inclusion map $E_{N+1} \rightarrow E_{K+1}$, we find a complete orthonormal system $(e_{j})_{j \in I}$ in $E_{N+1}$ that is orthogonal in $E_{K+1}$.  
Set  $f_j = e_j/\| e_j\|_{E_{K+1}}$, $j \in I$. Note that 
\begin{equation}
\label{swichtinpr}
\langle e, f_j \rangle_{K+1} =  \sum_{j' \in I} \ev{e}{e_{j'}}_{N + 1} \langle e_{j'}, f_j \rangle_{K+1}  = \|e_{j}\|_{E_{K+1}} \ev{e}{e_{j}}_{N + 1}, \quad \forall e \in E_{N+1}.
\end{equation}
Hence, 
$$
e = \sum_{j \in I} \langle e, f_j \rangle_{K+1} f_j, \qquad \forall e \in E_{N+1},
$$
where the series converges in $E_{N+1}$.
This equality together with the second property in  \eqref{eq:SImpliesProj1=0--E(DFN)--Cond1} and the fact that $(f_{j})_{j \in I}$ is an orthonormal system in $E_{K+1}$ imply that 
\begin{equation}
\label{equality-lift}
e = \sum_{j \in I} \langle e, f_j \rangle_{K+1} f_j, \qquad \forall e \in E_{M},
\end{equation}
where the series converges in $E_{K+1}$.
Let $T \in B(Z_{m}, E_{M})$ be arbitrary. For $j \in I$ we define $f'_j \in E'_{K+1}$ via $\langle f'_j, e \rangle = \langle e, f_j \rangle_{K+1}$, $e \in E_{K+1}$, and set $z'_j = f'_j \circ  T \in Z'_m$. Note that $\|z'_j\|_{Z'_m} \leq  \|f'_{j|E_M}\|_{E^{\prime}_{M}}$. By \eqref{eq:SImpliesProj1=0--E(DFN)--Cond2} and Lemma \ref{l:CondSImplications}, we find $v'_j \in Z'_k$ with $\|v'_j\|_{Z'_k} \leq 2C\|f'_{j|E_K}\|_{E^{\prime}_{K}}$ and $w'_j \in Z'_n$ with $\|w'_j\|_{Z'_n} \leq 2\varepsilon\|f'_{j|E_N}\|_{E^{\prime}_{N}}$ such that $z'_j = v'_j +w'_j$ on $Z_n$.
 Define 
\begin{gather*}
S(z) = \sum_{j \in I} \langle v'_j, z \rangle f_{j}, \qquad z \in Z_k, \\
 R(z) = \sum_{j \in I}  \langle w'_j, z \rangle f_{j}, \qquad  z \in Z_n.
 \end{gather*}
		In order to show the required continuity estimates for $S$ and $R$, we need some preparation. The inclusion maps $i_J: E_{J} \rightarrow E_{J+1}$ are nuclear and thus Hilbert-Schmidt. Let $L \in \N$ be arbitrary and let $g =(g_{j})_{j \in I}$ be any orthonormal system in $E_{L + 1}$. Then,
					\[ \sum_{j \in I} |\ev{e}{g_{j}}_{L + 1}| \|g_{j}\|_{E_{L + 2}} \leq \nu_{2}(i_{L + 1}) \|e\|_{E_{L + 1}},\qquad \forall e \in E_{L+1}, \]
		where $\nu_{2}$ denotes the Hilbert-Schmidt norm. Hence, 
			\[ \sigma_{L + 1} : E_{L + 1} \rightarrow \ell^{1}, \, e \mapsto (\ev{e}{g_{j}}_{L + 1} \| g_{j}\|_{E_{L + 2}})_{j \in I} \]
		is a well-defined continuous linear map with $ \|\sigma_{L + 1}\|_{L(E_{L + 1}, \ell^{1})} \leq \nu_{2}(i_{L + 1})$. Since the map $\sigma_{L + 1} \circ i_{L}: E_{L} \rightarrow \ell^{1}$ is nuclear  (as  $i_{L}$ is nuclear), there exists a positive sequence $(\lambda^{L,g}_{j})_{j \in I} \in \ell^1$ such that
			\[ |\ev{e}{g_{j}}_{L + 1} |\|g_{j}\|_{E_{L + 2}} \leq \lambda^{L,g}_{j} \|e\|_{E_{L}} , \qquad \forall e \in E_{L}, j \in I, \]
		and
			\[ \sum_{j \in I} \lambda^{L,g}_{j} \leq 2 \nu_{1}(\sigma_{L + 1} \circ i_{L}) \leq 2 \|\sigma_{L + 1}\|_{L(E_{L + 1}, \ell^{1})} \nu_{1}(i_{L}) \leq 2 \nu_{1}(i_{L}) \nu_{2}(i_{L + 1}) = C_L , \]
		 where $\nu_{1}$ denotes the nuclear norm. Set $\gamma^{L,g}_{j} = \| g_{j}\|_{E_{L + 2}} / \lambda^{L,g}_{j}$. Then, 
			\begin{equation}
				\label{eq:GammaUpperBound} 
				\sup_{j \in I} \gamma^{L,g}_{j} |\ev{e}{g_{j}}_{L + 1}| \leq \|e\|_{E_{L}}, \qquad \forall e \in E_L. 
			\end{equation}
		 For $L = K$ and $(g_{j})_{j \in I} = (f_{j})_{j \in I}$, \eqref{eq:GammaUpperBound} gives that
			$$
				\sup_{j \in I} \gamma^{(K)}_{j} \|f'_{j|E_K}\|_{E^{\prime}_{K}} \leq 1,
			$$
			where $\gamma^{(K)}_{j} = \gamma^{K,f}_{j}$ with $f =  (f_{j})_{j \in I}$. 
	In view of \eqref{swichtinpr}, \eqref{eq:GammaUpperBound} with $L = N$ and $(g_{j})_{j \in I} = (e_{j})_{j \in I}$ implies that
		$$
				\sup_{j \in I}  \frac{\gamma^{(N)}_{j}}{\|e_{j}\|_{E_{K+1}} } \|f'_{j|E_N}\|_{E^{\prime}_{N}} \leq 1,
		$$
		where $\gamma^{(N)}_{j} = \gamma^{N,e}_{j}$ with $e=  (e_{j})_{j \in I}$. 
			We return to $S$ and $R$. We have that for all $z \in Z_k$
			\begin{align*}
				\|S(z)\|_{E_{K + 2}}
				&\leq \sum_{j \in I} |\langle v'_j, z \rangle| \| f_{j}\|_{E_{K + 2}}
				\leq C_{K} \sup_{j \in I} \gamma^{(K)}_{j}  |\langle v'_j, z \rangle| \\
				&\leq 2CC_{K}   \|z\|_{Z_{k}} \sup_{j \in I} \gamma^{(K)}_{j}  \|f'_{j|E_K}\|_{E'_K} 
				\leq   2CC_{K} C \|z\|_{Z_{k}} ,
			\end{align*}
		whence $S \in  2CC_K B(Z_{k}, E_{K + 2})$. Similarly, we find for all $z \in Z_n$
			\begin{align*}
				\|R(z)\|_{E_{N + 2}}
				&\leq \sum_{j \in I} \frac{1}{\|e_{j}\|_{E_{K+1}} } |\langle w'_j, z \rangle| \| e_{j}\|_{E_{N + 2}}
				\leq C_{N} \sup_{j \in I} \gamma^{(N)}_{j} \frac{1}{\|e_{j}\|_{E_{K+1}} } |\langle w'_j, z \rangle| \\
				&\leq 2 \varepsilon C_N\|z\|_{Z_{n}} \sup_{j \in I} \frac{ \gamma^{(N)}_{j}}{\|e_{j}\|_{E_{K+1}} } \|f'_{j|E_N}\|_{E'_N} 
				\leq 2 \varepsilon C_N \|z\|_{Z_{n}} ,
			\end{align*}
		and thus $R \in  2\varepsilon C_NB(Z_{n}, E_{N+ 2})$.  Finally, by \eqref{equality-lift}, we have that for all $z \in Z_n$
	\begin{align*} 
				S(z) + R(z) &= \sum_{j \in I} \langle z'_j, z \rangle f_{j}  = \sum_{j \in I} \langle f'_j, T(z) \rangle f_{j}  =  \sum_{j \in I} \langle  T(z),f_j \rangle_{K+1} f_{j} =  T(z).
			\end{align*}			
\noindent \emph{Case 2: The inductive spectrum $(E_{N})_{N \in \N}$ does not satisfy \eqref{extra-cond}, i.e.,
	\begin{equation} 
				\label{extra-cond3}
				\forall N \in \N \, \exists M \geq N \, \forall K \geq M~:~ E_M \not \subseteq \overline{E_N}^{E_K}.
\end{equation}}
Condition $(\condS)$ for $(E, Z)$ implies that
\begin{gather*}
							\forall n \in \N ~ \exists m \geq n ~ \forall k \geq m ~ \exists N \in \N ~ \forall M \geq N, \varepsilon >0~ \exists K \geq M, C > 0 \,: \\
				\|z\|_{Z_{m}} \|e'_{|E_M}\|_{E^{\prime}_{M}} \leq C \|z\|_{Z_{k}} \|e'\|_{E^{\prime}_{K}} + \varepsilon \|z\|_{Z_{n}} \|e'_{|E_N}\|_{E^{\prime}_{N}}, \qquad \forall z \in Z_{n}, e' \in E^{\prime}_{K} .  							
\end{gather*}
The Hahn-Banach theorem and \eqref{extra-cond3} yield that
$$
\forall N \in \N \, \exists M \geq N \, \forall K \geq M \, \exists e' \in E'_K\,:\,e'_{\mid E_N} = 0 \quad \mbox{and} \quad e'_{\mid E_M} \neq 0.
$$
By combining the above two properties, we obtain that
\begin{equation}
\label{strict?}
\forall n \in \N \, \exists m \geq n \, \forall k \geq m \, \exists C > 0 \, \forall z \in Z_n\,:\,\|z\|_{Z_m} \leq C\|z\|_{Z_k}.
\end{equation}
We will show that 
\begin{gather*}
\forall n \in \N \, \exists m \geq n \, \forall k \geq m \, \forall M \in \N \, \exists C >0  \\
 \forall T \in B(Z_{m},E_M)  ~ \exists S \in CB(Z_{k}, E_{M+1})  \, : \,	\mbox{$T =S$ on $Z_n$,}
\end{gather*}
which implies \eqref{eq:SuffCondProj1=0}. Let $n \in \N$ be arbitrary and choose $m \geq n$ according to \eqref{strict?}. Let $k \geq m$ and $M \in \N$ be arbitrary.  Since the inclusion map $E_M \to E_{M+1}$ is nuclear, there are sequences $(e_j)_{j \in \N} \subseteq E_M$ and $(f_j)_{j \in \N} \subseteq E_{M+1}$ such that
$$
C_1 = \sup_{j \in \N} \|e_j\|_{E_M} < \infty, \qquad C_2 = \sum_{j =0}^\infty \|f_j\|_{E_{M+1}} < \infty,
$$
and
$$
e = \sum_{j =0}^\infty \langle e, e_j \rangle_{E_M} f_j, \qquad e \in E_M.
$$
Let $T \in B(Z_{m},E_M)$ be arbitrary. For $j \in \N$ we define $z'_j \in Z'_m$ via $\langle z'_j,z\rangle = \langle T(z), e_j \rangle_{M}$, $z \in Z_m$. Note that $\|z'_j \|_{Z'_m} \leq C_1$ for all $j \in\N$. The Hahn-Banach theorem and \eqref{strict?} imply that
that there exists $v'_j \in Z'_k$ such that $v'_{j \mid Z_n} = z'_j$ and $\|v'_j \|_{Z'_k} \leq CC_1$ for all $j \in \N$. Define
$$
S(z) = \sum_{j =0}^\infty \langle v'_j, z \rangle f_{j}, \qquad z \in Z_k.
$$
Then,
$$
\|S(z)\|_{E_{M+1}} \leq \sum_{j = 0}^{\infty} |\langle v'_j, z \rangle| \| f_{j}\|_{E_{M+1}} \leq CC_1C_{2} \|z\|_{Z_k}, \qquad z \in Z_k,
$$
		whence $S \in  CC_1C_2 B(Z_{k}, E_{M+1})$. Furthermore, for all $z \in Z_n$ 
		$$
	S(z) = \sum_{j =0}^\infty \langle v'_j, z \rangle f_{j} = \sum_{j =0}^\infty \langle T(z),e_j \rangle_{M} f_j = T(z). 	
		$$

\noindent (c) Let $A = (a_{n})_{n \in \N} = (a_{n, j})_{n \in \N, j \in I}$. We may suppose that $Z_n  = \ell^1(a_n)$ for $n \in \N$. It suffices to show that, for $n \leq m \leq k$, $N \leq M \leq K$, $C, \varepsilon > 0$ fixed,
			\begin{equation}
			\label{S1} 
			\|c\|_{\ell^1(a_m)} B_{M} \subseteq C \|c\|_{\ell^1(a_k)} B_{K} + \varepsilon \|c\|_{\ell^1(a_n)} B_{N} , \qquad \forall c \in \ell^1(a_n),  \end{equation}
			implies that 
			$$
			\forall T \in B(\ell^1(a_m),E_M)  ~ \exists S \in CB(\ell^1(a_k), E_K), R \in \varepsilon B(\ell^1(a_n), E_N) \, : \, \mbox{$T =S + R$ on $\ell^1(a_n)$.}
			$$		
			For $j \in I$ we define $c^{(j)} = (\delta_{j, j'})_{j' \in I}$. By evaluating \eqref{S1} at $c= c^{(j)}$, we obtain that
			\[ a_{m, j} B_{M} \subseteq C a_{k, j} B_{K} + \varepsilon a_{n, j} B_{N} , \qquad \forall j \in I .  \]
		Let $T \in B(\ell^{1}(a_{m}), E_{M})$ be arbitrary. Define $e_j = T(c^{(j)})$ and note  that $T(c) = \sum_{j \in I} c_{j} e_{j}$ for all $c = (c_{j})_{j \in I} \in \ell^{1}(a_{m})$. Since $e_{j} \in a_{m, j} B_{M}$, we find $f_{j} \in C a_{k, j} B_{K}$ and $g_{j} \in \varepsilon a_{n, j} B_{N}$ such that $e_{j} = f_{j} + g_{j}$. Define $S(c) = \sum_{j \in I} c_{j} f_{j}$,  $c = (c_{j})_{j \in I} \in \ell^{1}(a_{k})$, and $R(c) = \sum_{j \in I} c_{j} g_{j}$,  $c = (c_{j})_{j \in I} \in \ell^{1}(a_{n})$. Then, $S \in CB(\ell^1(a_k), E_K)$ and $R \in \varepsilon B(\ell^1(a_n), E_N)$ are such that $T =S + R$ on $\ell^1(a_n)$.
\\ \\	
\noindent (d) We may suppose that each $Z_{n}$ is a Hilbert space (with inner product $\langle \cdot, \cdot \rangle_n$) and that the inclusion maps $i_n : Z_{n} \rightarrow Z_{n+1}$ are Hilbert-Schmidt.
 It suffices to show that, for $n < m < k$, $N \leq M \leq K$, $C, \varepsilon > 0$ fixed,
			$$
			\|z\|_{Z_{m}} B_{M} \subseteq C \|z\|_{Z_{k+1}} B_{K} + \varepsilon \|z\|_{Z_{n+1}} B_{N} , \qquad \forall z \in Z_{n}, $$
			implies that 
			$$
			\forall T \in B(Z_{m},E_M)  ~ \exists S \in CC_0B(Z_{k}, E_K), R \in \varepsilon C_1 B(Z_{n}, E_N) \, : \,
					\mbox{$T =S + R$ on $Z_n$,}
			$$		
where $C_0 =\nu_{2}(i_{k})  $ and $C_1 = \nu_{2}(i_{n})$.
By considering a singular value decomposition of the compact inclusion map $Z_{n} \rightarrow Z_{k}$, we find a complete orthonormal system $(z_{j})_{j \in I}$ in $Z_{n}$ that is orthogonal in $Z_{k}$. 
Put $w_{j} = z_{j} / \|z_j\|_{Z_{k}}$ for $j \in I$. Then $(w_{j})_{j \in I}$ is an orthonormal system in $Z_{k}$ and note that
\begin{equation}
\label{eq12}
 \ev{z}{w_{j}}_{k} = \sum_{j' \in I} \ev{z}{z_{j'}}_{n} \ev{z_{j'}}{w_{j}}_{k} = \ev{z}{z_{j}}_{n} \ev{z_{j}}{w_{j}}_{k} = \| z_j\|_{Z_k} \ev{z}{z_{j}}_{n}, \quad  \forall z \in Z_{n}.
 \end{equation}
Let $T \in B(Z_{m}, E_{M})$ be arbitrary. Set  $e_{j} = T(z_j) \in \| z_j\|_{Z_m} B_{M}$. There are $f_{j} \in C \|z_j\|_{Z_{k+1}} B_{K} $ and $g_j \in \varepsilon \|z_j\|_{Z_{n+1}} B_{N}$ such that $e_{j} = f_{j} + g_{j}$. Define 
\begin{gather*}
S(z) = \sum_{j \in I} \frac{1}{\| z_j\|_{Z_k}} \langle z,w_j \rangle_k f_{j}, \qquad z \in Z_k, \\
 R(z) = \sum_{j \in I}  \langle z,z_j \rangle_n g_{j}, \qquad  z \in Z_n.
 \end{gather*}
  For all $z \in Z_k$ it holds that
			\begin{align*} 
				\|S(z)\|_{E_{K}} 
				&\leq \sum_{j \in I} |\ev{z}{w_{j}}_{k}| \left(  \frac{1}{\| z_j\|_{Z_k}} \|f_{j}\|_{E_{K}} \right)  \\
				&\leq C \left( \sum_{j \in I} |\ev{z}{w_{j}}_{k}|^{2} \right)^{1/2} \left( \sum_{j \in I} \|w_{j}\|^{2}_{Z_{k + 1}} \right)^{1/2} \\
				&\leq C\nu_{2}(i_{k})  \|z\|_{Z_{k}} ,  
			\end{align*}
whence $S \in C\nu_{2}(i_{k})B(Z_{k}, E_{K})$. Similarly, we find that for all $z \in Z_n$
			\begin{align*}
				\|R(z)\|_{E_{N}}
				&\leq \sum_{j \in I} |\ev{z}{z_{j}}_{n}| \|g_{j}\|_{E_{N}} \\
				&\leq \varepsilon \left( \sum_{j \in I} |\ev{z}{z_{j}}_{n}|^{2} \right)^{1/2} \left( \sum_{j \in I} \|z_{j}\|^{2}_{Z_{n + 1}} \right)^{1/2} \\
				&= \varepsilon \nu_{2}(i_{n}) \|z\|_{Z_{n}},
			\end{align*}
		and thus $R \in \varepsilon \nu_{2}(i_{n}) B(Z_{n}, E_{N})$.  From \eqref{eq12} we obtain that for all $z \in Z_n$
					\begin{align*} 
				T(z)
				&= \sum_{j \in I} \ev{z}{z_{j}}_{n} e_{j}  = \sum_{j \in I} \ev{z}{z_{j}}_{n} f_{j}  + \sum_{j \in I} \ev{z}{z_{j}}_{n} g_{j}  \\
				&= \sum_{j \in I} \frac{1}{\| z_j\|_{Z_k}}\ev{z}{w_{j}}_{k} f_{j}  + \sum_{j \in I} \ev{z}{z_{j}}_{n} g_{j}  = S(z) + R(z).
			\end{align*}
	\end{proof}

We are ready to show Theorem \ref{t:MainExtensionThm}.

\begin{proof}[Proof of Theorem \ref{t:MainExtensionThm}]
 The sufficiency of  $(\condS)$ follows from Theorem \ref{t:MittagLeffler}, Corollary \ref{c:SpecificCasesAreLocallyEAcyclic}, and Lemma \ref{l:Siss}. Now suppose that $Z$ is weakly $E$-acyclic. By Theorem \ref{t:MittagLeffler}, Corollary \ref{c:SpecificCasesAreLocallyEAcyclic}, and Lemma \ref{l:Proj1=0ImpliesS},  $(E,Z)$ satisfies $(\condWS)$.  In case (a), (b), or (d) holds, Remark \ref{remWSS}(i) implies that  $(E, Z)$ satisfies $(\condS)$. Assume that (c) holds. Corollary \ref{c:wa} yields that $Z \cong k^1(A)$ is weakly acyclic. By \cite[Theorem 5.6]{V-RegPropLFSp}, $Z$ is acylic. Therefore, $(E,Z)$ satisfies $(\condS)$ because of Proposition \ref{l:S=WS+acyclic}.
 \end{proof}
\section{The conditions $(\condA)$ and $(\DOmega)$}\label{sect:sepcond}
Let $E$ be a lcHs with a  fundamental sequence of bounded sets and let $Z$ be an $(LB)$-space. In this section, we discuss separate conditions on $E$ and $Z$  ensuring that the pair $(E, Z)$ satisfies $(\condS)$. We start with the condition $(\condA)$, already mentioned in the introduction.
\begin{definition}
		A lcHs $E$ with a fundamental increasing sequence  of bounded sets $(B_{N})_{N \in \N}$ is said to satisfy $(\condA)$ \cite{V-VektorDistrRandHolomorpherFunk} if
			\[
			\begin{gathered}
				\exists N \in \N ~ \forall M \geq N~\forall  \nu > 0 ~ \exists K \geq M, C > 0 ~ \forall r > 0 \, : \,  \\
				B_{M} \subseteq  r B_{K} + \frac{C}{r^{\nu}} B_{N}.
				\end{gathered}
			\]
		If  $E$ satisfies the previous condition with ``$\forall \nu > 0$" replaced by ``$\exists \nu > 0$", then $E$ is said to satisfy $(\conduA)$ \cite{B-D-V-InterpolVVRealAnalFunc}.
	\end{definition}
	\noindent The conditions $(\condA)$ and  $(\conduA)$ do not depend on the choice of the fundamental increasing sequence of bounded sets $(B_{N})_{N \in \N}$ in $E$.
	
	\begin{remark} \label{remark-DN}
	 Recall \cite{M-V-IntroFuncAnal} that a Fr\'{e}chet space $F$ with a fundamental increasing sequence of seminorms $(\| \,\cdot \,\|_{n})_{n \in \N}$ is said to satisfy $(\DN)$ if 
							\[
	\begin{gathered}
						\exists n \in \N ~ \forall m \geq n ~\forall  \theta \in (0, 1) ~ \exists k \geq m, C > 0   \, : \\  \|x\|_{m} \leq C \|x\|^{\theta}_{n} \|x\|^{1 - \theta}_{k}, \qquad \forall x \in F.
					\end{gathered}
					\]
				If  $F$ satisfies the previous condition with ``$\forall \theta \in (0, 1)$"  replaced by ``$\exists \theta \in (0, 1)$", then $F$ is said to satisfy $(\uDN)$ \cite{M-V-IntroFuncAnal}. The conditions $(\DN)$ and  $(\uDN)$ do not depend on the choice of the fundamental increasing sequence of seminorms $(\| \,\cdot \,\|_{n})_{n \in \N}$ in $F$.
 By \cite[Lemma 1.4]{V-CharUnterraums} and \cite[Proposition 2.2]{B-D-V-InterpolVVRealAnalFunc}, $F$ satisfies $(\DN)$ ($(\uDN)$) if and only if $F^{\prime}$ satisfies $(\condA)$ ($(\conduA)$).
	\end{remark}

The next condition is heavily inspired by the condition $(\Omega)$ for Fr\'echet spaces \cite{M-V-IntroFuncAnal} (see also Section \ref{sec:ExtSESFrechet} below).
\begin{definition}
		An  inductive spectrum of Banach spaces $\mathscr{Z} = (Z_n)_{n \in \N}$ is said to satisfy $(\DOmega)$ if
			\[ 
				\begin{gathered}
					\forall n \in \N ~ \exists m \geq n ~ \forall k \geq m ~ \exists \theta \in (0, 1), C > 0 \, : \\
					 \|z\|_{Z_{m}} \leq C \|z\|_{Z_{n}}^{\theta} \|z\|_{Z_{k}}^{1 - \theta}, \qquad \forall z \in Z_n. 
				\end{gathered}		
			\]
	\end{definition}
\noindent If  $\mathscr{Z}$ and  $\widetilde{\mathscr{Z}}$ are two equivalent inductive spectra of Banach spaces, then  $\mathscr{Z}$ satisfies $(\DOmega)$  if and only if $\widetilde{\mathscr{Z}}$ does so.
Let $Z = \Ind \mathscr{Z}$ be an $(LB)$-space, with  $\mathscr{Z}$ an inductive spectrum of Banach spaces. We say that $Z$ satisfies $(\DOmega)$   if $\mathscr{Z}$ does so. Since all inductive spectra of Banach spaces defining $Z$ are equivalent, this definition does not depend on the choice of the spectrum $ \mathscr{Z}$ of Banach spaces defining $Z$.

\begin{lemma}
		\label{t:A+Omega=E}
		Let $E$ be a lcHs with a  fundamental increasing sequence of bounded sets $(B_{N})_{N \in \N}$ and let $\mathscr{Z} = (Z_n)_{n \in \N}$  be an inductive spectrum of Banach spaces. If $E$ satisfies $(\condA)$ and $\mathscr{Z}$ satisfies $(\DOmega)$, then $(E, \mathscr{Z})$ satisfies $(\condS)$.
	\end{lemma}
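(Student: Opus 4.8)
The plan is to verify $(\condS)$ directly, by feeding the free scaling parameter $r$ in $(\condA)$ with a quantity tailored to each vector $z$ and then using the interpolation inequality from $(\DOmega)$ to control the resulting coefficients. As in the earlier proofs, I may assume that every $B_N$ is absolutely convex and closed.

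First I would fix $n \in \N$ and invoke $(\DOmega)$ to produce $m \geq n$ such that for every $k \geq m$ there are $\theta = \theta(k) \in (0,1)$ and $C_0 = C_0(k) > 0$ with $\|z\|_{Z_m} \leq C_0 \|z\|_{Z_n}^{\theta}\|z\|_{Z_k}^{1-\theta}$ for all $z \in Z_n$; this $m$ is the one demanded by $(\condS)$. For the index $N$ in $(\condS)$ I would take the single index $N$ furnished by the leading existential quantifier of $(\condA)$. Given $k \geq m$, the decisive choice is to set $\nu = (1-\theta)/\theta > 0$; then, for any $M \geq N$ and $\varepsilon > 0$, condition $(\condA)$ applied to this $M$ and this $\nu$ yields $K \geq M$ and $C_1 > 0$ with $B_M \subseteq r B_K + C_1 r^{-\nu} B_N$ for all $r > 0$.

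Next, for a fixed $z \in Z_n \setminus \{0\}$ (the case $z = 0$ being trivial), I would pick $r = r(z)$ so that the second term becomes exactly $\varepsilon \|z\|_{Z_n} B_N$, that is
\[
  r = \left( \frac{C_1 \|z\|_{Z_m}}{\varepsilon \|z\|_{Z_n}} \right)^{1/\nu}.
\]
Multiplying the $(\condA)$-inclusion by $\|z\|_{Z_m}$ then gives $\|z\|_{Z_m} B_M \subseteq r\|z\|_{Z_m} B_K + \varepsilon \|z\|_{Z_n} B_N$, so everything reduces to the scalar estimate $r\|z\|_{Z_m} \leq C \|z\|_{Z_k}$ with $C$ independent of $z$. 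Substituting $r$ and raising the $(\DOmega)$-inequality to the power $1 + 1/\nu$, the exponents carried by $\|z\|_{Z_n}$ and $\|z\|_{Z_k}$ come out to be $\theta(1+1/\nu) - 1/\nu$ and $(1-\theta)(1+1/\nu)$; the choice $\nu = (1-\theta)/\theta$ is precisely what makes the first vanish and the second equal $1$, delivering $r\|z\|_{Z_m} \leq C\|z\|_{Z_k}$ with $C = (C_1/\varepsilon)^{1/\nu} C_0^{1+1/\nu}$. Since $B_K$ is absolutely convex, this upgrades to $r\|z\|_{Z_m} B_K \subseteq C\|z\|_{Z_k} B_K$, and the inclusion required by $(\condS)$ follows.

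The estimates are routine; the one genuine point of care is the bookkeeping of quantifiers. The parameter $\nu$ must be fixed from $\theta$ (hence from $k$) before $M$ and $\varepsilon$ enter, which is compatible with the order $\forall k ~ \exists N ~ \forall M, \varepsilon$ in $(\condS)$ read against $\forall M ~ \forall \nu ~ \exists K$ in $(\condA)$; and one must check that the single index $N$ coming from $(\condA)$ serves as the $N$ of $(\condS)$ uniformly in $M$. I expect this alignment of quantifier patterns, rather than any analytic difficulty, to be the crux.
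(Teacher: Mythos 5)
Your proposal is correct and matches the paper's proof essentially verbatim: the paper likewise fixes $\nu = (1-\theta)/\theta$ in $(\condA)$, takes the same $z$-dependent scaling $r_z = \bigl(\tfrac{C_1}{\varepsilon}\tfrac{\|z\|_{Z_m}}{\|z\|_{Z_n}}\bigr)^{\theta/(1-\theta)}$, and bounds $r_z\|z\|_{Z_m}$ by $C\|z\|_{Z_k}$ with the identical constant $C = (C_0 C_1^{\theta})^{1/(1-\theta)}\varepsilon^{-\theta/(1-\theta)}$. Your quantifier bookkeeping (in particular, taking the single $N$ from the leading existential of $(\condA)$ and fixing $\nu$ from $\theta(k)$ before $M$ and $\varepsilon$ are chosen) is also exactly how the paper's reduction is organized.
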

	
	\begin{proof}
 We may assume that each $B_N$ is absolutely convex. It suffices to show that, for $n \leq m \leq k$, $N \leq M \leq K$, $C_{0}, C_{1} > 0$, and $\theta \in (0, 1)$ fixed, it holds that 
			\[ \|z\|_{Z_m} \leq C_{0}  \|z\|^{\theta}_{Z_n} \|z\|^{1 - \theta}_{Z_k}, \qquad \forall z \in Z_n,\]
and
	\[ B_{M} \subseteq  r B_{K} + \frac{C_{1}}{r^{\frac{1 - \theta}{\theta}}} B_{N}, \qquad \forall r > 0, \]
	imply that for all $\varepsilon >0$ there is $C >0$ such that 
\[ \|z\|_{Z_m} B_{M} \subseteq C \|z\|_{Z_k} B_{K} +  \varepsilon \|z\|_{Z_n} B_{n}, \qquad  \forall z \in Z_n.\]
Let $\varepsilon > 0$ be arbitrary. For $z \in Z_n \backslash \{0\}$ we set
			\[ r_{z} = \left( \frac{C_1}{\varepsilon} \frac{\|z\|_{Z_m}}{\|z\|_{{Z_n}}} \right)^{\frac{\theta}{1 - \theta}}. \]
			Note that
			\[ r_{z} \leq  \frac{(C_{0}C_1^\theta)^{1/(1 - \theta)}}{ \varepsilon^{\theta/(1-\theta)}} \frac{\|z\|_{{Z_k}}}{\|z\|_{{Z_m}}}, \qquad \forall z \in Z_n \backslash \{0\}.  \]
Consequently,		
			\[ B_{M} \subseteq r_{z} B_{K} + C_{1} r_{z}^{- \frac{1 - \theta}{\theta}} B_{N} \subseteq  \frac{(C_{0}C_1^\theta)^{1/(1 - \theta)}}{ \varepsilon^{\theta/(1-\theta)}}  \frac{\|z\|_{{Z_k}}}{\|z\|_{{m}}} B_{K} + \varepsilon \frac{\|z\|_{{Z_n}}}{\|z\|_{{Z_m}}} B_{N}, \qquad \forall z \in Z_n \backslash \{0\}, \]
		which implies the result.
	\end{proof}

We then have:	
\begin{theorem}
		\label{cor:AO}
		Let $E$ be a locally complete lcHs with a fundamental sequence of bounded sets and let $Z$ be an $(LB)$-space.  Suppose that one of the assumptions (a)-(d) from  Corollary  \ref{c:SpecificCasesAreLocallyEAcyclic}  is satisfied.
		If $E$ satisfies $(\condA)$ and $Z$ satisfies $(\DOmega)$, then $Z$ is weakly $E$-acyclic. 
			\end{theorem}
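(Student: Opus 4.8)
The plan is to chain together the results already established in this section and in Section~\ref{sect:main}; the theorem is essentially a bookkeeping combination of them, since all the analytic content has by now been extracted. First I would invoke Lemma~\ref{t:A+Omega=E}: because $E$ satisfies $(\condA)$ and $Z$ (equivalently, any defining inductive spectrum of Banach spaces $\mathscr{Z}$) satisfies $(\DOmega)$, the pair $(E,Z)$ satisfies $(\condS)$. This is the step carrying the real weight, but it is packaged entirely inside Lemma~\ref{t:A+Omega=E}, whose proof only requires the elementary interpolation-type manipulation that turns the multiplicative estimate of $(\DOmega)$ together with the decomposition estimate of $(\condA)$ into the additive $(\condS)$-estimate.

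Having $(\condS)$ in hand, I would then feed it into the sufficiency machinery developed for Theorem~\ref{t:MainExtensionThm}. Concretely, under any one of the hypotheses (a)--(d) of Corollary~\ref{c:SpecificCasesAreLocallyEAcyclic}, Lemma~\ref{l:Siss} upgrades $(\condS)$ to the vanishing $\Proj^{1}\mathscr{L}(Z,E)=0$. Simultaneously, Corollary~\ref{c:SpecificCasesAreLocallyEAcyclic} guarantees that, under the very same hypotheses, $Z$ is locally weakly $E$-acyclic. These two facts are precisely the input required by Theorem~\ref{t:MittagLeffler}, which then yields that $Z$ is weakly $E$-acyclic, completing the argument.

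The one point I would be careful about is the matching of hypotheses. The statement is phrased with cases (a)--(d) of Corollary~\ref{c:SpecificCasesAreLocallyEAcyclic} rather than the slightly more restrictive cases of Theorem~\ref{t:MainExtensionThm} (which exclude, e.g., $E\cong\ell^{\infty}$). This is legitimate because we need only the implication ``$(\condS)\Rightarrow$ weakly $E$-acyclic'', and this direction---unlike its converse---goes through for all of (a)--(d) via Lemma~\ref{l:Siss}, Corollary~\ref{c:SpecificCasesAreLocallyEAcyclic}, and Theorem~\ref{t:MittagLeffler}, with no normability or dimensionality restriction. Hence there is no genuine obstacle here; the only care needed is to cite the intermediate lemmas directly instead of quoting Theorem~\ref{t:MainExtensionThm} as a black box, so as not to inherit its extra assumptions.
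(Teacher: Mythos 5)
Your proposal is correct, and it runs on exactly the same machinery as the paper; the only divergence is how the hypothesis mismatch you flag at the end is resolved. You unpack the sufficiency direction of Theorem~\ref{t:MainExtensionThm} into its ingredients---Lemma~\ref{t:A+Omega=E} to get $(\condS)$, then Lemma~\ref{l:Siss} for $\Proj^{1}\mathscr{L}(Z,E)=0$, Corollary~\ref{c:SpecificCasesAreLocallyEAcyclic} for local weak $E$-acyclicity, and Theorem~\ref{t:MittagLeffler} to conclude---precisely so that the unrestricted cases (a)--(d) apply, with no normability or dimensionality caveat. The paper instead does quote Theorem~\ref{t:MainExtensionThm} as a black box and patches the excluded corner cases ($E\cong\ell^{\infty}(I)$, arising from a normable $k^{\infty}(A)$ or a finite-dimensional $(LN)$-space) differently: it observes that $(\condS)$ forces $Z$ to be acyclic by Proposition~\ref{l:S=WS+acyclic}, and then invokes Proposition~\ref{t:MainExtensionThm2}(i), which says that acyclicity is equivalent to weak $\ell^{\infty}(I)$-acyclicity for every index set $I$. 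Both resolutions are valid; yours is arguably cleaner in that it cites only implications proved under the hypotheses actually assumed, while the paper's is shorter but relies on the reader noticing (cf.\ Remark~\ref{remWSS}(ii)) that the leftover cases are exactly the $\ell^{\infty}$-type ones covered by Proposition~\ref{t:MainExtensionThm2}. Your point of caution was exactly the right one, and your way of discharging it is sound.
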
	
	\begin{proof}
	By Lemma \ref{t:A+Omega=E}, $(E,Z)$ satisfies $(\condS)$. In particular, $Z$ is acyclic (Proposition \ref{l:S=WS+acyclic}). The result therefore follows from Theorem  \ref{t:MainExtensionThm} and Proposition \ref{t:MainExtensionThm2}.
	\end{proof}



\section{The splitting of  exact sequences of $(LB)$-spaces} \label{sect:splitting} 
In this short section, we discuss the relation between the notion of weak $E$-acyclicity and the splitting of exact sequences of $(LB)$-spaces. 

\begin{proposition}
		\label{p:Splitting=EAcyclic}
		Let $E$ and $Z$ be $(LB)$-spaces. Then, $Z$ is weakly $E$-acyclic if and only $Z$ is weakly acyclic and  every exact sequence
			\begin{equation}
			 	\label{eq:SplittingSES}
			 	\SES{E}{Y}{Z}{j}{} 
			\end{equation}
		of $(LB)$-spaces splits.
	\end{proposition}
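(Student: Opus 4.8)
The plan is to prove the two implications separately, reserving a pushout construction for the nontrivial direction. Suppose first that $Z$ is weakly $E$-acyclic. That $Z$ is then weakly acyclic is exactly Corollary~\ref{c:wa}. To see that every exact sequence $0\to E\xrightarrow{j}Y\to Z\to 0$ of $(LB)$-spaces splits, I would apply the definition of weak $E$-acyclicity to this sequence itself, with $X=E$ and $\iota=j$: the map $j_*\colon L(Y,E)\to L(E,E)$ is surjective, so $\id_E$ lies in its image, giving $S\in L(Y,E)$ with $S\circ j=\id_E$. Thus $j$ has a continuous linear left inverse and the sequence splits. This half is routine.

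For the converse, assume $Z$ is weakly acyclic and that every exact sequence of $(LB)$-spaces with kernel $E$ splits, and let $0\to X\xrightarrow{\iota}Y\xrightarrow{Q}Z\to 0$ be an arbitrary exact sequence of $(LB)$-spaces; I must extend an arbitrary $T\in L(X,E)$ along $\iota$. The idea is to form the pushout of $T$ and $\iota$: put $M=\{(T(x),-\iota(x)):x\in X\}\subseteq E\oplus Y$, let $P=(E\oplus Y)/M$ with canonical maps $j_E\colon E\to P$, $j_Y\colon Y\to P$, and let $\bar Q\colon P\to Z$, $[(e,y)]\mapsto Q(y)$, be the induced map. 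A direct algebraic verification (using $\iota(X)=\ker Q$) shows that $0\to E\xrightarrow{j_E}P\xrightarrow{\bar Q}Z\to 0$ is algebraically exact with continuous maps. Granting that $P$ is an $(LB)$-space, this is an exact sequence of $(LB)$-spaces with kernel $E$, so by hypothesis it splits: there is $\sigma\in L(P,E)$ with $\sigma\circ j_E=\id_E$. Then $\tilde T:=\sigma\circ j_Y\in L(Y,E)$ satisfies $\tilde T\circ\iota=\sigma\circ j_Y\circ\iota=\sigma\circ j_E\circ T=T$, so $\iota_*$ is surjective, as required.

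The main obstacle is to guarantee that $P$ is an $(LB)$-space, which comes down to showing that $M$ is closed in $E\oplus Y$; indeed, a quotient of an $(LB)$-space by a closed subspace is again an $(LB)$-space (writing $E\oplus Y=\Ind(E_n\oplus Y_n)$ one has $P=\Ind\big((E_n\oplus Y_n)/((E_n\oplus Y_n)\cap M)\big)$, a Hausdorff inductive limit of Banach spaces). Here is where the weak acyclicity hypothesis enters, compensating for the fact that we do not have full acyclicity. Since $\iota(X)=\ker Q$ is closed in $Y$, the set $M$ is essentially the graph of the linear map $R=T\circ\iota^{-1}\colon\iota(X)\to E$. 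By Corollary~\ref{p:AcyclicEquiv}, weak acyclicity of $Z$ makes $\iota$ a weak topological embedding, so $\iota^{-1}\colon\iota(X)\to X$ is continuous for the weak topologies; as $T$ is weakly continuous, $R$ is weakly continuous. A net argument then closes the gap: if $(T(x_\alpha),-\iota(x_\alpha))\to(e,y)$ in $E\oplus Y$, then $\iota(x_\alpha)$ converges (strongly, hence weakly) to $-y\in\iota(X)$, so weak continuity of $R$ together with uniqueness of weak limits forces $e=R(-y)$, placing $(e,y)$ in $M$. I expect this weak-topology argument for the closedness of $M$ to be the only genuinely delicate point.
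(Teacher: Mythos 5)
Your proposal is correct and follows essentially the same route as the paper: the forward direction via Corollary~\ref{c:wa} and applying weak $E$-acyclicity to the sequence itself, and the converse via the same pushout construction (the paper forms $\widetilde{Y}=(Y\times E)/L$ with $L=\{(\iota(x),T(x)) \mid x \in X\}$, identical to your $(E\oplus Y)/M$ up to sign conventions), with closedness of $L$ deduced from Corollary~\ref{p:AcyclicEquiv} exactly as you do. Your weak-topology net argument simply spells out the details the paper compresses into the assertion that $\iota$ being a weak topological embedding with closed image makes $L$ closed.
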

	
	\begin{proof}
		Suppose first that $Z$ is weakly $E$-acyclic. $Z$ is weakly acyclic by Corollary \ref{c:wa}.  Since the sequence \eqref{eq:SplittingSES} is exact,  we have that $j_*: L(Y, E) \to L(E, E)$ is surjective. Hence, there is $T \in L(Y, E)$ such that $T \circ j = \operatorname{id}_E$. This means that $j$ has a left inverse and thus that the sequence \eqref{eq:SplittingSES} splits.  Conversely, suppose that  $Z$ is weakly acyclic and that each exact sequence \eqref{eq:SplittingSES} of $(LB)$-spaces splits. Let 
			\[ \SES{X}{Y}{Z}{\iota}{Q}  \]
			be an arbitrary exact sequence of $(LB)$-spaces and let $T \in L(X, E)$ be arbitrary.  Consider the subspace $L = \{ (\iota(x), T(x)) \mid x \in X \}$ of $Y \times E$. Since $\iota$ is a weak topological embedding (Corollary \ref{p:AcyclicEquiv}) with closed image, $L$ is closed. Hence,  $\widetilde{Y} = (Y \times E) / L$ is an $(LB)$-space. We denote by $q: Y \times E \to \widetilde{Y}$ the canonical quotient map. Define $j : E \rightarrow \widetilde{Y}, e \mapsto q((0, e))$ and let $\pi : \widetilde{Y} \rightarrow Z$ be the continuous linear map such that $\pi(q((y,e))) = Q(y)$ for all $(y,e) \in Y \times E$. We obtain the following exact sequence of $(LB)$-spaces 
			\[ \SES{E}{\widetilde{Y}}{Z}{j}{\pi}.  \]
		Consequently, there is some $P \in L(\widetilde{Y}, E)$ such that $P \circ j = \id_{E}$. Consider the continuous linear map $S: Y \to E, \, y \mapsto P(q((-y,0)))$ and note that $S \circ \iota = T$.
	\end{proof}
	
	\begin{remark}
	Let $E$ and $Z$ be $(LB)$-spaces and assume that $Z$ is weakly acyclic. The fact that every \emph{topologically} exact sequence of the form \eqref{eq:SplittingSES} of $(LB)$-spaces splits does not imply that $Z$ is weakly $E$-acyclic.
As a counterexample, take any $(LB)$-space $Z$ that is weakly acyclic but not acyclic (which exists, see e.g. \cite[\S 5]{V-RegPropLFSp}). Proposition \ref{t:MainExtensionThm2}(i) implies that $Z$ is not weakly $\ell^{\infty}(I)$-acyclic for some index set $I$. Set $E = \ell^{\infty}(I)$. However, by the Hahn-Banach theorem, each topologically short exact sequence of the form \eqref{eq:SplittingSES} automatically splits.
	\end{remark}

	In combination with Theorems  \ref{t:MainExtensionThm} and \ref{cor:AO}, Proposition \ref{p:Splitting=EAcyclic} yields the following two corollaries.

	\begin{corollary}
		\label{t:MainSplittingThm}
		Let $E$ and $Z$  be $(LB)$-spaces and assume that $E$ is locally complete.  Suppose that one of the assumptions (a)-(d) from Theorem  \ref{t:MainExtensionThm} is satisfied.
		Then, the following statements are equivalent:
			\begin{itemize}
			\item[(i)] $Z$ is weakly acylic and every exact sequence \eqref{eq:SplittingSES} of $(LB)$-spaces splits.
			\item[(ii)] $Z$ is weakly $E$-acylic.
				\item[(iii)] $(E, Z)$ satisfies $(\condS)$.
			\end{itemize}
	\end{corollary}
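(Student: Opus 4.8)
The plan is to obtain all three equivalences by directly stitching together the two structural results already proved, so essentially no new argument is needed beyond verifying that their hypotheses align. Since $E$ and $Z$ are $(LB)$-spaces, $E$ carries a fundamental sequence of bounded sets automatically, and it is assumed locally complete; together with the standing assumption that one of the cases (a)--(d) holds, this means the full hypotheses of Theorem \ref{t:MainExtensionThm} are in force. I would therefore invoke Theorem \ref{t:MainExtensionThm} first to record the equivalence of (ii) and (iii), namely that $Z$ is weakly $E$-acyclic if and only if $(E,Z)$ satisfies $(\condS)$.

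For the equivalence of (i) and (ii), I would apply Proposition \ref{p:Splitting=EAcyclic}. That proposition requires only that $E$ and $Z$ be $(LB)$-spaces, which is exactly our situation, and it states verbatim that $Z$ is weakly $E$-acyclic if and only if $Z$ is weakly acyclic and every exact sequence of the form \eqref{eq:SplittingSES} of $(LB)$-spaces splits. This is precisely the assertion (i) $\Leftrightarrow$ (ii). Chaining the two equivalences then yields (i) $\Leftrightarrow$ (ii) $\Leftrightarrow$ (iii), completing the proof.

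The only point demanding any attention---and thus the main, albeit minor, obstacle---is the bookkeeping of hypotheses: I must note that a locally complete $(LB)$-space is in particular a locally complete lcHs with a fundamental sequence of bounded sets, so that Theorem \ref{t:MainExtensionThm} genuinely applies, and that the cases (a)--(d) cited in the corollary are literally those of that theorem (with the non-normability and infinite-dimensionality restrictions in (a) and (b) carried along unchanged). Once these routine compatibility checks are in place, the corollary follows immediately with no further computation.
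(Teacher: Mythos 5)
Your proposal is correct and coincides with the paper's own (implicit) argument: the paper derives this corollary exactly by combining Proposition \ref{p:Splitting=EAcyclic} for the equivalence (i) $\Leftrightarrow$ (ii) with Theorem \ref{t:MainExtensionThm} for (ii) $\Leftrightarrow$ (iii), and your hypothesis bookkeeping (every $(LB)$-space has a fundamental sequence of bounded sets, so the assumed local completeness puts Theorem \ref{t:MainExtensionThm} in force) is the only point that needs checking.
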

	  	
	\begin{corollary}
		\label{c:Splitting(A)and(DOmega)}
		Let $E$ and $Z$  be $(LB)$-spaces and assume that $E$ is locally complete.  Suppose that one of the assumptions (a)-(d) from  Corollary  \ref{c:SpecificCasesAreLocallyEAcyclic}  is satisfied.
		If $E$ satisfies $(\condA)$ and $Z$ satisfies $(\DOmega)$, then every every exact sequence \eqref{eq:SplittingSES} of $(LB)$-spaces splits.
			\end{corollary}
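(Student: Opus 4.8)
The plan is to deduce this splitting result directly from the machinery already in place, by combining Theorem \ref{cor:AO} with the forward implication of Proposition \ref{p:Splitting=EAcyclic}. The hypotheses of the corollary are precisely the ones needed to feed into Theorem \ref{cor:AO}, so almost all of the analytic content has already been discharged upstream.

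First I would verify that the hypotheses of Theorem \ref{cor:AO} are met. Since $E$ is an $(LB)$-space it carries a fundamental sequence of bounded sets, and by assumption it is locally complete; combined with one of the assumptions (a)--(d) from Corollary \ref{c:SpecificCasesAreLocallyEAcyclic}, with the fact that $E$ satisfies $(\condA)$, and with $Z$ satisfying $(\DOmega)$, every hypothesis of Theorem \ref{cor:AO} holds. Applying that theorem yields that $Z$ is weakly $E$-acyclic. Then I would invoke the ``only if'' direction of Proposition \ref{p:Splitting=EAcyclic}, which asserts that an $(LB)$-space $Z$ is weakly $E$-acyclic if and only if $Z$ is weakly acyclic \emph{and} every exact sequence \eqref{eq:SplittingSES} of $(LB)$-spaces splits. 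In particular, the weak $E$-acyclicity of $Z$ forces every such sequence to split, which is exactly the assertion.

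Since both Theorem \ref{cor:AO} and Proposition \ref{p:Splitting=EAcyclic} have already been established, the argument is a short two-step assembly and there is no genuine obstacle at this stage. If anything requires care it is purely bookkeeping: one must check that the list of assumptions (a)--(d) invoked here (those of Corollary \ref{c:SpecificCasesAreLocallyEAcyclic}) matches the list required by Theorem \ref{cor:AO}, and that the $(LB)$-space hypothesis on $E$, together with the explicit local-completeness assumption, supplies everything Theorem \ref{cor:AO} needs. All the real work sits earlier in the paper---in Lemma \ref{t:A+Omega=E}, which converts $(\condA)$ and $(\DOmega)$ into $(\condS)$; in Theorem \ref{t:MainExtensionThm} together with Lemma \ref{l:Siss}, which passes from $(\condS)$ to $\Proj^{1}\mathscr{L}(Z,E)=0$ and hence to weak $E$-acyclicity; and in the push-out construction carried out in the proof of Proposition \ref{p:Splitting=EAcyclic}.
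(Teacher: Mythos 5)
Your proof is correct and follows exactly the paper's intended route: the paper derives this corollary by combining Theorem \ref{cor:AO} (which gives weak $E$-acyclicity of $Z$, noting an $(LB)$-space $E$ automatically has a fundamental sequence of bounded sets) with the forward direction of Proposition \ref{p:Splitting=EAcyclic}. Your bookkeeping check that the assumptions (a)--(d) cited are those of Corollary \ref{c:SpecificCasesAreLocallyEAcyclic} (rather than the slightly stronger list in Theorem \ref{t:MainExtensionThm}) is exactly the right point of care, and it goes through.
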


	%


\section{Surjectivity of tensorized mappings}
\label{sec:ExtSESFrechet}
In this section, we use the results from Sections \ref{sect:main} and \ref{sect:sepcond} to study the surjecitivity of tensorized maps.

Let $F$ be a Fr\'{e}chet space with a fundamental decreasing sequence $(U_{n})_{n \in \N}$ of absolutely convex neighbourhoods of $0$. Then,  $((F^{\prime})_{U^{\circ}_{n}})_{n\in\N}$ is an inductive spectrum of Banach spaces with $F' =  \bigcup_{n \in \N}(F^{\prime})_{U^{\circ}_{n}}$. Recall that $F$ is  distinguished if  $F' =  \Ind ((F^{\prime})_{U^{\circ}_{n}})_{n\in\N}$.  Every quasinormable Fr\'echet space $F$ is distinguished \cite[Corollary 26.19]{M-V-IntroFuncAnal} and, in this case, $F'$ is acylic \cite[Proposition 7.5]{P-HomMethTheoryLCS} (this also follows from \cite[Lemma 26.15]{M-V-IntroFuncAnal}, where it is shown that the inductive spectrum of Banach spaces $((F^{\prime})_{U^{\circ}_{n}})_{n\in\N}$ satisfies condition $(\condQ)$). In the next result, we will use that every Fr\'echet-Schwartz space is Montel and quasinormable.


	
%

	\begin{theorem}
		\label{t:SESExtensionFrechet}
		Let $E$ be a locally complete lcHs with a fundamental sequence of bounded sets and let 
			\begin{equation}
				\label{eq:SESFrechet} 
				\SES{F}{G}{H}{}{Q} 
			\end{equation}
		be an exact sequence of  Fr\'echet-Schwartz spaces. Suppose that one of the following assumptions is satisfied:
			\begin{itemize} 
			\item[(a)] $E \cong k^{\infty}(A)$.
			\item[(b)]   $E$  is an $(LN)$-space.
			\item[(c)]  $F \cong \lambda_0(B)$.
			\item[(d)]  $F$ is nuclear.
			\end{itemize}
		If $(E, F')$ satisfies $(\condWS)$, then the map $Q\varepsilon \operatorname{id}_E: G \varepsilon E \to H\varepsilon E$ is surjective.
	\end{theorem}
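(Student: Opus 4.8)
The plan is to dualize the sequence \eqref{eq:SESFrechet} and reduce the surjectivity of $Q \varepsilon \operatorname{id}_E$ to the weak $E$-acyclicity of the $(LB)$-space $F'$, which can then be controlled through the results of Sections \ref{sect:main} and \ref{sect:sepcond}. Since $F$, $G$, and $H$ are Fr\'echet-Schwartz, hence Montel, we have the canonical identifications $G \varepsilon E = L(G', E)$ and $H \varepsilon E = L(H', E)$, under which $Q \varepsilon \operatorname{id}_E$ takes the form of the pullback $R \mapsto R \circ Q^{t}$. As \eqref{eq:SESFrechet} is a topologically exact sequence of Fr\'echet spaces, the transposed sequence
\[ \SES{H'}{G'}{F'}{Q^t}{} \]
is an exact sequence of $(LB)$-spaces (the surjectivity of the second map is the Hahn-Banach theorem, while exactness in the middle follows from the closed range theorem). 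Consequently, $Q \varepsilon \operatorname{id}_E$ is surjective if and only if the extension map $L(G', E) \to L(H', E)$, $T \mapsto T \circ Q^{t}$, is surjective. Setting $Z = F'$, it thus suffices to show that $Z$ is weakly $E$-acyclic.

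Then I would match the hypotheses (a)--(d) with the cases of Theorem \ref{t:MainExtensionThm} and Corollary \ref{c:SpecificCasesAreLocallyEAcyclic} for the pair $(E, Z) = (E, F')$. Cases (a) and (b) carry over verbatim, being conditions on $E$. For (c), the standard Köthe duality $(\lambda_{0}(B))' = k^{1}(A)$ with $a_{n} = 1/b_{n}$ shows that $Z = F' \cong k^{1}(A)$, matching case (c). For (d), the strong dual of a nuclear Fr\'echet space is a nuclear $(LB)$-space, hence an $(LN)$-space, so $Z = F'$ is an $(LN)$-space, matching case (d).

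Finally, since every Fr\'echet-Schwartz space is quasinormable, $F'$ is acyclic. Together with the hypothesis that $(E, F')$ satisfies $(\condWS)$, Proposition \ref{l:S=WS+acyclic} yields that $(E, F')$ satisfies $(\condS)$. In the cases where $E$ is non-normable or infinite-dimensional, Theorem \ref{t:MainExtensionThm} then gives directly that $F'$ is weakly $E$-acyclic. The degenerate endpoints of (a) and (b) --- where $E$ is normable, so that $E \cong \ell^{\infty}(I)$ for some index set $I$ --- are instead covered by Proposition \ref{t:MainExtensionThm2}(i), since the acyclicity of $F'$ already entails that $F'$ is weakly $\ell^{\infty}(I)$-acyclic. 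Hence $F'$ is weakly $E$-acyclic in all cases, and the surjectivity of $Q \varepsilon \operatorname{id}_E$ follows. I expect the one point requiring care to be not any individual estimate (these are subsumed in the already-established Theorem \ref{t:MainExtensionThm}), but rather the bookkeeping of the duality reduction together with the need to invoke Proposition \ref{t:MainExtensionThm2} to patch the normable endpoints left open by Theorem \ref{t:MainExtensionThm}.
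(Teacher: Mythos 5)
Your proposal is correct and follows essentially the same route as the paper: dualize \eqref{eq:SESFrechet} to reduce to weak $E$-acyclicity of $F'$, upgrade $(\condWS)$ to $(\condS)$ via the acyclicity of $F'$ (quasinormability of Fr\'echet-Schwartz spaces) and Proposition \ref{l:S=WS+acyclic}, and then invoke Theorem \ref{t:MainExtensionThm} together with Proposition \ref{t:MainExtensionThm2} for the normable/finite-dimensional endpoints of (a) and (b). Your explicit matching of cases (c) and (d) through K\"othe duality $(\lambda_0(B))' \cong k^1(A)$ and nuclearity of $F'$, and the endpoint patching, are exactly the steps the paper compresses into its final sentence.
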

	
	\begin{proof} 
		Since $G$ and $H$ are Montel, we have that $G \varepsilon E = L(G', E)$ and $H \varepsilon E = L(H', E)$. Hence, $Q\varepsilon \operatorname{id}_E = (Q^t)_*: L(G', E) \to L(H',E)$.
		As \eqref{eq:SESFrechet} is an exact sequence of distinguished  Fr\'echet spaces, its dual sequence		
		\[ \SES{H'}{G'}{F'}{Q^{t}}{}\]
		consists of $(LB)$-spaces and is exact. 
		Therefore, it suffices to prove that $F'$ is weakly $E$-acyclic. Note that  $(E, F')$ satisfies $(\condS)$ as $F'$ is acyclic (Proposition \ref{l:S=WS+acyclic}). The result therefore follows from Theorem  \ref{t:MainExtensionThm} and Proposition \ref{t:MainExtensionThm2}.	\end{proof}

\begin{definition}
Let $F$ be a Fr\'echet space with a fundamental decreasing sequence $(U_{n})_{n \in \N}$ of absolutely convex neighbourhoods of $0$. Then, $F$ is said to satisfy $(\Omega)$ \cite{M-V-IntroFuncAnal}  if the inductive spectrum of Banach spaces  $((F^{\prime})_{U^{\circ}_{n}})_{n\in\N}$  satisfies $(\DOmega)$. 
\end{definition}
\noindent The condition $(\Omega)$ does not depend on the choice of the fundamental decreasing sequence $(U_{n})_{n \in \N}$ of absolutely convex neighbourhoods of $0$ in $F$.
Lemma \ref{t:A+Omega=E} and Theorem \ref{t:SESExtensionFrechet}  yield the following result.	
		
		\begin{theorem}\label{t:OmegaA}
		Let $E$ be a locally complete lcHs with a fundamental sequence of bounded sets and let \eqref{eq:SESFrechet}  be an exact sequence of Fr\'echet-Schwartz spaces. Suppose that one of the assumptions (a)-(d) from Theorem \ref{t:SESExtensionFrechet} is satisfied. If $E$ satisfies ($\condA$) and $F$ satisfies  $(\Omega)$, then the map $Q\varepsilon \operatorname{id}_E: G \varepsilon E \to H\varepsilon E$ is surjective.
	\end{theorem}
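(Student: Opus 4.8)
The plan is to derive this theorem as an immediate consequence of the two results cited just before it, namely Lemma \ref{t:A+Omega=E} and Theorem \ref{t:SESExtensionFrechet}; the substantive content has already been packaged into those statements, so the work here is essentially bookkeeping with the relevant definitions.

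First I would recall that a Fréchet–Schwartz space is Montel and quasinormable, hence distinguished, so that $F' = \Ind ((F')_{U_n^{\circ}})_{n \in \N}$ is an $(LB)$-space and the canonical inductive spectrum of Banach spaces $\mathscr{Z} = ((F')_{U_n^{\circ}})_{n \in \N}$ is exactly the spectrum used to test the conditions $(\condS)$, $(\condWS)$, and $(\DOmega)$ on $F'$. By the very definition of the condition $(\Omega)$, the hypothesis that $F$ satisfies $(\Omega)$ means precisely that this spectrum $\mathscr{Z}$ satisfies $(\DOmega)$, i.e.\ that $F'$ satisfies $(\DOmega)$.

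With this reformulation in hand, I would apply Lemma \ref{t:A+Omega=E} to the pair $(E, \mathscr{Z})$: since $E$ satisfies $(\condA)$ and $\mathscr{Z}$ satisfies $(\DOmega)$, the pair $(E, F')$ satisfies $(\condS)$. The inclusion appearing in $(\condS)$ (which carries an $\varepsilon > 0$ term) trivially implies the inclusion in $(\condWS)$, by absorbing $\varepsilon$ into the constant $C$, so $(E, F')$ satisfies $(\condWS)$ as well. Since one of the assumptions (a)--(d) of Theorem \ref{t:SESExtensionFrechet} holds by hypothesis, that theorem then yields that $Q \varepsilon \id_E : G \varepsilon E \to H \varepsilon E$ is surjective, which is the desired conclusion.

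There is no genuine obstacle in this final step: the hard analysis lies upstream, in the interpolation-type argument behind Lemma \ref{t:A+Omega=E} (deducing $(\condS)$ from $(\condA)$ and $(\DOmega)$) and in Theorem \ref{t:SESExtensionFrechet} (reducing surjectivity of the tensorized map to weak $E$-acyclicity of $F'$ via the Montel identifications $G \varepsilon E = L(G',E)$ and $H \varepsilon E = L(H',E)$, the exactness of the dualized sequence of $(LB)$-spaces, and the $\Proj^{1}$-machinery). The only point demanding care is the correct matching of definitions, namely reading $(\Omega)$ for the Fréchet space $F$ as $(\DOmega)$ for the canonical spectrum defining the $(LB)$-space $F'$; once this identification is in place, the conclusion follows mechanically.
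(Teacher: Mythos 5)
Your proposal is correct and coincides with the paper's own (one-line) proof, which derives the theorem by exactly this combination: Lemma \ref{t:A+Omega=E} gives $(\condS)$ for $(E,F')$, hence trivially $(\condWS)$, and Theorem \ref{t:SESExtensionFrechet} then yields the surjectivity of $Q\varepsilon\operatorname{id}_E$. Your bookkeeping — reading $(\Omega)$ for $F$ as $(\DOmega)$ for the canonical spectrum $((F')_{U_n^{\circ}})_{n\in\N}$ defining the $(LB)$-space $F'$, which is legitimate since the conditions are invariant under equivalence of spectra — is precisely the intended matching of definitions.
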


	\section{Vector-valued Eidelheit families}\label{sect:Eidelheit}
Fix a countably infinite index set $J$. Let $G$ be a Fr\'echet space. Eidelheit  \cite{E-TheorieSystemeLinGleichungen} (see also \cite[Theorem 26.27]{M-V-IntroFuncAnal}) characterized the families  $(x'_{j})_{j \in J} \subseteq G'$ such that the infinite system of scalar-valued linear equations
$$
\langle x'_j,x \rangle  =a_j, \qquad j \in J,
$$
has a solution $x \in G$ for each $(a_j)_{j \in J} \in \C^J$. In such a case, the family $(x'_{j})_{j \in J}$ is called \emph{Eidelheit}. By definition, $(x'_{j})_{j \in J}$ is Eidelheit if and only if the map
	\[ Q = Q_{(x'_{j})_{j \in J}}: G \rightarrow \C^{J} , \,  x \mapsto (\ev{x'_{j}}{x})_{j \in J} \]
is surjective. Let $E$ be a lcHs. We define the \emph{associated $E$-valued family of  $(x'_{j})_{j \in J}$} as 
$$
(x'_{j} \varepsilon \id_E)_{j \in J} \subseteq L( G \varepsilon E, E),
$$
where we identified $\C \varepsilon E$ with $E$.
This family is called \emph{Eidelheit} if  the infinite system of $E$-valued  linear equations
$$
\langle x'_j \varepsilon \id_E,x \rangle  =e_j, \qquad j \in J,
$$
has a solution $x \in G\varepsilon E$ for each $(e_j)_{j \in J} \in E^J$. Note that $(x'_{j} \varepsilon \id_E)_{j \in J}$  is Eidelheit if and only if the map
	\[ Q \varepsilon \id_E: G \varepsilon E \rightarrow \C^{J} \varepsilon E \cong E^J \]
is surjective. 

We have the following natural problem: 
\emph{Let $(x'_{j})_{j \in J} \subseteq G^{\prime}$  be Eidleheit and let $E$ be a lcHs. Find
sufficient and necessary conditions such that the associated $E$-valued family of  $(x'_{j})_{j \in J}$ is also Eidelheit.}
	
	If $G$ is a nuclear Fr\'echet space, then for each Fr\'echet space $E$ the associated $E$-valued family of any Eidelheit family $(x'_{j})_{j \in J} \subseteq G^{\prime}$ is again Eidelheit: This follows
from the equality $G \varepsilon E = G \widehat{\otimes}_\pi E$ \cite[Proposition 1.4]{K-Ultradistributions3} and the fact that the $\pi$-tensor product of two surjective
continuous linear maps between Fr\'echet spaces is again surjective \cite[Proposition 43.9]{T-TopVecSpDistKern}. 

The goal of this section is to study the above problem if $E$ is a lcHs with a fundamental sequence of bounded sets. Since a closed subspace of a nuclear space is again nuclear, Theorem  \ref{t:OmegaA} directly yields the following result.
	\begin{proposition}\label{suff-Eidelheit}
        Let $G$ be a nuclear Fr\'echet space. Let $(x'_j)_{j \in J} \subseteq G'$ be an Eidelheit family such that 
	$$
	\ker Q = \{ x \in G \, | \, \langle x'_j, x \rangle = 0, \, \forall j \in J \} 
	$$
	satisfies $(\Omega)$. Let $E$ be a locally complete lcHs with a   fundamental sequence of bounded sets that satisfies $(\condA)$. Then, the associated $E$-valued  family of  $(x'_{j})_{j \in J}$ is also Eidelheit.
	\end{proposition}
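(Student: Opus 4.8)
The plan is to exhibit the surjectivity of the Eidelheit map as a direct instance of Theorem \ref{t:OmegaA}. By the characterization recorded in the discussion preceding the statement, the associated $E$-valued family of $(x'_j)_{j\in J}$ is Eidelheit exactly when the tensorized map $Q\varepsilon\id_E: G\varepsilon E \to \C^J\varepsilon E \cong E^J$ is surjective, where $Q = Q_{(x'_j)_{j\in J}}: G \to \C^J$. So it suffices to embed $Q$ into a short exact sequence of Fr\'echet-Schwartz spaces to which Theorem \ref{t:OmegaA} applies.

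First I would set $F = \ker Q$ and consider the sequence
\[ \SES{F}{G}{\C^{J}}{}{Q}. \]
Since $(x'_j)_{j\in J}$ is Eidelheit, $Q$ is surjective, so this sequence is algebraically exact; as all three spaces are Fr\'echet, the open mapping theorem makes $Q$ open and the inclusion $F\hookrightarrow G$ a topological embedding, so the sequence is exact in the required sense. Next I would check that the three spaces are Fr\'echet-Schwartz. The space $G$ is nuclear Fr\'echet, hence a Fr\'echet-Schwartz space, since every nuclear space is Schwartz. As $J$ is countably infinite, $\C^J \cong \omega$ is a nuclear Fr\'echet space and thus Fr\'echet-Schwartz. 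Finally $F = \ker Q$, being a closed subspace of the nuclear space $G$, is again nuclear Fr\'echet, hence Fr\'echet-Schwartz; in particular $F$ is nuclear, so assumption (d) of Theorem \ref{t:SESExtensionFrechet} is satisfied.

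With the sequence in hand, the remaining hypotheses of Theorem \ref{t:OmegaA} are precisely the standing assumptions: $E$ satisfies $(\condA)$ and $F = \ker Q$ satisfies $(\Omega)$. Applying Theorem \ref{t:OmegaA} then yields the surjectivity of $Q\varepsilon\id_E: G\varepsilon E \to \C^J\varepsilon E$, which is exactly the assertion that the associated $E$-valued family is Eidelheit. There is essentially no obstacle here: the argument is a formal reduction, and the only points meriting a word of justification are the identification $\C^J\cong\omega$ as a nuclear Fr\'echet space and the stability of nuclearity under passage to the closed subspace $\ker Q$, both of which are standard.
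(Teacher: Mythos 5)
Your proposal is correct and matches the paper's own argument, which likewise deduces the result directly from Theorem \ref{t:OmegaA} applied to the exact sequence $0 \to \ker Q \to G \to \C^{J} \to 0$ of Fr\'echet-Schwartz spaces, using that $\ker Q$ is nuclear as a closed subspace of the nuclear space $G$ (case (d)). The verifications you spell out (openness of $Q$ via the open mapping theorem, $\C^{J} \cong \omega$ nuclear Fr\'echet) are exactly the routine points the paper leaves implicit.
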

	\begin{remark}
	In general $\ker Q$ does not satisfy $(\Omega)$ even if $G$ does so \cite{V-Mit}. We refer to  \cite{B-LinTopStructClosedIdealsFAlg,V-Kernel,V-K2} for various situations in which $\ker Q$ does inherit $(\Omega)$ from $G$.
	\end{remark}
	%
	Next, we focus on the necessity of the condition  $(\condA)$. The following result was shown by Vogt in the special case $G \cong \lambda^{1}(A)$ \cite[Lemma 4.4]{V-TensorFundDFRaumFortsetz}. We present here a more general version with an alternative proof. 
	
			\begin{proposition}
	\label{t:NecessityA}
			 Let $G$ be a Fr\'echet space satisfying $(\uDN)$ and let $(x'_j)_{j \in J} \subseteq G'$ be Eidelheit. Let $E$ be a lcHs with a fundamental sequence of bounded sets. If the associated $E$-valued  family of  $(x'_{j})_{j \in J}$ is Eidelheit, then
 $E$ satisfies $(\condA)$.
			 \end{proposition}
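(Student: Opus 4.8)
The plan is to unwind what the $E$-valued Eidelheit property says and to read off from it the dual reformulation of $(\condA)$ for the fundamental sequence of bounded sets $(B_{N})_{N\in\N}$ of $E$. Writing $\|e'\|^{*}_{N}=\sup_{e\in B_{N}}|\langle e,e'\rangle|$ for $e'\in E'$, taking polars in the defining inclusion of $(\condA)$ and optimising in $r$ shows that $(\condA)$ is equivalent to the interpolation estimate
\[
\exists N\ \forall M\ \forall\theta\in(0,1)\ \exists K,C:\quad \|e'\|^{*}_{M}\le C(\|e'\|^{*}_{N})^{\theta}(\|e'\|^{*}_{K})^{1-\theta},\qquad \forall e'\in E',
\]
with $\theta=1/(\nu+1)$. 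So it suffices to produce such an estimate. First I would reformulate the hypothesis: using $\C^{J}\varepsilon E\cong E^{J}$ and the fact that the transpose of $Q$ sends the $j$-th coordinate functional of $\C^{J}$ to $x'_{j}$, one has $(Q\varepsilon\id_{E})(R)=(R(x'_{j}))_{j\in J}$ for $R\in G\varepsilon E=L(G'_{c},E)$. Hence the associated family is Eidelheit if and only if for every $(e_{j})_{j\in J}\in E^{J}$ there is $R\in L(G'_{c},E)$ with $R(x'_{j})=e_{j}$; by symmetry of the $\varepsilon$-product this is equivalent to the existence of $S=R^{t}\in L(E'_{c},G)$ with $\langle x'_{j},S(e')\rangle=\langle e_{j},e'\rangle$ for all $j$ and all $e'\in E'$.

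Next I would extract the necessary inequality. Fix the dominating level $n$ from $(\uDN)$ of $G$; for $m\ge n$ pick $\theta\in(0,1)$ and $k\ge m$ as in $(\uDN)$. Continuity of $S$ yields, for $\ell\in\{n,k\}$, a compact absolutely convex $L_{\ell}\subseteq E$ with $\|S(e')\|_{\ell}\le\sup_{e\in L_{\ell}}|\langle e,e'\rangle|$, and since $L_{\ell}$ is bounded, $L_{\ell}\subseteq c_{\ell}B_{N_{\ell}}$, so $\|S(e')\|_{\ell}\le c_{\ell}\|e'\|^{*}_{N_{\ell}}$. Combining the $(\uDN)$ inequality for $S(e')$ with the elementary bound $|\langle e_{j},e'\rangle|=|\langle x'_{j},S(e')\rangle|\le\|x'_{j}\|^{*}_{m}\,\|S(e')\|_{m}$, where $\|x'\|^{*}_{m}=\sup\{|\langle x',x\rangle|:\|x\|_{m}\le1\}$ is the dual seminorm on $G'$, gives
\[
\sup_{j\in J}\frac{|\langle e_{j},e'\rangle|}{\|x'_{j}\|^{*}_{m}}\le\|S(e')\|_{m}\le C(\|e'\|^{*}_{N_{n}})^{\theta}(\|e'\|^{*}_{N_{k}})^{1-\theta},\qquad\forall e'\in E',
\]
in which the dominating bounded-set level $N_{n}$ is attached to the fixed level $n$. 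It then remains to feed in data $(e_{j})$ forcing the left-hand side to dominate a prescribed $\|e'\|^{*}_{M}$. Setting $e_{j}=\|x'_{j}\|^{*}_{m}f_{j}$ reduces this to $\sup_{j}|\langle f_{j},e'\rangle|\ge c\,\|e'\|^{*}_{M}$; I would choose the $f_{j}$ along an Eidelheit subfamily, using that $\|x'_{j}\|^{*}_{m}\to\infty$ at each level so that the normalisation is admissible and only countably many indices are needed, as a diagonal sequence of witnesses defeating the parameters $(K,C)$ in the negation of $(\condA)$. This produces $\|e'\|^{*}_{M}\le C(\|e'\|^{*}_{N_{n}})^{\theta}(\|e'\|^{*}_{N_{k}})^{1-\theta}$ for one exponent $\theta$, and the full range of exponents follows by an elementary interpolation/iteration argument on the $\|\cdot\|^{*}_{\bullet}$ (small exponents being free from the monotonicity $\|e'\|^{*}_{N}\le\|e'\|^{*}_{K}$, larger ones by composing the estimates obtained for different $m$), yielding exactly the displayed form of $(\condA)$.

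The hard part, I expect, is arranging a single countable data family $(e_{j})$ that simultaneously (i) keeps the dominating level $N_{n}$ independent of the target level $M$ — which is precisely what honours the leading $\exists N$ quantifier of $(\condA)$ — and (ii) norms or defeats the bounded set $B_{M}$ for arbitrary $M$, despite $B_{M}$ possibly being non-separable and $E$ non-metrisable. This is exactly where the Eidelheit structure (infinitely many functionals escaping every level, $\|x'_{j}\|^{*}_{m}\to\infty$) and the fixed dominating norm of $G$ supplied by $(\uDN)$ must be used in tandem. Vogt's special case $G\cong\lambda^{1}(A)$ sidesteps the difficulty by testing against coordinate functionals; the general argument replaces this with the diagonal choice of witnesses indexed over the countable parameter list coming from the negation of $(\condA)$.
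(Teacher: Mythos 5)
Your reduction to a solution operator and the idea of combining its continuity on two levels with the $(\uDN)$ inequality does parallel the paper's scheme (which works with $(\conduA)$ for $Y=G'$, equivalent to $(\uDN)$ for $G$ by Remark \ref{remark-DN}). But the three points you defer to the end are genuine gaps, and the first two are fatal to the approach as you designed it. (i) The norming step cannot be carried out: to force $\sup_{j}|\langle f_{j},e'\rangle|\ge c\,\|e'\|^{*}_{M}$ for all $e'\in E'$ you would need a countable subset of a multiple of $B_{M}$ whose closed absolutely convex hull contains a multiple of $B_{M}$ (bipolar theorem); such a hull is always separable, while $B_{M}$ need not be --- e.g.\ $E=\ell^{\infty}(I)\cong k^{\infty}(A)$ with $I$ uncountable, a case squarely inside the scope of the proposition. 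So any argument that tries to dominate the full dual seminorm $\|e'\|^{*}_{M}$ by countably many data vectors is doomed. (ii) In your displayed estimate the level $N_{n}$ arises from absorbing the compact set $L_{n}$ that controls $S$, and $S$ depends on the data $(e_{j})$, hence on the target level $M$; nothing makes $N_{n}$ uniform in $M$, so the leading $\exists N$ of $(\condA)$ is not obtained. (iii) The ``elementary iteration'' upgrading one exponent $\theta$ to the full range is invalid: composing the estimates gives $1-\theta'=\prod_{j}(1-\theta_{j})$, and since the exponents produced at higher levels may shrink, this product need not tend to $0$; this is exactly the difference between $(\conduA)$ and $(\condA)$, which are not equivalent in general, and your hypothesis on $G$ is only $(\uDN)$.

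The paper closes all three gaps with one device absent from your proposal: Vogt's pointwise characterization of $(\condA)$ (\cite[Lemma 3.1 and Bemerkung 3.1']{V-VektorDistrRandHolomorpherFunk}, quoted in the proof of Proposition \ref{p:NecessityA}), namely $E$ satisfies $(\condA)$ if and only if
\[
\exists N\in\N,\ \nu>0\ \forall e\in E\ \exists K\ge N,\ C>0\ \forall r>0\,:\ e\in rB_{K}+\frac{C}{r^{\nu}}B_{N}.
\]
Since this quantifies over single vectors $e$ (and a single $\nu$ suffices), its negation yields a countable family of witness vectors $(e_{n})$, one for each pair $(n,\nu(n))$ supplied by $(\conduA)$ of $Y=G'$; no norming family is needed, so non-separability of $E$ is irrelevant. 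The paper then works directly with one operator $T\in G\varepsilon E\subseteq L(G',E)$ solving $T(x'_{n})=e_{n}$ (no transpose, no dual seminorms on $E$ at all), chooses $N_{0}$ and $K$ \emph{after} $T$ so that $T(D_{n_{0}})\subseteq B_{N_{0}}$ and $T(D_{k(N_{0})})\subseteq B_{K}$, and splits $x'_{N_{0}}$ by $(\conduA)$ to conclude $e_{N_{0}}\in rB_{K}+C(N_{0})r^{-\nu(N_{0})}B_{N_{0}}$ for every $r>0$, contradicting the witness property of $e_{N_{0}}$. Choosing the levels after the operator is precisely the diagonalization you gesture at in your last paragraph; without the pointwise lemma --- which also silently performs the $\exists\nu\Rightarrow\forall\nu$ upgrade of (iii) --- there is no way to execute it.
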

		
		\noindent In view of Remark \ref{remark-DN} and the fact that $G \varepsilon E = L(G'_c, E) \subseteq L(G',E)$, Proposition \ref{t:NecessityA} is a consequence of the following result.

		\begin{proposition}
		\label{p:NecessityA}
		Let $E$ and $Y$ be two lcHs with a fundamental sequence of bounded sets and assume that $Y$ satisfies ($\conduA$). Suppose that there exists a sequence $(y_j)_{j \in J} \subseteq Y$ such that for each sequence $(e_j)_{j \in J} \in E^J$ there is $T \in L(Y,E)$  such that $T(y_j) = e_j$ for all $j \in J$. Then, $E$ satisfies $(\condA)$.
\end{proposition}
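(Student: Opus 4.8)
The plan is to argue directly with the defining inequality of $(\condA)$, moving freely between bounded sets and their polars, and to feed the hypothesis into the argument through two distinct channels: a \emph{uniform} solvability estimate extracted from a category argument, and the interpolation structure of $Y$ encoded in $(\conduA)$. Fix fundamental increasing sequences of absolutely convex closed bounded sets $(B_N)_{N\in\N}$ in $E$ and $(D_n)_{n\in\N}$ in $Y$, and let $N_0$ be the index furnished by $(\conduA)$ for $Y$. The hypothesis is exactly that $R\colon L(Y,E)\to E^{J}$, $T\mapsto (T(y_j))_{j\in J}$, is surjective. A first, cheap consequence (solving data supported in one coordinate, $e_i=\delta_{ij}e$ for a fixed $e\in E$ and then composing with some $e'\in E'$, $\langle e',e\rangle=1$) is a biorthogonal system $(\eta_j)_{j\in J}\subseteq Y'$ with $\langle \eta_j,y_i\rangle=\delta_{ij}$; these functionals will let me realise a prescribed assignment $(e_j)$ through weighted combinations of the fixed vectors $y_j$.

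The first genuine step is to upgrade the pointwise surjectivity of $R$ to a quantitative statement with \emph{constants independent of the datum}. Concretely, I would fix a bounded set $B_N$, restrict attention to data valued in the normed space $E_{B_N}$, and observe that the corresponding space of (suitably normalised) $E_{B_N}$-valued sequences on the countable set $J$ is complete; restricted to this space $R$ is still onto the $E_{B_N}$-valued data by hypothesis. A Baire category / open mapping argument (carried out in the webbed, ultrabornological setting where De Wilde's theorem and Grothendieck's factorization theorem are available) should then yield an index $p$ in $Y$ and, for each level $\ell$ in $Y$, an index $Q(\ell)$ and a constant $C(\ell)$ in $E$ such that every admissible datum $(e_j)$ is solved by some $T\in L(Y,E)$ with $T(D_\ell)\subseteq C(\ell)B_{Q(\ell)}$ uniformly. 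I expect this to be the principal technical obstacle: $E$ is not assumed locally complete, so one must be careful about the complete space in which the category argument is run (the $E_{B_N}$-valued sequence space rather than $E^J$, which need not be Baire), and one must keep the output indices $Q(\ell)$—in particular the base index attached to $N_0$—independent of all the remaining quantifiers.

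With uniform bounds in hand I would transport the interpolation structure of $Y$ to $E$. For a single vector $y_{j_0}\in\rho D_{M'}$, applying $(\conduA)$ and pushing the decomposition $y_{j_0}=r u+C'r^{-\nu'}v$ through a uniformly bounded solving operator with $T(y_{j_0})=e$ already gives, for every $e\in B_M$,
\[
B_M \subseteq r\,B_{K} + C\,r^{-\nu'}B_{N},\qquad \forall r>0,
\]
with $N$ tied to $N_0$ and $K$ tied to the level produced by $(\conduA)$; this is only the weak, \emph{fixed-exponent} version of the desired inequality. The crux is to upgrade the fixed exponent $\nu'$ coming from $Y$ to the \emph{arbitrary} exponent $\nu$ demanded by $(\condA)$, and this is precisely where the infinitude of $J$ becomes indispensable, since the exponent of $(\conduA)$ can never be boosted inside $Y$ alone—that gap is exactly what separates $(\conduA)$ from $(\condA)$. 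Here I would, instead of a single slot, route $e$ through a geometrically weighted combination $\sum_i\lambda_i y_{j_i}$ of infinitely many of the $y_j$, apply $(\conduA)$ at radii in geometric progression, and sum the resulting pushed-forward decompositions, so that the accumulated estimate has an effective exponent as large as prescribed, at the price of descending to a deeper level $K=K(\nu)$ in $E$ while keeping the base level $N$ fixed. I anticipate this summation being the most delicate computation of the proof, because the boost must be manufactured externally, by the coordinatewise solvability over the infinite index set, rather than from any single application of the $Y$-estimate.

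Finally, I would assemble these decompositions over all $M\geq N$ and all $\nu>0$, with $N$ kept independent of $M$ (this is guaranteed by anchoring everything at the fixed base $N_0$ of $(\conduA)$), which is exactly the quantifier pattern of $(\condA)$. Thus $E$ satisfies $(\condA)$, as claimed. The two steps I would watch most carefully are the category argument producing uniform solving operators without assuming completeness of $E$, and the geometric summation that converts the single exponent of $(\conduA)$ into the full range required by $(\condA)$; the remainder is bookkeeping with Minkowski gauges and polars.
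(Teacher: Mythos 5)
There is a genuine gap: both of the steps you yourself flag as the main obstacles are exactly where the plan breaks down, and the second one is unsound as described. Your ``exponent boost'' cannot be manufactured the way you propose. Feeding $T(y_{j_i})=\mu_i e$ through the $(\conduA)$-decomposition of $y_{j_i}$ and rescaling the radius, $\rho = r/\mu_i$, gives an error term $C_i\mu_i^{-1-\nu_i}\rho^{-\nu_i}$: the weights $\mu_i$ only shrink the \emph{constant}, never raise the \emph{exponent}. If instead you patch different $y_{j_i}$ over dyadic blocks of radii, the output index escapes: $(\conduA)$ attaches a level $k(j_i)$ in $Y$ to each $y_{j_i}$, so the good part of the decomposition lands in $T(D_{k(j_i)})\subseteq C_iB_{K_i}$ with $K_i\to\infty$, and $(\condA)$ requires a \emph{single} $K$ valid for all $r>0$. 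Finally, a genuine infinite combination $\sum_i\lambda_i y_{j_i}$ need not even exist in $Y$, which is not assumed (locally) complete---and even if it did, applying $(\conduA)$ to that single vector returns one fixed exponent again. The boost you are trying to build by hand is precisely the content of the reformulation the paper quotes from Vogt \cite[Lemma 3.1 and Bemerkung 3.1']{V-VektorDistrRandHolomorpherFunk}: $E$ satisfies $(\condA)$ if and only if $\exists N\in\N,\nu>0~\forall e\in E~\exists K\geq N, C>0~\forall r>0: e\in rB_K+\frac{C}{r^{\nu}}B_N$, a \emph{vectorwise} condition with one fixed exponent. Once this is available, no boosting is needed inside the proof at all.

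The Baire category step is likewise both unestablished and unnecessary. As sketched it faces real obstructions: $E_{B_N}$ is complete only when $B_N$ is a Banach disk; the sets of data solvable with a prescribed bound are defined by an existential condition and there is no visible reason they are closed in your sequence space; and $L(Y,E)$ carries no evident web, so De Wilde's theorems do not apply to $R$. The paper obtains all the uniformity it needs from a single solving operator via a diagonal contraposition: negating the vectorwise condition above, one chooses for each $n\geq n_0$ a bad vector $e_n$ (taking base level $N=n$ and exponent $\nu=\nu(n)$, where $\nu(n)$, $k(n)$, $C(n)$ come from applying $(\conduA)$ to $y_n$ with the fixed anchor $D_{n_0}$), then takes \emph{one} $T\in L(Y,E)$ with $T(y_n)=e_n$ for all $n$. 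Continuity of this single $T$ yields $N_0\geq n_0$ and $K$ with $T(D_{n_0})\subseteq B_{N_0}$ and $T(D_{k(N_0)})\subseteq B_K$, and pushing the decomposition $y_{N_0}=v_{N_0,r}+w_{N_0,r}$ through $T$ gives $e_{N_0}\in rB_K+\frac{C(N_0)}{r^{\nu(N_0)}}B_{N_0}$ for all $r>0$, contradicting the badness of $e_{N_0}$. Note that the base level $N_0$ is read off \emph{after} fixing $T$, which is exactly what makes category methods (and your biorthogonal system, which the paper never needs) superfluous. So the proposal is not a proof: its first pillar is unverified and its second, as designed, cannot deliver the required quantifier pattern of $(\condA)$.
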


\begin{proof} We may suppose that $J = \N$.
Let $(D_{n})_{n \in \N}$ be a fundamental increasing sequence of bounded sets in $Y$. Since $Y$ satisfies $(\conduA)$, there is $n_0 \in \N$ such that for all $n \geq n_0$
\[
			\begin{gathered}
				\exists k(n) \geq n , \nu(n), C(n) > 0 ~ \forall r > 0 \, \exists v_{n,r} \in r D_{k(n)},  w_{n,r} \in \frac{C(n)}{r^{\nu(n)}} D_{n_0} \, : \, y_n = v_{n,r} + w_{n,r}.
				\end{gathered}
\]
Let $(B_{N})_{N \in \N}$ be a fundamental increasing sequence of bounded sets in $E$ such that each $B_N$ is a Banach disk. The fact that $E$ satisfies $(\condA)$ is equivalent to (cf.\ \cite[Lemma 3.1 and Bemerkung 3.1']{V-VektorDistrRandHolomorpherFunk}) 
			$$
				\exists N \in \N, \nu > 0 ~ \forall e \in E ~ \exists K \geq N, C > 0 ~ \forall r > 0 \, : \,  e \in  r B_K + \frac{C}{r^{\nu}} B_{N}. 
			$$ 		
Suppose now that $E$ does not satisfy $(\condA)$.  Then, by the above, there is a sequence $(e_n)_{n \geq n_0} \subseteq E$ such that 
	\begin{equation} 
				\label{eq:AlternativeAcor}
\forall n \geq n_0 ~ \forall K \geq n, C > 0 ~ \exists r > 0 \, : \,  e_n \notin  r B_K + \frac{C}{r^{\nu(n)}} B_{n}. 
			\end{equation} 
Let $T \in L(Y,E)$ be such that $T(y_n) = e_n$ for all $n \geq n_0$. Pick $N_0 \geq n_0$ and $K \geq N_0$ such that $T(D_{n_0}) \subseteq B_{N_0}$ and $T(D_{k(N_0)}) \subseteq B_K$. Then, for all $r >0$
$$
e_{N_0} = T(y_{N_0}) = T(v_{N_0,r}) + T(w_{N_0,r}) \in rB_K + \frac{C({N_0})}{r^{\nu({N_0})}} B_{N_0},
$$
which contradicts \eqref{eq:AlternativeAcor}.
\end{proof}

\begin{remark}
Set $\C^{(J)} = \bigoplus_{J} \C$. By using the canonical isomorphism $L(\C^{(J)}, E) \cong E^{J}$, $E$ a lcHs, Proposition \ref{t:NecessityA}  may be reformulated as follows: \emph{Let $E$ and $Y$ be two lcHs with a fundamental sequence of bounded sets and assume that $Y$ satisfies  ($\conduA$).  Let $\iota : \C^{(J)} \to Y$ be a continuous  linear map. If $\iota_*: L(Y, E) \to L(\C^{(J)} ,E)$ is surjective, then $E$ satisfies $(\condA)$.} Hence,  Proposition \ref{t:NecessityA}  may be interpreted as a partial  converse to Theorem \ref{cor:AO}.
\end{remark}

By combining Propositions \ref{suff-Eidelheit} and \ref{t:NecessityA}, we obtain the following result.
 \begin{theorem}
	\label{t:Eid}
	Let $G$ be a nuclear Fr\'echet space satisfying $(\uDN)$. Let $(x'_j)_{j \in J} \subseteq G'$ be an Eidelheit family such that 
	$$
	\ker Q = \{ x \in G \, | \, \langle x'_j, x \rangle = 0, \, \forall j \in J \} 
	$$
	satisfies $(\Omega)$. Let $E$ be a locally complete lcHs with a   fundamental sequence of bounded sets. Then, the associated $E$-valued  family of  $(x'_{j})_{j \in J}$ is Eidelheit if and only if $E$ satisfies $(A)$.
			 \end{theorem}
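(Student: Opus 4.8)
The plan is to prove the two implications of the equivalence separately, each time invoking one of the two propositions that immediately precede the statement. The hypotheses of Theorem \ref{t:Eid} have been assembled precisely so that they supply exactly what each of these propositions demands, so the bulk of the work is already done and the theorem is their synthesis.

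For the sufficiency, I would assume that $E$ satisfies $(\condA)$ and appeal directly to Proposition \ref{suff-Eidelheit}. The standing hypotheses that $G$ is a nuclear Fr\'echet space, that $(x'_j)_{j\in J}$ is an Eidelheit family, and that $\ker Q$ satisfies $(\Omega)$ are exactly its requirements on $G$ and on the family, while the assumption that $E$ is a locally complete lcHs with a fundamental sequence of bounded sets satisfying $(\condA)$ matches its requirement on $E$. Proposition \ref{suff-Eidelheit} then yields that the associated $E$-valued family of $(x'_j)_{j\in J}$ is Eidelheit.

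For the necessity, I would assume conversely that the associated $E$-valued family is Eidelheit and invoke Proposition \ref{t:NecessityA}. That result requires $G$ to be a Fr\'echet space satisfying $(\uDN)$---which is part of our hypotheses---together with the family $(x'_j)_{j\in J}$ being Eidelheit and $E$ being a lcHs with a fundamental sequence of bounded sets, all of which hold. Its conclusion is precisely that $E$ satisfies $(\condA)$. Combining the two directions gives the stated equivalence.

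I expect essentially no obstacle at the level of this theorem itself: the analytic content lives in Propositions \ref{suff-Eidelheit} and \ref{t:NecessityA} (which rest in turn on the tensorization machinery of Section \ref{sec:ExtSESFrechet} and Theorem \ref{t:OmegaA}, and on the $(\conduA)$-based separation argument, respectively). The only thing to keep straight is the bookkeeping of which hypothesis feeds which direction: nuclearity of $G$, the $(\Omega)$ condition on $\ker Q$, and the local completeness of $E$ are used only for sufficiency, whereas $(\uDN)$ for $G$ is used only for necessity. Since $G$ nuclear together with $(\uDN)$ makes both sets of assumptions available simultaneously, the equivalence follows with no further estimates.
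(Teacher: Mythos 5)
Your proposal is correct and matches the paper's proof exactly: the paper also obtains Theorem \ref{t:Eid} by combining Proposition \ref{suff-Eidelheit} (for sufficiency of $(\condA)$) with Proposition \ref{t:NecessityA} (for necessity), and your bookkeeping of which hypotheses feed which direction is accurate.
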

			 
	Let $E$ be a locally complete lcHs with a   fundamental sequence of bounded sets. Theorem \ref{t:Eid} may be used to characterize the validity of the $E$-valued version of many classical problems in analysis by the condition $(\condA)$ on $E$. We present two examples: E. Borel's theorem and interpolation by holomorphic functions. We denote by $C^\infty(\R^d;E)$ the space of $E$-valued smooth functions on $\R^d$. We set   $C^\infty(\R^d) = C^\infty(\R^d;\C)$.
			 
			 \begin{example}
			 \label{ex:vvBorel}
			\emph{ Let $E$ be a locally complete lcHs with a fundamental sequence of bounded sets. Then,  the following statements are equivalent:
			\begin{itemize}
			\item[(i)] For each sequence $(e_\alpha)_{\alpha \in \N^d} \subseteq E$ there is $f \in C^\infty(\R^d;E)$  such that $f^{(\alpha)}(0) = e_\alpha$ for all $\alpha \in \N^d$.
			\item[(ii)]  $E$ satisfies $(A)$.
			\end{itemize}}
			 \end{example}
			 \begin{proof}
			 Set $L = [-1,1]^d$ and let $\mathcal{D}_L(E)$ be the space consisting of all $f \in  C^\infty(\R^d;E)$ with $\operatorname{supp} f \subseteq L$.  We set $\mathcal{D}_L = \mathcal{D}_L(\C)$. By multiplying with a suitable cut-off function, one sees that it suffices to show that $E$ satisfies $(\condA)$ if and only if for each sequence $(e_\alpha)_{\alpha \in \N^d} \subseteq E$ there is $f \in \mathcal{D}_L(E)$  such that $f^{(\alpha)}(0) = e_\alpha$ for all $\alpha \in \N^d$. We endow  $C^\infty(\R^d)$ and  $\mathcal{D}_L$ with their natural Fr\'echet space topology.  For $\alpha \in \N^d$ we define $x'_\alpha = (-1)^{|\alpha|}\delta^{(\alpha)} \in  \mathcal{D}'_L$, i.e., $\langle x'_\alpha, f \rangle = f^{(\alpha)}(0)$, $f \in \mathcal{D}_L$. By E. Borel's theorem (see e.g.\ \cite[Theorem 26.29]{M-V-IntroFuncAnal} for the case $d =1$),  $(x'_{\alpha})_{\alpha \in \N^d} \subseteq  \mathcal{D}'_L$ is Eidelheit. Since $E$ is locally complete,  the spaces   $C^\infty(\R^d;E)$ and   $C^\infty(\R^d) \varepsilon E$ are canonically isomorphic via the map  (cf.\ \cite{BFJ})
			 $$
			 \Phi: C^\infty(\R^d;E) \to C^\infty(\R^d) \varepsilon E \cong L(E'_c,C^\infty(\R^d)), \, f \mapsto (e' \mapsto \langle e', f( \,\cdot\,) \rangle).
			 $$
			 Consequently, 	$\Phi: \mathcal{D}_L(E) \to \mathcal{D}_L \varepsilon E$ is an isomorphism as well. Note that, under this isomorphism, 	$x'_\alpha \varepsilon \operatorname{id}_E(f) = f^{(\alpha)}(0)$, $f \in \mathcal{D}_L(E)$. Hence, it suffices to show that  the associated $E$-valued  sequence of  $(x'_{\alpha})_{\alpha \in \N^d}$ is Eidelheit if and only if $E$ satisfies $(A)$.  We have that $\mathcal{D}_L$ is isomorphic to the Fr\'echet space of rapidly decreasing sequences $s$ (see e.g. \cite[p.\ 363]{M-V-IntroFuncAnal} for the case $d =1$). Hence, $\mathcal{D}_L$  is nuclear and satisfies $(\DN)$ (and thus $(\uDN)$). Furthermore, 
			 $$
	\ker Q = \{ f\in \mathcal{D}_L \, | \,   f^{(\alpha)}(0)= 0, \, \forall \alpha \in \N^d \} 
	$$
	satisfies $(\Omega)$ by \cite[Satz 2.2]{Tidten}. The result now follows from Theorem \ref{t:Eid}.			 
			 \end{proof}
			  Given $U \subseteq \C$ open and a locally complete lcHs $E$, we denote by $\mathcal{O}(U;E)$ the space of $E$-valued holomorphic functions on $U$.
			  We set   $\mathcal{O}(U) = \mathcal{O}(U;\C)$.

			  \begin{example}
			  \label{ex:vvInterpol}
			\emph{ Let $E$ be a locally complete lcHs with a fundamental sequence of bounded sets. Let $U \subseteq \C$ be open and connected. Let $(u_n)_{n \in \N} \subseteq U$ be a sequence without accumulation points in $U$ and let $(k_n)_{n \in \N}$ be a sequence of natural numbers.
			\begin{itemize}
			\item[(i)] For each family $(e_{n,k})_{n \in \N,  k \leq k_n} \subseteq E$ there is $f \in \mathcal{O}(U;E)$  such that $f^{(k)}(u_n) = e_{n,k}$ for all $n \in \N$ and $k = 0, \ldots, k_n$.
			\item[(ii)]  $E$ satisfies $(A)$.
			\end{itemize}}
			 \end{example}
\begin{proof}
We endow $\mathcal{O}(U)$ with its natural Fr\'echet space topology.  For $n \in \N$ and $k \leq k_n$ we define $x'_{n,k} = (-1)^{k}\delta^{(k)}(\, \cdot \, - u_n) \in  \mathcal{O}'(U)$, i.e., $\langle x'_{n,k}, f \rangle = f^{(k)}(u_n)$, $f \in \mathcal{O}(U)$. By 
a standard interpolation result for holomorphic functions \cite[Theorem 15.13]{Rudin},  $(x'_{n,k})_{n \in \N,  k \leq k_n}\subseteq  \mathcal{O}'(U)$ is Eidelheit.
Since $E$ is locally complete,  the spaces   $ \mathcal{O}(U;E)$ and   $ \mathcal{O}(U)\varepsilon E$ are canonically isomorphic via the map (cf.\ \cite{BFJ})
			 $$
			\mathcal{O}(U;E) \to \mathcal{O}(U)\varepsilon E  \cong L(E'_c,\mathcal{O}(U)), \, f \mapsto (e' \mapsto \langle e', f( \,\cdot\,) \rangle).
			 $$
			 Note that, under this isomorphism, 	$x'_{n,k} \varepsilon \operatorname{id}_E(f) = f^{(k)}(u_n)$, $f \in \mathcal{O}(U;E)$. Hence, it suffices to show that the associated $E$-valued  sequence of    $(x'_{n,k})_{n \in \N,  k \leq k_n}$ is Eidelheit if and only if $E$ satisfies $(A)$.  We have that $\mathcal{O}(U)$ is a nuclear Fr\'echet space that satisfies  $(\uDN)$ \cite[Satz 5.1]{V-fin} ($U$ is connected). Furthermore, 
			 $$
	\ker Q = \{ f\in \mathcal{O}(U) \, | \,   f^{(k)}(u_n)= 0, \, \forall n \in \N, k = 0, \ldots, k_n \} 
	$$
	is isomorphic to $\mathcal{O}(U)$ via the map
	$$
	\mathcal{O}(U) \to \ker Q, \, f \to gf
	$$ 
	where $g \in \mathcal{O}(U)$ is such that $g$ has a zero of order $k_n$ at $u _n$ for each $n \in \N$ and $g$ has no other zeroes \cite[Theorems 15.9 and 15.11]{Rudin}.
	 Consequently,  	$\ker Q$ satisfies $(\Omega)$ as  $\mathcal{O}(U)$ does so \cite[Corollary 4.3(a)]{Petzsche}. The result now follows from Theorem \ref{t:Eid}.			 

\end{proof}


	\begin{remark}
	We refer to \cite{B-D-V-InterpolVVRealAnalFunc} for results concerning $E$-valued interpolation by real analytic functions on open subsets of $\R$, where $E$ is a sequentially complete $(DF)$-space $E$. Since the space of real analytic functions on an open subset of $\R$ is a $(PLS)$-space (and not a Fr\'echet space), this problem is much more difficult and deeper than Example   \ref{ex:vvInterpol} (in this regard, see also \cite{D-V-InfSystLinEqRealAnFunc}).	
	\end{remark}

\end{document}